\newtheorem{theorem}{Theorem}[section]
\newtheorem{corollary}{Corollary}[section]
\newtheorem{lemma}{Lemma}[section]
\newtheorem{proposition}{Proposition}[section]
\theoremstyle{definition}
\theoremstyle{remark}
\newtheorem{remark}{Remark}[section]
\numberwithin{equation}{section}
\newcommand{\bbr}{\mathbb{R}}
\newcommand{\bbrn}{\mathbb{R}^N}
\newcommand{\bbn}{\mathbb{N}}
\newcommand{\ve}{\varepsilon}
\newcommand{\N}{{\mathbb N}}
\newcommand{\R}{{\mathbb R}}
\newcommand{\D}{\mathcal{D}}
\newcommand{\eps}{{\varepsilon}}
\newcommand{\C}{{\mathscr{C}}}
\renewcommand{\O}{\mathcal{O}}
\begin{document}
\title[Ground states of a nonlocal variational problem]{Ground states of a nonlocal variational problem \\and Thomas--Fermi limit for the Choquard equation}

\author{Damiano Greco}
\address{Department of Mathematics\\ Swansea University\\ Fabian Way\\
	Swansea SA1~8EN\\ Wales, United Kingdom}	
\email{2037214@Swansea.ac.uk}

\author{Yanghong Huang}
\address{Department of Mathematics\\ University of Manchester\\ Oxford Rd\\ Manchester M13 9PL\\ United Kingdom}
\email{yanghong.huang@manchester.ac.uk} 

\author{Zeng Liu}
\address{Department of Mathematics\\ Suzhou University of Science and Technology\\ Suzhou 215009\\ P.R. China}
\email{zliu@mail.usts.edu.cn}

\author{Vitaly Moroz}
\address{Department of Mathematics\\ Swansea University\\ Fabian Way\\
	Swansea SA1~8EN\\ Wales, United Kingdom}	
\email{v.moroz@swansea.ac.uk}

\date{\today}

\keywords{Interpolation inequalities, Riesz potential, free boundary, Choquard equation}

\subjclass[2010]{35B09; 35B33; 35J20; 35J60.}

\begin{abstract}
We study nonnegative optimizers of a Gagliardo--Nirenberg type inequality 
$$\iint_{\mathbb{R}^N \times \mathbb{R}^N} \frac{|u(x)|^p\,|u(y)|^p}{|x - y|^{N-\alpha}} dx\, dy\le C\Big(\int_{{\mathbb R}^N}|u|^2 dx\Big)^{p\theta} \Big(\int_{{\mathbb R}^N}|u|^q dx\Big)^{2p(1-\theta)/q},$$ 
that involves the nonlocal Riesz energy with $0<\alpha<N$, $p>\frac{N+\alpha}{N}$, $q>\frac{2Np}{N+\alpha}$ and $\theta=\frac{(N+\alpha)q-2Np}{Np(q-2)}$. For $p=2$, the equivalent problem has been studied in connection with
the Keller--Segel diffusion--aggregation models in the past few decades. The general case $p\neq 2$ considered here 
appears in the study of Thomas--Fermi limit regime for the Choquard equations with local
repulsion. We establish optimal ranges of parameters for the validity of the above interpolation inequality, discuss the existence and qualitative properties of the
nonnegative maximizers, and in some special cases estimate the optimal constant. For $p=2$ it is known that the maximizers are H\"older continuous and compactly supported on a ball. We
show that for $p<2$ the maximizers are smooth functions supported on $\R^N$,  while for $p>2$ the maximizers  consist of a
characteristic function of a ball and a nonconstant nonincreasing H\"older continuous function supported on the same ball. We use these qualitative properties
of the maximizers to establish the validity of the Thomas--Fermi approximations for the Choquard equations with local repulsion. The results are verified numerically with extensive examples.
\end{abstract}
\maketitle

\section{Introduction}

\subsection{Background}
Our starting point is the Choquard type equation
\begin{equation}\tag{$P_\eps$}\label{eqPeps}
	-\Delta w +\eps w+|w|^{q-2}w=(I_{\alpha}*|w|^p)|w|^{p-2}w\quad\text{in $\R^N$},
\end{equation}
where $w:\R^N\to\R$ is an unknown function, $N\geq 3$, $p>1$, $q>2$ and $\eps\ge 0$. By $I_\alpha(x):=A_\alpha|x|^{\alpha-N}$ we denote the Riesz potential with $\alpha\in(0,N)$, and $*$ stands for the standard convolution in $\R^N$. The choice of the {\em Riesz constant} $A_\alpha:=\frac{\Gamma((N-\alpha)/2)}{\pi^{N/2}2^{\alpha}\Gamma(\alpha/2)}$
ensures that $I_\alpha(x)$ could be interpreted as the Green's function of the fractional Laplacian operator $(-\Delta)^{\alpha/2}$ in $\R^N$, and that the semigroup property $I_{\alpha+\beta}=I_\alpha*I_\beta$ holds for all $\alpha,\beta\in(0,N)$ such that $\alpha+\beta<N$, see for example \cite{DuPlessis}*{pp.\thinspace{}73--74}.

When $N=3$, $\alpha=2$, $p=2$ and $q=4$, under the name of Gross--Pitaevskii--Poisson  equation, \eqref{eqPeps} was proposed in cosmology as a model to describe the Cold Dark Matter made of axions or bosons in the form of self--gravitating  Bose--Einstein Condensate  at absolute zero temperatures \cite{Wang,Bohmer-Harko,Chavanis-11,Braaten,Eby}.
The nonlocal convolution term on the right hand side of  \eqref{eqPeps} represents the gravitational attraction between bosonic particles. The
local term $|w|^{q-2}w$ accounts for the repulsive short--range quantum force self--interaction between bosons. Similar models appear in the literature under the names
Ultralight Axion Dark Matter, and Fuzzy Dark Matter, see \cite{Braaten} for a history survey. More generally, \eqref{eqPeps} can be seen as a Hartree type
nonlinear Schr\"{o}dinger equation with an attractive long-range interaction and repulsive short-range interactions. 
While for most of the relevant physical applications the parameter $p$ is chosen to be $2$, the cases with $p\neq 2$ appear in several relativistic models of the density functional theory \citelist{\cite{BPO-2002}\cite{BLS-2008}\cite{BGT-2012}}.

By a {\em ground state} of \eqref{eqPeps} we understand a nonnegative weak solution $w \in H^1(\R^N)\cap L^q(\R^N)$ which has a minimal energy of the functional
\begin{equation}\label{eq02}
	\mathcal{I}_{\eps}(w):=\frac{1}{2}\int_{\R^N}|\nabla w|^2dx+\frac{\eps}{2}\int_{\R^N}|w|^2dx+\frac{1}{q}\int_{\R^N}|w|^q dx-\frac{1}{2p}\int_{\R^N}(I_{\alpha}*|w|^p)|w|^p dx
\end{equation}
amongst all nontrivial finite energy solutions of \eqref{eqPeps}.
The following was proved in \cite{ZLVMeps}*{Theorem 1.1}, under optimal or near optimal assumptions on the parameters.

\begin{theorem}\label{thm01eps}
	Let $\frac{N+\alpha}{N}<p<\frac{N+\alpha}{N-2}$ and $q>2$, or $p\geq\frac{N+\alpha}{N-2}$ and $q>\frac{2Np}{N+\alpha}$.
  Then for each $\eps>0$,  \eqref{eqPeps} admits a positive, radial, monotone decreasing ground state solution $w_\eps \in H^1(\R^N)\cap L^1(\R^N)\cap C^2(\R^N)$.
	Moreover, there exists a positive constant $C$ depending on $N$, $\alpha$, $p$, $q$ and $\eps$, such that
	\begin{itemize}
		\item if \(p > 2\),
		\[
		\lim_{|x| \to \infty} w_\eps(x) |x|^{\frac{N - 1}{2}} e^{\sqrt{\eps}|x|}=C;
		\]
		\item if \(p = 2\),
		\[
      \lim_{|x| \to \infty} w_\eps(x) |x|^{\frac{N - 1}{2}} \exp \left(\int_{\rho_\eps}^{|x|} \sqrt{\eps - \tfrac{A_\alpha\|w_\eps\|_2^2}{s^{N - \alpha}}}
      \,ds\right)=C,
      \qquad \rho_\eps=\Big(\eps^{-1}A_\alpha\|w_\eps\|_2^2\Big)^{\frac{1}{N - \alpha}};
		\]
		\item if \(p < 2\),
		$$\lim_{x\to\infty}w_\eps(x)|x|^\frac{N-\alpha}{2-p}=\Big(\eps^{-1}A_\alpha\|w_\eps\|_p^p\Big)^{\frac1{2-p}}.$$
	\end{itemize}
\end{theorem}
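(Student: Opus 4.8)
The plan is to construct $w_\eps$ by a constrained variational method and then extract its shape and decay by combining elliptic regularity with comparison arguments (sub/supersolutions and WKB) for the radial equation.

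\emph{Existence.} Since the nonlocal term in $\mathcal I_\eps$ carries the ``wrong'' sign and the high homogeneity $2p$, the functional is in general unbounded below, so rather than minimizing $\mathcal I_\eps$ directly I would minimize a scaling-adapted quotient, e.g. a minimizer of
$$\inf\Big\{\tfrac12\|\nabla u\|_2^2+\tfrac\eps2\|u\|_2^2+\tfrac1q\|u\|_q^q \;:\; \textstyle\iint_{\R^N\times\R^N} \tfrac{|u(x)|^p|u(y)|^p}{|x-y|^{N-\alpha}}\,dx\,dy=1\Big\},$$
or equivalently work on the associated Nehari manifold; the hypotheses on $(p,q)$ — Hardy–Littlewood–Sobolev subcriticality of the nonlocal term when $p<\frac{N+\alpha}{N-2}$, and the coercivity supplied by the $L^q$ term when $p\ge\frac{N+\alpha}{N-2}$ — are exactly what makes the corresponding geometry and coercivity work. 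Given a minimizing sequence I would replace every function by its Schwarz symmetrization: by the Riesz rearrangement inequality the nonlocal integral does not decrease, the $L^2$ and $L^q$ norms are invariant and the Dirichlet integral does not increase, so after a harmless rescaling back onto the constraint one gets a radial nonincreasing minimizing sequence bounded in $H^1\cap L^q$. Strong convergence then follows from the compact embedding $H^1_{\mathrm{rad}}(\R^N)\hookrightarrow L^r(\R^N)$, $2<r<2^*$, together with the $L^q$ bound and Hardy–Littlewood–Sobolev to pass to the limit in the nonlocal term (a concentration–compactness argument covering the borderline cases). A Lagrange multiplier and a final rescaling turn the minimizer into a nonnegative radial nonincreasing ground state of \eqref{eqPeps}.

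\emph{Regularity, positivity, strict monotonicity.} From $w_\eps\in H^1\cap L^q$ and $0<\alpha<N$, Hardy–Littlewood–Sobolev and a Brezis–Kato / Moser iteration give $w_\eps\in L^\infty(\R^N)$; then $I_\alpha*w_\eps^p$ is locally H\"older continuous and Schauder estimates applied to $-\Delta w_\eps=(I_\alpha*w_\eps^p)w_\eps^{p-1}-\eps w_\eps-w_\eps^{q-1}$ bootstrap to $w_\eps\in C^2(\R^N)$. The strong maximum principle upgrades $w_\eps\ge0$ to $w_\eps>0$, and since $I_\alpha*w_\eps^p$ is a strictly radially decreasing function of $|x|$ (convolution of two strictly radially decreasing functions), the radial ODE forbids $w_\eps$ from being constant on any interval, so the nonincreasing $w_\eps$ is in fact strictly decreasing.

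\emph{Rough decay and the Riesz potential at infinity.} Using $w_\eps\to0$ and $I_\alpha*w_\eps^p\to0$, a comparison argument shows $-\Delta w_\eps+\tfrac\eps2 w_\eps\le0$ outside a large ball when $p\ge2$, so $w_\eps$ decays at least exponentially, while for $p<2$ a first polynomial bound follows similarly. In every case $w_\eps^p\in L^1(\R^N)$, hence by splitting the convolution integral near $0$ and near $x$ one obtains $I_\alpha*w_\eps^p(x)=A_\alpha\|w_\eps\|_p^p|x|^{\alpha-N}+o(|x|^{\alpha-N})$ as $|x|\to\infty$; this profile is the input for the sharp rates.

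\emph{Sharp asymptotics and the main obstacle.} Substituting the Riesz asymptotics into the radial equation
$$-w_\eps''-\tfrac{N-1}{r}w_\eps'+\eps w_\eps=\big(A_\alpha\|w_\eps\|_p^p\, r^{\alpha-N}+o(r^{\alpha-N})\big)w_\eps^{p-1}-w_\eps^{q-1},$$
I would argue case by case. If $p>2$ both terms on the right are $o(w_\eps)$, so $w_\eps$ is asymptotic to the decaying solution of $-\Delta w+\eps w=0$, giving the prefactor $|x|^{-(N-1)/2}$ and the rate $e^{-\sqrt\eps|x|}$, with the limit constant pinned down by sub/supersolutions $(1\pm\delta)$ times that profile. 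If $p=2$ the right-hand side is linear and the equation reads $-\Delta w+\big(\eps-A_\alpha\|w_\eps\|_2^2|x|^{\alpha-N}+o(\cdot)\big)w=\text{(h.o.t.)}$ with a slowly varying effective potential changing sign at $\rho_\eps=(\eps^{-1}A_\alpha\|w_\eps\|_2^2)^{1/(N-\alpha)}$; the Liouville–Green (WKB) approximation then yields exactly the stated $\exp\!\big(\int_{\rho_\eps}^{|x|}\sqrt{\eps-A_\alpha\|w_\eps\|_2^2 s^{\alpha-N}}\,ds\big)$ correction, the $V^{-1/4}$ and turning-point factors being absorbed into $C$, and the error is finite because $V(r)-\eps=O(r^{\alpha-N})$. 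If $p<2$ the term $w_\eps^{p-1}$ dominates $w_\eps$, the dominant balance is $\eps w_\eps\approx A_\alpha\|w_\eps\|_p^p|x|^{\alpha-N}w_\eps^{p-1}$, and ordered sub/supersolutions $c_\pm|x|^{-(N-\alpha)/(2-p)}$ with $c_\pm\to(\eps^{-1}A_\alpha\|w_\eps\|_p^p)^{1/(2-p)}$ trap $w_\eps$; note $(N-\alpha)/(2-p)>N$ precisely because $p>\frac{N+\alpha}{N}$, which also gives $w_\eps\in L^1(\R^N)$. I expect the delicate points to be the WKB error control when $p=2$ and, for $p<2$, verifying uniformly that the $-\Delta$ and $w_\eps^{q-1}$ corrections are genuinely of lower order — both require the subleading behaviour of $I_\alpha*w_\eps^p$ rather than just its leading term.
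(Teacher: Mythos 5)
Theorem~\ref{thm01eps} is not actually proved in this paper: it is quoted verbatim from \cite{ZLVMeps}*{Theorem~1.1}, as the sentence immediately preceding the statement says (``The following was proved in \cite{ZLVMeps}\ldots''). There is therefore no in-text proof against which to compare your sketch, and any fair evaluation has to be of the sketch on its own terms.

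Read that way, your outline is sensible but has a concrete circularity in the case $p<2$. You use $w_\eps^p\in L^1(\R^N)$ to obtain the Riesz asymptotics $I_\alpha*w_\eps^p(x)=A_\alpha\|w_\eps\|_p^p\,|x|^{\alpha-N}+o(|x|^{\alpha-N})$, but for $\tfrac{N+\alpha}{N}<p<2$ the existence step only yields $w_\eps\in H^1\cap L^q$, which does not give $w_\eps\in L^p$; and the ``first polynomial bound'' you invoke for $p<2$ is itself derived from the very Riesz asymptotics you are trying to establish. Breaking the circle requires an HLS bootstrap along the equation (of the kind carried out for the \eqref{eqTF} equation in Lemma~\ref{lemm_11} here, producing $w_\eps\in L^{p-\delta}$ for some $\delta>0$ before any decay is known), and that step is not optional. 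Two further points need more than a wave of the hand: in the $p=2$ regime, the Liouville--Green formula only yields the stated $\exp\big(\int_{\rho_\eps}^{|x|}\sqrt{\eps-A_\alpha\|w_\eps\|_2^2\,s^{\alpha-N}}\,ds\big)$ profile after one checks summability of the WKB error and a turning-point matching at $\rho_\eps$ --- neither is automatic when $\alpha$ is close to $N$, where the tail $|V-\eps|\sim s^{\alpha-N}$ decays slowly --- and in the existence step the compact embedding $H^1_{\mathrm{rad}}\hookrightarrow L^r$ with $r<2^*$ does not cover the nonlocal exponent $\tfrac{2Np}{N+\alpha}$ when $p\ge\tfrac{N+\alpha}{N-2}$, so the role of the extra hypothesis $q>\tfrac{2Np}{N+\alpha}$ in restoring compactness must be made explicit. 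Finally, rescaling a constrained minimizer to a solution of \eqref{eqPeps} does not by itself show it is a ground state in the sense defined after \eqref{eq02}; you would still need a Nehari or Poho\v zaev characterisation to get minimality among all solutions.
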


In addition to the existence of ground states for every fixed $\eps>0$, in \cite{ZLVMeps} the authors have identified and studied several limit regimes for
ground states of \eqref{eqPeps}, as  $\eps\to 0$ or $\eps\to\infty$. One of  the relevant limit regimes is associated with the rescaling
\begin{equation}\label{TF-scale}
u_\eps(x):=\eps^{-\frac{1}{q-2}}w\Big(\eps^{-\frac{2p-q}{\alpha(q-2)}}x\Big),
\end{equation}
that converts \eqref{eqPeps} to the equation
\begin{equation}\label{eqPeps-TF}
	-\eps^{\nu}\Delta u_\eps+u_\eps+|u_\eps|^{q-2}u_\eps=(I_{\alpha}*|u_\eps|^p)|u_\eps|^{p-2}u_\eps\quad\text{ in $\R^N$},
\end{equation}
where $\nu=\frac{2(2p+\alpha)-q(2+\alpha)}{\alpha(q-2)}$. The {\em Thomas--Fermi limit regime} for the Choquard equation \eqref{eqPeps} is the scenario when
$\eps\to 0$ and $\nu>0$, or $\eps\to\infty$ and $\nu<0$. In this regime, $\eps^{\nu}$ approaches zero and 
the {\em formal} limit equation for Eq. \eqref{eqPeps-TF} is the {\em Thomas--Fermi} type integral equation
\begin{equation*}\tag{$T\!F$}\label{eqTF}
u+|u|^{q-2}u=(I_{\alpha}*|u|^p)|u|^{p-2}u\quad\text{ in $\R^N$}.
\end{equation*}
When $p=2$ and $\alpha=2$, equations equivalent to  \eqref{eqTF} are well-known in astrophysical literature, cf. \cite[p.92]{Chandrasekhar},
and their mathematical analysis  goes back to \citelist{\cite{Auchmuty-Beals}\cite{Lions-81}}. More recently \eqref{eqTF} with $p=2$ and
general $\alpha\in(0,N)$ was studied in \citelist{\cite{Carrillo-CalcVar}\cite{Carrillo-NA}} (existence of solutions) and
\citelist{\cite{Carrillo-Sugiyama}\cite{Volzone}\cite{Carrillo-NA-2021}}  (uniqueness), all in the context of Keller--Segel models. See also \cite[Theorem 2.6]{ZLVMeps} which proves the existence of a ground state for \eqref{eqTF} with $p=2$ for the optimal range $q>\frac{4N}{N+\alpha}$, extending some of the existence results in \citelist{\cite{Carrillo-CalcVar}\cite{Carrillo-NA}}. 

In \cite[Theorems 2.7 and 3.2]{ZLVMeps}, for the special case $p=2$ and $\alpha=2$ the authors established the convergence of the rescaled
ground states  $u_\eps$ of \eqref{eqPeps-TF} to the ground states $u$ of \eqref{eqTF} in the Thomas--Fermi regime, thus justifying the formal analysis of the rescaling \eqref{TF-scale}. Recall that for $p=2$ the limit ground state of \eqref{eqTF} is compactly supported on a ball, so that the rescaled ground states $u_\eps$ develop a sharp ``corner'' near the boundary of the support of the limit ground state. This phenomenon is well-known in astrophysics, where the radius of support of the limit ground state provides approximate radius of the astrophysical object. In the context of self--gravitating Bose--Einstein Condensate models, the Thomas--Fermi limit regime (under the name of {\em Thomas--Fermi approximations}) was used as the key tool in the astrophysical studies of the Gross--Pitaevskii--Poisson equation ($\alpha=2$, $p=2$, $q=4$) in \cite{Wang,Bohmer-Harko,Chavanis-11}.
\smallskip

The main goal of this work is to study the existence and qualitative properties of ground states for  \eqref{eqTF} for $p\neq 2$ and $\alpha\in(0,N)$, under optimal assumptions on the parameters, and to use these properties to establish the validity of the Thomas--Fermi approximations for the Choquard equations with local repulsion. In particular, we are going to prove that:
\begin{itemize}
  \item if $p<2$, ground states for  \eqref{eqTF} are positive smooth functions supported on $\R^N$;\smallskip
  \item if $p>2$, ground states for  \eqref{eqTF} are discontinuous and represented as a linear combination of a characteristic function of a ball, and a non-constant nonincreasing H\"older continuous function supported on the same ball. 
\end{itemize}
As a comparison, for the special case $p=2$, it is well--known in \citelist{\cite{Carrillo-CalcVar}\cite{Carrillo-NA}} that ground states for  \eqref{eqTF} are H\"older continuous and compactly supported on a ball.  
We also establish qualitative properties of the ground states, including decay at infinity for $p<2$, and regularity near the boundary of the support for
$p > 2$. This information becomes crucial in the proofs of convergence of the rescaled ground states of  \eqref{eqPeps} to the limit profiles governed by  \eqref{eqTF}.

\subsection{Variational setup for \eqref{eqTF} and main results}
Solutions of the Thomas--Fermi equation \eqref{eqTF} correspond, at least formally, to the critical points of the energy
$$
E(u)=\frac{1}{2}\int_{\bbr^N}|u|^2dx+\frac{1}{q}\int_{\bbr^N}|u|^q\,dx-\frac{1}{2p}\mathcal{D}_\alpha(|u|^p,|u|^p),
$$
where, and in what follows, $\D_\alpha$ denotes the Coulomb interaction
$$\mathcal{D}_\alpha(f,g):=A_\alpha\iint_{\R^N\times\R^N}\frac{f(x)\,g(y)}{|x-y|^{N-\alpha}}dx\,dy,$$
here $A_\alpha$ is the Riesz constant. Throughout this work, we assume the following restrictions on the parameters,
\begin{equation}\label{A}
\frac{1}{q}<\frac{N+\alpha}{2Np}<\frac{1}{2}.
\end{equation}
Then using the Hardy--Littlewood--Sobolev (HLS) and H\"older's inequalities we can control the Coulomb term, i.e.,
\begin{equation}\label{eq19}
\D_\alpha(|u|^p,|u|^p)\leq \mathscr{C}_{N,\alpha}\|u\|^{2p}_{\frac{2Np}{N+\alpha}}\leq \mathscr{C}_{N,\alpha}\|u\|^{2p\theta}_2\|u\|^{2p(1-\theta)}_{q}.
\end{equation}
Here $\mathscr{C}_{N,\alpha}=\frac{\Gamma\left((N-\alpha)/2\right)}{2^\alpha\pi^{\alpha/2}\Gamma((N+\alpha)/2)}\big(\frac{\Gamma(N)}{\Gamma(N/2)}\big)^{\alpha/N}$ is the sharp constant in the Hardy--Littlewood--Sobolev inequality \citelist{\cite{Lieb}\cite{Lieb-Loss}},
and $\theta \in (0,1)$ satisfies the condition
\begin{equation}\label{eq20}
\frac{N+\alpha}{2Np}=\frac{\theta}{2}+\frac{1-\theta}{q}, \quad \text{or } \ \theta=\frac{(N+\alpha)q-2Np}{Np(q-2)}.
\end{equation}
Therefore, the conditions in  \eqref{A} ensure that the energy $E$ is continuous and Fr\'echet differentiable on $L^2(\R^N)\cap L^q(\R^N)$, and its critical
points are solutions of \eqref{eqTF}. Moreover, critical points of $E$ (and solutions of \eqref{eqTF}) satisfy the Nehari identity
\begin{equation}\label{Nehari}
	\int_{\R^N}|u|^2dx+\int_{\R^N}|u|^qdx=\D_\alpha(|u|^p, |u|^p).
\end{equation}
By a {\em ground state} of \eqref{eqTF} we understand a nonnegative solution $u \in L^2(\R^N)\cap L^q(\R^N)$ of \eqref{eqTF} which has a minimal energy $E$ amongst all functions in the Poho\v zaev manifold $\mathscr{P}$, defined as 
\begin{equation}\label{P-manifold}
\mathscr{P}=\left\{u\neq 0: u\in L^2(\R^N)\cap L^q(\R^N),\; \mathcal{P}(u)=0\right\},
\end{equation}
where the functional $\mathcal{P}$ is given by
\begin{equation*}
	\mathcal{P}(u):=\frac{N}{2}\int_{\R^N}|u|^2dx+\frac{N}{q}\int_{\R^N}|u|^qdx-\frac{N+\alpha}{2p}\D_\alpha(|u|^p, |u|^p).
\end{equation*}
Since the energy $E$ is not bounded from below (by replacing $u$ with $u(\cdot/\lambda)$ with $\lambda \to +\infty$), 
constrained minimization techniques are better suited for the construction of ground states.
Moreover, the Poho\v zaev manifold $\mathscr{P}$ is preferred over the Nehari manifold characterised by  Eq.~\eqref{Nehari}, primarily because of 
simplifications due to the common expressions  $\frac{1}{2}\|u\|_2^2 + \frac{1}{q}\|u\|_q^q$
appearing in both $E(u)$ and $\mathcal{P}(u)$, as demonstrated in Section~\ref{sec-limit}.

Another way to construct ground states of \eqref{eqTF} is to look for maximizers of the 
Gagliardo--Nirenberg quotient associated to the interpolation inequality \eqref{eq19}, i.e.,  
\begin{equation}\label{eqCi}
  \mathscr{C}_{N,\alpha,p,q}:=\sup\left\{\frac{\D_\alpha(|v|^p,|v|^p)}{\|v\|_2^{2p\theta}\|v\|_q^{2p(1-\theta)}}:v\in
  L^2(\R^N)\cap L^q(\R^N), v\neq 0 \right\}.
\end{equation}
From ~\eqref{eq19}, it is clear that $\mathscr{C}_{N,\alpha,p,q}\le \mathscr{C}_{N,\alpha}$. 
Note that the quotient in Eq. \eqref{eqCi} is invariant w.r.t. translation, dilation and scaling;  
every maximizer for $\mathscr{C}_{N,\alpha,p,q}$ (if it exists) can be rescaled to a ground states solutions of \eqref{eqTF} (see Lemma \ref{rescaling_1} below).

Using symmetric rearrangements, Strauss' radial bounds and Helly's selection principle for radial functions, we prove the following result.

\begin{theorem}\label{t-GN}
Let $N\ge 1$, $\alpha\in(0,N)$, $p>\frac{N+\alpha}{N}$ and $q>\frac{2Np}{N+\alpha}$.
Then there exists a nonnegative radial nonincreasing maximizer $u_*\in L^2(\R^N)\cap L^q(\R^N)$ for $\mathscr{C}_{N,\alpha,p,q}$, which is also a ground state of the Thomas--Fermi equation \eqref{eqTF}.
\end{theorem}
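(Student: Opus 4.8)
The plan is a direct-method argument exploiting the two-parameter scaling invariance $v\mapsto\lambda v(\cdot/\mu)$ of the Gagliardo--Nirenberg quotient in \eqref{eqCi}, together with symmetrization and Strauss-type radial bounds. I would start from a maximizing sequence $(u_n)$ for $\mathscr{C}_{N,\alpha,p,q}$. Since the Riesz kernel $|x-y|^{\alpha-N}$ is a symmetric strictly decreasing function of $|x-y|$, the Riesz rearrangement inequality gives $\D_\alpha(|u_n|^p,|u_n|^p)\le\D_\alpha((u_n^*)^p,(u_n^*)^p)$ while $\|u_n^*\|_r=\|u_n\|_r$ for every $r$; hence $(u_n^*)$ is again maximizing and we may assume each $u_n$ is nonnegative, radial and nonincreasing. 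Under $v\mapsto\lambda v(\cdot/\mu)$ the numerator scales by $\lambda^{2p}\mu^{N+\alpha}$, and by the identity \eqref{eq20} for $\theta$ the denominator scales by $\lambda^{2p}\mu^{Np\theta+2Np(1-\theta)/q}=\lambda^{2p}\mu^{N+\alpha}$, so the quotient is scaling invariant; choosing $\lambda_n,\mu_n$ appropriately I normalize $\|u_n\|_2=\|u_n\|_q=1$, so that $\D_\alpha((u_n)^p,(u_n)^p)\to\mathscr{C}_{N,\alpha,p,q}\in(0,\infty)$.

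The crucial step is compactness. For radial nonincreasing $u\in L^t(\R^N)$ one has $\|u\|_t^t\ge|B_{|x|}|\,u(x)^t$, i.e.\ the Strauss bound $u(x)\le\omega_N^{-1/t}\|u\|_t|x|^{-N/t}$ with $\omega_N=|B_1|$. Applied with $t=2$ and $t=q$ to the normalized sequence, this gives the uniform majorant
\[
0\le u_n(x)\le g(x):=\min\bigl\{\omega_N^{-1/2}|x|^{-N/2},\ \omega_N^{-1/q}|x|^{-N/q}\bigr\}.
\]
Writing $s_0:=\tfrac{2Np}{N+\alpha}$, the hypotheses $p>\tfrac{N+\alpha}{N}$ and $q>\tfrac{2Np}{N+\alpha}$ amount precisely to $2<s_0<q$, and a short computation shows $g\in L^{s_0}(\R^N)$: near the origin $g(x)\sim|x|^{-N/q}$ with integrability needing $s_0<q$, while near infinity $g(x)\sim|x|^{-N/2}$ with integrability needing $s_0>2$. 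By Helly's selection principle on each annulus $\{1/k\le|x|\le k\}$ together with a diagonal extraction, a subsequence of $(u_n)$ converges pointwise a.e.\ to a nonnegative radial nonincreasing $u_*$ satisfying $0\le u_*\le g$.

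To finish: if $u_*\equiv0$, then dominated convergence with majorant $g\in L^{s_0}$ gives $\|u_n\|_{s_0}\to0$, whence by \eqref{eq19} $\D_\alpha((u_n)^p,(u_n)^p)\le\mathscr{C}_{N,\alpha}\|u_n\|_{s_0}^{2p}\to0$, contradicting $\D_\alpha((u_n)^p,(u_n)^p)\to\mathscr{C}_{N,\alpha,p,q}>0$; so $u_*\not\equiv0$. The same majorant yields $u_n\to u_*$ in $L^{s_0}(\R^N)$, hence $u_n^p\to u_*^p$ in $L^{2N/(N+\alpha)}(\R^N)$, and since by the HLS inequality $\D_\alpha(\cdot,\cdot)$ is a bounded bilinear form on $L^{2N/(N+\alpha)}(\R^N)$ we get $\D_\alpha((u_*)^p,(u_*)^p)=\mathscr{C}_{N,\alpha,p,q}$. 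Fatou's lemma gives $\|u_*\|_2\le\liminf\|u_n\|_2=1$ and $\|u_*\|_q\le\liminf\|u_n\|_q=1$, so $u_*\in L^2(\R^N)\cap L^q(\R^N)$, $u_*\neq0$, and
\[
\frac{\D_\alpha((u_*)^p,(u_*)^p)}{\|u_*\|_2^{2p\theta}\|u_*\|_q^{2p(1-\theta)}}\ \ge\ \D_\alpha((u_*)^p,(u_*)^p)\ =\ \mathscr{C}_{N,\alpha,p,q},
\]
while the left-hand side is $\le\mathscr{C}_{N,\alpha,p,q}$ by definition of the supremum; hence equality holds and $u_*$ is a nonnegative radial nonincreasing maximizer. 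That, after the natural rescaling, $u_*$ solves \eqref{eqTF} and is a ground state then follows from Lemma~\ref{rescaling_1}.

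The main obstacle is the compactness step: a priori a maximizing sequence could concentrate at the origin (collapsing to a point mass, so that $u_*\equiv0$) or spread out to infinity, and both failures are excluded only through the observation that normalizing \emph{simultaneously} in $L^2$ and $L^q$ produces a single $L^{s_0}$-majorant $g$ --- which exists precisely because $2<\tfrac{2Np}{N+\alpha}<q$ under the stated assumptions on $p$ and $q$. Everything else (Riesz rearrangement, the Strauss bound, Helly, and the identification of the rescaled maximizer with a ground state) is standard or covered by the cited lemma.
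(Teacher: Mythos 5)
Your proof is correct and follows essentially the same scheme as the paper: Riesz rearrangement to pass to radial nonincreasing representatives, two-parameter scaling to normalize $\|u_n\|_2=\|u_n\|_q=1$, Strauss bounds to construct a uniform majorant $g$, Helly's selection principle for the pointwise limit, and then identification of the limit as a maximizer, with the final rescaling to a ground state delegated (as in the paper) to Lemma~\ref{rescaling_1}. The one genuine variation is how you pass to the limit in the Coulomb term: the paper invokes a nonlocal Brezis--Lieb lemma to obtain $\D_\alpha(u_n^p,u_n^p)\to\D_\alpha(u_*^p,u_*^p)$, whereas you derive it elementarily --- dominated convergence with majorant $g\in L^{s_0}$ gives strong convergence $u_n\to u_*$ in $L^{s_0}$ with $s_0=\tfrac{2Np}{N+\alpha}$, hence $u_n^p\to u_*^p$ in $L^{2N/(N+\alpha)}$, and continuity of the bilinear form $\D_\alpha$ on $L^{2N/(N+\alpha)}\times L^{2N/(N+\alpha)}$ (by HLS) does the rest; ruling out $u_*\equiv 0$ comes for free from the same $L^{s_0}$-convergence. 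This avoids an external citation at the cost of a couple of lines, and is arguably the cleaner route given that the majorant $g\in L^{s_0}$ is already in hand; your final ``supremum squeeze'' to upgrade $\|u_*\|_2,\|u_*\|_q\le 1$ to equality is logically equivalent to the paper's proof by contradiction.
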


\begin{remark} \label{rem:exact}
While the precise value  of $\mathscr{C}_{N,\alpha,p,q}$ is not known in general, 
we prove below (see Proposition \ref{limit_alpha}) that for fixed admissible values of $N$, $p$ and $q$, 
\begin{equation}\label{eq-0N}
\lim_{\alpha\to 0} \mathscr{C}_{N,\alpha,p,q}=1,\qquad \lim_{\alpha\to N} A^{-1}_{\alpha}\mathscr{C}_{N,\alpha,p,q}=1,
\end{equation}
here $A_\alpha$ is the Riesz constant. For specific combination of parameters, we can estimate $\mathscr{C}_{N,\alpha,p,q}$ by looking  at the ansatz $v(x)=\lambda (1+|x|^2/\mu^2)^{-\gamma}$. 
The invariance of the quotient \eqref{eqCi} with respect to $\lambda$ and $\mu$ suggests the choice
$\lambda=\mu=1$ for simplicity, leading to the governing equation 
\begin{equation}\label{eq:exactNval} 
c_1v+c_2|v|^{q-2}v=(I_\alpha *|v|^p)|v|^{p-2}v,
\end{equation}
with some $c_1,c_2>0$, that is equivalent to \eqref{eqTF} after a scaling.  Using the explicit representation
\[
I_{\alpha} *(1+|x|^2)^{-\gamma p} = \frac{\Gamma(\gamma p-\alpha/2)\Gamma((N-\alpha)/2)}{2^\alpha\Gamma(\gamma p)\Gamma(N/2)}\, {}_2F_1\Big(
\gamma p-\frac{\alpha}{2},\frac{N-\alpha}{2};\frac{N}{2};-|x|^2
\Big)
\] 
in terms of the Gauss hypergeometric function derived from~\cite{YanghongPME}, 
we can show that a nontrivial solution of \eqref{eq:exactNval} corresponds to the function $v(x) = (1+|x|^2)^{-(N+1)/2}$ with~\footnote{A symbolic solver in Maple or Mathematica is recommended. The same $v$, after appropriate rescaling, satisfies~\eqref{eqTF} 
with another set of parameters $p = \frac{N+\alpha+2}{N+2}, q= \frac{2N+2}{N+2}$, but the 
corresponding quotient in~\eqref{eqCi} is not bounded from above because the condition $p>(N+\alpha)/N$ in Theorem~\eqref{t-GN} is violated.} 
\begin{equation}\label{eq:explictpq}
p = \frac{N+\alpha+2}{N+1},\qquad q= \frac{2(N+2)}{N+1}.
\end{equation}
In the absence of uniqueness results for \eqref{eq:exactNval} we can not conclude that this solution is an optimizer for \eqref{eqCi}
however the optimal constant can be estimated (more details in Appendix~\ref{app:int}) as
\begin{equation}\label{C-estimate}
\mathscr{C}_{N,\alpha,p,q} \ge
 \frac{N(N+\alpha+2)}{\pi^{\alpha/2}2^{\alpha+1}(N+2)} \frac{\Gamma((N-\alpha)/2)}{\Gamma((N+\alpha)/2+1)}
  \left(
    \frac{N+2}{2(N+1)}\frac{\Gamma(N+1)}{\Gamma(N/2+1)}
  \right)^{\alpha/N}.
\end{equation}
We conjecture that $v(x) = (1+|x|^2)^{-(N+1)/2}$ is an optimizer for $\mathscr{C}_{N,\alpha,p,q}$ for the specific values of $p$ and $q$ in \eqref{eq:explictpq}, and \eqref{C-estimate} is actually an equality.

\end{remark}

\begin{remark}\label{r-sharp}
  The substitution $\rho=|u|^p$, $m=q/p$ and $n=2/p$ leads to an equivalent formulation of the quotient in Eq.
  \eqref{eqCi}, i.e.,
\begin{equation*}
\mathscr{C}_{N,\alpha,m,n}:=\sup\left\{\frac{\D_\alpha(\rho,\rho)}{\left(\int_{\R^N}\rho^{n} dx\right)^{\frac{2\theta}{n}}\left(\int_{\R^N}\rho^{m}
dx\right)^{\frac{2(1-\theta)}{m}}}:0\le\rho\in L^{n}(\R^N)\cap L^{m}(\R^N), \rho\neq 0 \right\}.
\end{equation*}
The corresponding interpolation inequality then takes the form
\begin{equation}\label{eq-Keller-S}
\iint_{\mathbb{R}^N \times \mathbb{R}^N} \frac{|\rho(x)|\,|\rho(y)|}{|x - y|^{d-\alpha}} dx\, dy\le C_{N,\alpha,m,n}\Big(\int_{{\mathbb R}^N}|\rho|^n dx\Big)^{\frac{2\theta}{n}} \Big(\int_{{\mathbb R}^N}|\rho|^m dx\Big)^{\frac{2(1-\theta)}{m}},
\end{equation}
for all $\rho \in L^{n}(\R^N)\cap L^{m}(\R^N)$, where $0<n<\frac{2N}{N+\alpha}<m$, and where by $L^n(\R^N)$ we denote the class of $n$--integrable functions with any $n>0$.
\eqref{eq-Keller-S} can be seen as a standard interpolation associated to the Hardy--Littlewood--Sobolev inequality,
which however includes {\em sublinear} exponents $n<1$.
Relevant variational problems with $n=1$ can be found in the early works by Auchmuty and Beals \cite{Auchmuty-Beals} and P.-L.~Lions \citelist{\cite{Lions-81}\cite{Lions-I1}*{Section II}} with $\alpha=2$. The case $\alpha\in(0,N)$ in the context of diffusion--aggregation models was studied in the form \eqref{eq-Keller-S} in the recent papers \citelist{\cite{Carrillo-CalcVar}\cite{Carrillo-NA}\cite{Carrillo-Sugiyama}\cite{Volzone}\cite{Carrillo-NA-2021}}.
We are not aware of any works where the case $n\neq 1$ was considered.
\end{remark}

Mathematically, the most striking phenomenon related to \eqref{eqTF} is how the behaviour
of the support of ground states to \eqref{eqTF} depends on the value of $p$.  Our main result in this work is the following.

\begin{theorem}\label{thm01}
  Let $N\ge 1$, $\alpha\in(0,N)$, $p>\frac{N+\alpha}{N}$, $q>\frac{2Np}{N+\alpha}$. Then
  every nonnegative radial nonincreasing ground state $u\in L^2(\R^N)\cap L^q(\R^N)$ of \eqref{eqTF} is $C^\infty$ in the set $\{x\in\R^N:u(x)>0\}$ and:
	\begin{itemize}
	\item[$(a)$]
    if $p<2$, then $\left\{u>0\right\}=\R^N$ and
    $\lim_{x\to\infty}u(x)|x|^\frac{N-\alpha}{2-p}=\left(A_\alpha\int_{\R^N}
    u^p\,dx\right)^{\frac1{2-p}}$, where $A_\alpha$ is the Riesz constant;
	\smallskip
	
	\item[$(b)$]
    if $p=2$, then $u:\R^N\to \R$ is a H\"older continuous and $\left\{u>0\right\}=B_R$ for some $R>0$;
	\smallskip
	
	\item[$(c)$]
    if $p>2$, then $\left\{u>0\right\}=B_R$ for some $R>0$ and $$u=\lambda\chi_{B_R}+\phi,$$  where $\lambda\ge \left(\frac{p-2}{q-p}\right)^{1/(q-2)}$ is a constant, and where $\phi:B_R\to\R$ is a H\"older continuous
    radial nonincreasing function such that $\phi(0)>0$ and $\lim_{|x|\to R}\phi(|x|)=0$.
	\end{itemize}
\end{theorem}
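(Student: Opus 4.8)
My plan is to read off the structure of $u$ from the potential $\Phi:=I_\alpha\ast u^p$. Since $u\ge 0$, at every point with $u(x)>0$ equation \eqref{eqTF} is equivalent to $g(u(x))=\Phi(x)$, where
\[
g(t):=t^{2-p}+t^{q-p}\qquad(t>0),
\]
to be read as $g(t)=1+t^{q-2}$ when $p=2$. Before exploiting this I would establish two preliminaries. First, $u\in L^\infty(\R^N)$: from $g(t)\ge t^{q-p}$ and $q>p$ one has $u\le\Phi^{1/(q-p)}$ everywhere, hence $\Phi\le I_\alpha\ast\Phi^{p/(q-p)}$, and a Moser--type iteration on this inequality (started from the local integrability of $\Phi$ forced by the radial bounds implied by $u\in L^2\cap L^q$) closes to $L^\infty$ \emph{exactly} under $q>\tfrac{2Np}{N+\alpha}$, the borderline exponent that rules out a self--similar concentration $\Phi\sim|x|^{-\beta}$ at the origin. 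Second, granted $u\in L^\infty$ and the decay of $u$ forced by $u\in L^q$, the potential $\Phi$ is finite, bounded, continuous, radial, $\Phi(x)\to 0$ as $|x|\to\infty$, and --- being the Riesz potential of the nontrivial radially nonincreasing density $u^p$ --- \emph{strictly} decreasing in $|x|$. I would also record the elementary shape of $g$: for $p<2$, $g:[0,\infty)\to[0,\infty)$ is a strictly increasing bijection; for $p=2$, $u=(\Phi-1)_+^{1/(q-2)}$ wherever $u>0$; for $p>2$, $g$ decreases on $(0,t_*)$, increases on $(t_*,\infty)$, and attains its minimum $m_*:=g(t_*)$ at $t_*:=\bigl(\tfrac{p-2}{q-p}\bigr)^{1/(q-2)}$, so $g^{-1}$ splits into a lower branch $g_-^{-1}$ and an upper branch $g_+^{-1}$ on $[m_*,\infty)$.

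\textbf{Locating the support.} Here minimality enters, through a perturbation argument. After a dilation I may assume $u\in\mathscr{P}$; eliminating $\D_\alpha(|v|^p,|v|^p)$ via $\mathcal{P}(v)=0$ yields the identity $E(v)=\tfrac{\alpha}{N+\alpha}\bigl(\tfrac12\|v\|_2^2+\tfrac1q\|v\|_q^q\bigr)$ on $\mathscr{P}$, in particular $E(u)>0$. Given $\phi\ge 0$, $\phi\not\equiv 0$, supported in a compact subset of the interior of $\{u=0\}$, one has $(u+t\phi)^p=u^p+t^p\phi^p$ pointwise and all first--order cross terms vanish; writing $v_{\lambda(t)}:=(u+t\phi)(\cdot/\lambda(t))$ for the unique dilation returning $u+t\phi$ to $\mathscr{P}$, a short computation gives as $t\to0^+$
\[
E(v_{\lambda(t)})-E(u)=\begin{cases}
-\kappa\,t^p+o(t^p), & p<2,\\
\tfrac12 t^2\!\int_{\R^N}(1-\Phi)\phi^2+o(t^2), & p=2,\\
\tfrac12 t^2\|\phi\|_2^2+o(t^2), & p>2,
\end{cases}
\]
with $\kappa>0$ proportional to $\D_\alpha(u^p,\phi^p)$. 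Since $u$ is a ground state, $E(v_{\lambda(t)})\ge E(u)$, so: for $p<2$ no such $\phi$ can exist, forcing $u>0$ on all of $\R^N$; for $p=2$ no $\phi$ can live on a positive--measure set where $\Phi>1$, which together with the trivial inclusion $\{u>0\}\subseteq\{\Phi>1\}$ (from $u>0\Rightarrow\Phi=1+u^{q-2}>1$) gives $\{u>0\}=\{\Phi>1\}$; for $p>2$ the inequality is automatic, so minimality gives no extra information and the shape of $\{u>0\}$ will come from the equation.

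\textbf{Conclusion in each case.} Everywhere in $\{u>0\}$ one has $u=G(\Phi)$ for a $G$ that is real--analytic on the relevant range of $\Phi$, so a fractional Schauder bootstrap for the Riesz potential ($I_\alpha\ast$ gains $\alpha$ H\"older derivatives, picking exponents off the integers, and composition with $G$ preserves the regularity) promotes continuity to $u\in C^\infty(\{u>0\})$. \emph{If $p<2$}: $\{u>0\}=\R^N$ and $G=g^{-1}$; since $g(t)\sim t^{2-p}$ near $0$ we get $g^{-1}(s)\sim s^{1/(2-p)}$, and combined with $\int_{\R^N}u^p\,dx<\infty$ and $\Phi(x)|x|^{N-\alpha}\to A_\alpha\int_{\R^N}u^p\,dx$ (both obtained by iterating the decay bound on $u$, exactly as for \eqref{eqPeps} in Theorem~\ref{thm01eps}) this yields $u(x)|x|^{(N-\alpha)/(2-p)}\to\bigl(A_\alpha\int_{\R^N}u^p\,dx\bigr)^{1/(2-p)}$. \emph{If $p=2$}: $\{u>0\}=\{\Phi>1\}$, which --- as $\Phi$ is continuous, strictly decreasing, $\Phi(0)>1$ and $\Phi(x)\to 0$ --- is a ball $B_R$, and $u=(\Phi-1)_+^{1/(q-2)}$ is H\"older continuous, being the composition of the H\"older map $s\mapsto(s-1)_+^{1/(q-2)}$ with the H\"older potential $\Phi$ of a bounded, compactly supported density. \emph{If $p>2$}: with $B_R:=\{u>0\}$ (bounded, since $\Phi\ge m_*$ on $\{u>0\}$ while $\Phi\to 0$) one has $\Phi>m_*$ on $B_R$, hence pointwise $u(x)\in\{g_-^{-1}(\Phi(x)),g_+^{-1}(\Phi(x))\}$; because $\Phi$ is strictly decreasing, $g_+^{-1}\circ\Phi$ is strictly decreasing in $|x|$ while $g_-^{-1}\circ\Phi$ is strictly \emph{increasing}, so the nonincreasing $u$ can take the lower branch only on a null set and cannot switch from the upper to the lower branch (that would be a downward jump followed by an increase); hence $u=g_+^{-1}(\Phi)$ throughout $B_R$. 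Putting $\lambda:=\lim_{|x|\uparrow R}u(x)=g_+^{-1}(\Phi(R))\ge g_+^{-1}(m_*)=\bigl(\tfrac{p-2}{q-p}\bigr)^{1/(q-2)}$ and $\phi:=u-\lambda\chi_{B_R}=\bigl(g_+^{-1}(\Phi)-\lambda\bigr)\chi_{B_R}$ produces a radial nonincreasing function with $\lim_{|x|\uparrow R}\phi=0$, H\"older continuous (since $g_+^{-1}$ is Lipschitz away from $m_*$, $\tfrac12$--H\"older at $m_*$, and $\Phi$ is H\"older) and with $\phi(0)=g_+^{-1}(\Phi(0))-g_+^{-1}(\Phi(R))>0$ because $\Phi(0)>\Phi(R)$.

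\textbf{Main obstacle.} The variational step and the branch dichotomy above are short; I expect the bulk of the effort to be the hard--analytic input they rest on --- the $L^\infty$ bound on $u$, the H\"older/Schauder bootstrap for the Riesz potential carried out uniformly in $\alpha\in(0,N)$ (handling integer resonances in the H\"older scales), and the iteration upgrading the crude decay of $u$ to the sharp rate $\tfrac{N-\alpha}{2-p}$ --- and it is precisely in excluding concentration of $\Phi$ (hence blow--up of $u$) at the origin that the standing hypothesis $q>\tfrac{2Np}{N+\alpha}$ from \eqref{A} does its work.
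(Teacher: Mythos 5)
Your overall scheme (read $u$ off the potential $\Phi=I_\alpha* u^p$, bootstrap $L^\infty$ and H\"older regularity, then use monotonicity and the shape of $g$ to pin down the support) matches the paper's, and the variational perturbation you use to show $\{u>0\}=\R^N$ for $p<2$ is a legitimate variant of the paper's Lemma~\ref{lem51} --- with the slight advantage that you argue directly on $\mathscr{P}$ rather than on the maximizing quotient, so no appeal to the maximizer/ground-state equivalence of Lemma~\ref{rescaling_1} is needed. The $L^\infty$ bound and the Schauder bootstrap you invoke are also in the spirit of Lemmas~\ref{lem34} and~\ref{smooth_support}, though both are asserted rather than carried out; the paper runs an explicit $L^s$--iteration on $u$ itself, which is cleaner than your proposed Moser-type inequality on $\Phi$.

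The genuine gap is in the $p>2$ case, where you assert that $\Phi$ is \emph{strictly} decreasing and deduce that $u$ must lie on the upper branch $g_+^{-1}(\Phi)$ throughout $B_R$. But strict radial decay of the Riesz potential of a nonincreasing density is not automatic: for a spherical shell the potential is constant inside when $\alpha=2$ and can even be increasing inside when $\alpha>2$, so strict monotonicity is a global statement that requires an argument you have not supplied. Without it your branch dichotomy only shows that either $u=g_+^{-1}(\Phi)$ throughout $B_R$, \emph{or} there is $r_0<R$ with $\Phi$ and $u$ constant on $[r_0,R]$ --- and this residual case (which the paper's Lemma~\ref{regu_p_big_12}, Steps~1--2 is entirely devoted to ruling out) is precisely the hard part. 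The paper's device is to write $u^p$ as $\lambda^p\chi_{B_R}+\phi+(\lambda_1^p-\lambda^p)\chi_{B_{r'}}$ in that scenario and invoke the explicit formula~\eqref{riesz_13bis} to see that $I_\alpha*\chi_{B_R}$ is non-constant near $\partial B_R$, which contradicts $\Phi\equiv\mathrm{const}$ there. You use the same fact later ($\Phi(0)>\Phi(R)$, giving $\phi(0)>0$) without noticing that it is the crux, not a free byproduct. To repair your proof you need to either prove the strict monotonicity of $\Phi$ on $B_R$ (e.g.\ by splitting $u^p=u^p\chi_{B_\delta}+u^p\chi_{B_\delta^c}$ and showing the near-origin piece contributes a strictly decreasing potential for $r>\delta$, plus a separate argument for small $r$), or replace the dichotomy with the paper's annulus-decomposition contradiction. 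A second, minor point: for $p=2$ your leading-order coefficient should be $\int(1-2\Phi)\phi^2$ rather than $\int(1-\Phi)\phi^2$ (using $b_0=\frac{4Na_0}{N+\alpha}$ when $p=2$), though the sign conclusion is the same because $\Phi>1$ on the set you perturb over.
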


The case $p=2$ of Theorem \ref{thm01} is well studied.
The existence and qualitative properties of the ground states of \eqref{eqTF} in the case $N=3$, $\alpha=2$, $p=2$ and $q>8/3$ is classical and goes back to \citelist{\cite{Auchmuty-Beals}\cite{Lions-81}}. The case $N\ge 2$, $\alpha\in(0,N)$ and $q\ge q_c:=2(2-\alpha/N)$ is a recent study by J.~ Carrillo et al.~\citelist{\cite{Carrillo-CalcVar}\cite{Carrillo-NA}} in the context of Keller--Siegel systems. The case $p=2$ and $q>\frac{4N}{N+\alpha}$ appeared in \cite{ZLVMeps}*{Theorem 2.6}. 
In some special cases, it is known that the a bounded radially nonincreasing ground state of \eqref{eqTF} is unique. For $p=2$ and $\alpha=2$ this follows from \cite[Lemma 5]{Flucher}.  For $p=2$ and $\alpha<2$ this is the recent result in \cite[Theorem 1.1 and Proposition 5.4]{Volzone} (see also
\citelist{\cite{Carrillo-Sugiyama}\cite{Carrillo-NA-2021}\cite{Yao-Yao}} and further references therein).
For $p\neq2$, or for $p=2$ and $\alpha>2$, the uniqueness seems to be open at present.

In the case $N=3$, $\alpha=2$, $p=2$ and $q=4$ the (unique) nonnegative radial ground state of \eqref{eqTF}
is known explicitly and is given by the function
\begin{equation}\label{eq-v03}
	u(x)=\chi_{B_\pi}(x)\sqrt{\tfrac{\sin(|x|)}{|x|}}.
\end{equation}
This is (up to the physical constants) the Thomas--Fermi approximation solution for self--gravitating BEC observed in \citelist{\cite{Wang}\cite{Bohmer-Harko}\cite{Chavanis-11}} and the support radius $R=\pi$ is the approximate radius of the BEC star.  Note that $u\not\in H^1(\R^3)$.
For $p\ge 2$ and general values of $N$, $\alpha$ and $q$ the radius of the support of a ground state of \eqref{eqTF} can be easily estimated. In particular, in  Corollaries \ref{cor-supp-0} and \ref{cor-supp-N} we show that for fixed admissible $N$, $p$ and $q$, the radius of the support of the ground states diverges to $+\infty$ as $\alpha\to 0$, and  shrinks to zero as $\alpha\to N$. In Lemma \ref{lem34} and \ref{regu_p_big_12} we obtain quantitative estimates on the H\"older continuity of the ground state near the boundary of the support when $p\ge 2$.

\subsection{Thomas--Fermi limit for the Choquard equation \eqref{eqPeps}}
Next we prove that in the  relevant asymptotic regimes, ground states $w$ of \eqref{eqPeps}
described in Theorem \ref{thm01eps} converge, after the rescaling \eqref{TF-scale},
towards a ground state of the Thomas--Fermi equation \eqref{eqTF}. To identify the asymptotic regimes, observe that
the rescaling \eqref{TF-scale} transforms the Choquard energy $\mathcal{I}_{\eps}(u)$ in such a way that
$$\mathcal{J}_{\eps}(v)=\eps^{ \frac{q(N+\alpha)-2Np}{\alpha(q-2)}}\mathcal{I}_{\eps}(u),$$
where we denote
\begin{multline}\label{eq27}
	\mathcal J_{\eps}(v):=\frac12\eps^{\frac{2(2p+\alpha)-q(2+\alpha)}{\alpha(q-2)}}\int_{\R^N}|\nabla v|^2dx+\frac12\int_{\R^N}|v|^2dx+\frac{1}{q}\int_{\R^N}|v|^qdx-\frac{1}{2p}\D_\alpha(|v|^p,|v|^p).
\end{multline}
We note that if $\eps\to 0$ and
$q<2\frac{2p+\alpha}{2+\alpha}$, or if $\eps\to\infty$ and $q>2\frac{2p+\alpha}{2+\alpha}$
then $\eps^{\frac{2(2p+\alpha)-q(2+\alpha)}{\alpha(q-2)}}\to 0$, and {\em formally} the limit energy for $\mathcal J_{\eps}$ coincides with the Thomas--Fermi energy $E$. Combined with the
existence range of the ground state of \eqref{eqPeps} in Theorem \ref{thm01eps}, this
formally identifies the Thomas--Fermi limit regimes (see Figure \ref{fig:M1}). In Section \ref{sec-limit} we prove
the following result, that confirms our reasoning based on formal asymptotics and that covers the ranges
of $\alpha\neq 2$ and $p\neq 2$ left missing in \cite[Theorem 2.7 and 3.2]{ZLVMeps}.

\begin{theorem}\label{t-TF-0}
	Let $N\ge 3$ and $\alpha\in(0,N)$. Assume that either of the following holds:
	\begin{itemize}
    \item[$(i)$] $\frac{N+\alpha}{N}<p<\frac{N+\alpha}{N-2}$ and $q>2\frac{2p+\alpha}{2+\alpha}$, or $p>\frac{N+\alpha}{N-2}$ and $q>\frac{2Np}{N+\alpha}$;\smallskip
    \item[$(ii)$] $\frac{N+\alpha}{N}<p<\frac{N+\alpha}{N-2}$ and $\frac{2Np}{N+\alpha}<q<2\frac{2p+\alpha}{2+\alpha}$.
	\end{itemize}
	Then there exists a sequence  of parameters $(\eps_k)_{k\in\N}$ with the corresponding ground states $(u_{\varepsilon_k})$ of $(P_{\varepsilon_k})$ such that 
	\begin{center}
	$\eps_k\to\infty$ if $(i)$ holds, or $\eps_k\to 0$ if $(ii)$ holds,
	\end{center}
	and the rescaled sequence of ground states of \eqref{eqPeps}
	\begin{equation*}
		u_{\eps_k}(x):=\eps_k^{-\frac{1}{q-2}}w_{\eps_k}\big(\eps_k^{-\frac{2p-q}{\alpha(q-2)}}x\big)
	\end{equation*}
	converges in $L^2(\R^N)\cap L^q(\R^N)$ to a nonnegative ground state of the Thomas-Fermi equation \eqref{eqTF}. 
	Moreover, 
	$\varepsilon_k^{\frac{2(2p+\alpha)-q(2+\alpha)}{\alpha(q-2)}} \|\nabla u_{\varepsilon_k}\|^{2}_2\rightarrow 0$ as $k\to\infty$.
\end{theorem}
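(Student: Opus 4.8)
The plan is to work on the Poho\v zaev manifold $\mathscr P_\eps$ of the rescaled equation \eqref{eqPeps-TF}, i.e.\ with the functional $\mathcal P_\eps(v):=\frac{N-2}{2}\eps^{\nu}\|\nabla v\|_2^2+\frac N2\|v\|_2^2+\frac Nq\|v\|_q^q-\frac{N+\alpha}{2p}\D_\alpha(|v|^p,|v|^p)$ and the energy $\mathcal J_\eps$ of \eqref{eq27}. Via \eqref{TF-scale}, a ground state $w_\eps$ of \eqref{eqPeps} (Theorem~\ref{thm01eps}) becomes a minimiser $u_\eps$ of $\mathcal J_\eps$ on $\mathscr P_\eps$ with $m_\eps:=\mathcal J_\eps(u_\eps)=\inf_{\mathscr P_\eps}\mathcal J_\eps$ (cf.\ \cite{ZLVMeps}); the rescaling preserves radial monotonicity, so $u_\eps$ is radial nonincreasing, and $u_\eps\in H^1(\R^N)\cap L^2(\R^N)\cap L^q(\R^N)$. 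Using $\mathcal P_\eps(u_\eps)=0$ to eliminate the Coulomb term, the least-energy level reduces to the sign-definite expression
\[
m_\eps=\frac{\alpha+2}{2(N+\alpha)}\,\eps^{\nu}\|\nabla u_\eps\|_2^2+\frac{\alpha}{2(N+\alpha)}\|u_\eps\|_2^2+\frac{\alpha}{q(N+\alpha)}\|u_\eps\|_q^q,
\]
and, in the same way, $E(v)=\frac{\alpha}{2(N+\alpha)}\|v\|_2^2+\frac{\alpha}{q(N+\alpha)}\|v\|_q^q$ for $v\in\mathscr P$. One checks that in both regimes $(i)$ and $(ii)$ one has $\eps^{\nu}\to0$ along the prescribed direction of $\eps$ (that is the content of the paper's discussion of $\nu$); set $m_0:=\inf_{\mathscr P}E=E(u_*)>0$, where $u_*$ is a ground state of \eqref{eqTF} given by Theorem~\ref{t-GN}.

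For the energy upper bound I would use a recovery family. Mollifying and truncating $u_*$ gives $u_\delta\in C_c^\infty(\R^N)$, $u_\delta\ge0$, with $u_\delta\to u_*$ in $L^2(\R^N)\cap L^q(\R^N)$, hence $E(u_\delta)\to m_0$ and $\mathcal P(u_\delta)\to0$. For fixed $\delta$, the dilations $u_\delta(\cdot/t)$ meet $\mathscr P_\eps$ at a unique $t=t_{\eps,\delta}$ which stays in a compact subset of $(0,\infty)$ and, since $\eps^{\nu}\to0$, converges to the $\mathscr P$-projection parameter $t_{0,\delta}$ of $u_\delta$, with $t_{0,\delta}\to1$ as $\delta\to0$ (because $\mathcal P(u_\delta)\to0$). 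As $\eps^{\nu}\|\nabla u_\delta(\cdot/t_{\eps,\delta})\|_2^2\to0$, we get $\mathcal J_\eps(u_\delta(\cdot/t_{\eps,\delta}))\to E(u_\delta(\cdot/t_{0,\delta}))$, so $\limsup_\eps m_\eps\le E(u_\delta(\cdot/t_{0,\delta}))$; letting $\delta\to0$ gives $\limsup_\eps m_\eps\le m_0$. In particular $\|u_\eps\|_2$, $\|u_\eps\|_q$, $\eps^{\nu}\|\nabla u_\eps\|_2^2$, and (by \eqref{eq19}) $\D_\alpha(|u_\eps|^p,|u_\eps|^p)$ are bounded.

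The heart of the argument is compactness together with the exclusion of vanishing. From $\mathcal P_\eps(u_\eps)=0$ one gets $\frac N2\|u_\eps\|_2^2+\frac Nq\|u_\eps\|_q^q\le\frac{N+\alpha}{2p}\D_\alpha(|u_\eps|^p,|u_\eps|^p)$; since the right-hand side of \eqref{eq19} is homogeneous of degree $(N+\alpha)/N>1$ in $(\|u_\eps\|_2^2,\|u_\eps\|_q^q)$, this forces $\|u_\eps\|_2^2+\|u_\eps\|_q^q\ge\delta_0>0$, and then $\D_\alpha(|u_\eps|^p,|u_\eps|^p)\le\mathscr C_{N,\alpha}\|u_\eps\|_{2Np/(N+\alpha)}^{2p}$ combined with the Poho\v zaev lower bound on $\D_\alpha$ yields $\|u_\eps\|_{2Np/(N+\alpha)}\ge c_0>0$. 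Now pick $\eps_k\to\infty$ in case $(i)$ (resp.\ $\eps_k\to0$ in case $(ii)$) along which $m_{\eps_k}$, $\|u_{\eps_k}\|_2$, $\|u_{\eps_k}\|_q$, $\eps_k^{\nu}\|\nabla u_{\eps_k}\|_2^2$ all converge, $u_{\eps_k}\rightharpoonup\bar u$ weakly in $L^2(\R^N)\cap L^q(\R^N)$, and (by Helly's selection principle, the sequence being radial nonincreasing) $u_{\eps_k}\to\bar u$ a.e. Since $2<\frac{2Np}{N+\alpha}<q$, radial nonincreasing functions bounded in $L^2(\R^N)\cap L^q(\R^N)$ obey a uniform Strauss-type pointwise bound that lies in $L^{2Np/(N+\alpha)}(\R^N)$, so dominated convergence gives $u_{\eps_k}\to\bar u$ strongly in $L^{2Np/(N+\alpha)}(\R^N)$; hence $|u_{\eps_k}|^p\to|\bar u|^p$ in $L^{2N/(N+\alpha)}(\R^N)$ and, by continuity of the Hardy--Littlewood--Sobolev bilinear form, $\D_\alpha(|u_{\eps_k}|^p,|u_{\eps_k}|^p)\to\D_\alpha(|\bar u|^p,|\bar u|^p)$. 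The lower bound on $\|u_{\eps_k}\|_{2Np/(N+\alpha)}$ shows $\bar u\ne0$.

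It remains to identify $\bar u$. Passing to the limit in $\mathcal P_{\eps_k}(u_{\eps_k})=0$, using weak lower semicontinuity of $\|\cdot\|_2^2,\|\cdot\|_q^q$ and $\eps_k^{\nu}\|\nabla u_{\eps_k}\|_2^2\ge0$, gives $\mathcal P(\bar u)\le0$. Were $\mathcal P(\bar u)<0$, the $\mathscr P$-projection of $\bar u$ would occur at some $t_0\in(0,1)$, and the reduced formula for $E$ on $\mathscr P$ plus lower semicontinuity would give $m_0\le E(\bar u(\cdot/t_0))=t_0^N\!\left(\frac{\alpha}{2(N+\alpha)}\|\bar u\|_2^2+\frac{\alpha}{q(N+\alpha)}\|\bar u\|_q^q\right)<\lim_k m_{\eps_k}\le m_0$, a contradiction. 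Hence $\bar u\in\mathscr P$ and $E(\bar u)\ge m_0$; combined with $E(\bar u)=\frac{\alpha}{2(N+\alpha)}\|\bar u\|_2^2+\frac{\alpha}{q(N+\alpha)}\|\bar u\|_q^q\le\lim_k m_{\eps_k}\le m_0$, every inequality becomes an equality, so $\bar u$ is a ground state of \eqref{eqTF}, $\|u_{\eps_k}\|_2\to\|\bar u\|_2$ and $\|u_{\eps_k}\|_q\to\|\bar u\|_q$ — which with weak convergence and uniform convexity of $L^2$ and $L^q$ upgrades to $u_{\eps_k}\to\bar u$ strongly in $L^2(\R^N)\cap L^q(\R^N)$ — and $\eps_k^{\nu}\|\nabla u_{\eps_k}\|_2^2\to0$. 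The main obstacle is exactly this compactness/vanishing step: there is no uniform $H^1$ bound on $u_\eps$ (the limit itself need not belong to $H^1$), so the usual $H^1_{\mathrm{rad}}$ compactness is unavailable and must be replaced by the radial-monotonicity machinery of Theorem~\ref{t-GN}, while vanishing is ruled out by the quantitative lower bound coming from the Poho\v zaev identity and the superlinearity of \eqref{eq19}.
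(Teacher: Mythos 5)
Your proof is correct and establishes the theorem, but it takes a genuinely different --- and in one important respect cleaner --- route than the paper for the energy upper bound $\limsup_\eps \sigma_\eps\le\sigma_*$. The paper (Lemmas~\ref{lemma_6} and \ref{lemma_6_1}) treats the easy case $u_*\in D^1(\R^N)$ (covering $p<2$ via Lemma~\ref{grad_fast}) by scaling $u_*$ itself, and for $p\ge2$ with $u_*\notin D^1$ builds a truncated family $\psi_s=\eta_s u_*$ whose gradient norm blows up as $s\to0$; it must then choose $s=1/g(\eps)$ through a carefully constructed auxiliary function $g$ (the whole point of the technical calculus Lemma~\ref{tec_lemm}) so that $\eps^\nu\|\nabla\psi_s\|_2^2\to0$ yet $g(\eps)\eps^\nu\|\nabla\psi_s\|_2^2\to\infty$, and this delicate balancing only delivers the energy convergence along a subsequence. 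You instead mollify/truncate $u_*$ to $u_\delta\in C_c^\infty$, exploit that for each \emph{fixed} $\delta$ the quantity $\|\nabla u_\delta\|_2^2$ is a fixed constant so that $\eps^\nu\|\nabla u_\delta(\cdot/t_{\eps,\delta})\|_2^2\to0$ for free as $\eps\to\infty$, and only then send $\delta\to0$; the iterated limit is legitimate because $\limsup_\eps\sigma_\eps$ does not depend on $\delta$, and it gives $\limsup_\eps\sigma_\eps\le\sigma_*$ along the full limit, bypassing Lemma~\ref{tec_lemm} entirely. This also means you do not need the boundedness, compact support, or interior smoothness of $u_*$ from Section~\ref{s-support} for the upper bound, whereas the paper's truncation estimates \eqref{eq1204}--\eqref{eq1206} rely on $u_*\in L^\infty$ with compact support. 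On the compactness side your argument and the paper's essentially coincide: both use the Poho\v zaev reduction $E(u)=\frac{\alpha}{N+\alpha}\bigl(\tfrac12\|u\|_2^2+\tfrac1q\|u\|_q^q\bigr)$ on $\mathscr P$, Strauss bounds and Helly's selection for radial nonincreasing sequences, strong $L^{2Np/(N+\alpha)}$ convergence feeding into continuity of $\D_\alpha$, and a contradiction argument from $\mathcal P(\bar u)<0$; the paper works with the rescaled $v_{\eps_k}=u_{\eps_k}(\cdot/t_{\eps_k})\in\mathscr P$ while you pass to the limit directly in $\mathcal P_{\eps_k}(u_{\eps_k})=0$, but these are equivalent. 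Your explicit non-vanishing bound via superlinear homogeneity of \eqref{eq19} is a small addition (the paper implicitly gets it from $\sigma_\eps>\sigma_*>0$), and your final upgrade from weak plus norm convergence to strong convergence via uniform convexity is an alternative to the paper's Brezis--Lieb step. Overall, the simplification of the upper-bound step is worth noting.
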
 

Location of limit regimes $(i)$ and $(ii)$ on the $(p,q)$ plane is outlined in Figure \ref{fig:M1}.
Note that typically, the limit ground state of \eqref{eqTF} is not in $H^1(\R^N)$ and the quantity $\|\nabla u_{\varepsilon_k}\|^{2}_2$ in Theorem \ref{t-TF-0} blows up as $k\to\infty$. The qualitative properties and H\"older regularity of the ground state of \eqref{eqTF} established in Theorem \ref{thm01} become crucial in the analysis of Thomas--Fermi convergence in Theorem \ref{t-TF-0}.

\begin{figure}[t]
	\centering

	\tikzset{every picture/.style={line width=0.75pt}} 
	
	\begin{tikzpicture}[x=0.75pt,y=0.75pt,yscale=-0.75,xscale=0.75]
		
		
		
		\tikzset{
			pattern size/.store in=\mcSize,
			pattern size = 5pt,
			pattern thickness/.store in=\mcThickness,
			pattern thickness = 0.3pt,
			pattern radius/.store in=\mcRadius,
			pattern radius = 1pt}
		\makeatletter
		\pgfutil@ifundefined{pgf@pattern@name@_ro2tk9cc0}{
			\pgfdeclarepatternformonly[\mcThickness,\mcSize]{_ro2tk9cc0}
			{\pgfqpoint{0pt}{-\mcThickness}}
			{\pgfpoint{\mcSize}{\mcSize}}
			{\pgfpoint{\mcSize}{\mcSize}}
			{
				\pgfsetcolor{\tikz@pattern@color}
				\pgfsetlinewidth{\mcThickness}
				\pgfpathmoveto{\pgfqpoint{0pt}{\mcSize}}
				\pgfpathlineto{\pgfpoint{\mcSize+\mcThickness}{-\mcThickness}}
				\pgfusepath{stroke}
		}}
		\makeatother
		\tikzset{every picture/.style={line width=0.75pt}} 

		\draw[pattern={Lines[angle=45,distance=3pt,line width=0.5pt]}, pattern color=lightgray] 
		(310,310) -- (150,470) -- (150,130) -- (460,130)-- cycle;
		
		
		\draw[pattern={Lines[angle=135,distance=2pt, line width=0.4pt]}, pattern color=lightgray] (310,310) -- (150,510) -- (150,470)-- cycle;


		\draw [line width=1.5]  (71.07,510) -- (476.43,510)(110,112.97) -- (110,551.94) (469.43,505) -- (476.43,510) -- (469.43,515) (105,119.97) -- (110,112.97) -- (115,119.97)  ;


		\draw [color={rgb, 255:red, 126; green, 211; blue, 33 }  ,draw opacity=1 ][line width=1.0] (110,510) -- (460,160) ;
		

		\draw [color={rgb, 255:red, 208; green, 2; blue, 27 }  ,draw opacity=1 ][fill={rgb, 255:red, 234; green, 178; blue, 85 }  ,fill opacity=1 ][line width=1]    (150,130) -- (150,510) ;


		\draw [color={rgb, 255:red, 208; green, 2; blue, 27 }  ,draw opacity=1 ][line width=1]    (310,310) -- (310,510) ;

		%
		
		

		
		
		\draw [color={rgb, 255:red, 208; green, 2; blue, 27 }  ,draw opacity=1 ][line width=1]    (150,510) -- (310,310) ;
		%
		
		\draw [color={rgb, 255:red, 208; green, 2; blue, 27 }  ,draw opacity=1 ][line width=1]    (460,130) -- (310,310) ;
		\draw [shift={(310,310)}, rotate = 129.81] [color={rgb, 255:red, 208; green, 2; blue, 27 }  ,draw opacity=1 ][fill={rgb, 255:red, 208; green, 2; blue, 27 }  ,fill opacity=1 ][line width=1]      (0, 0) circle [x radius= 3.35, y radius= 3.35]   ;
		

		\draw (145,525) node [scale=0.9]  {$\frac{N+\alpha }{N}$};
		\draw (100,111) node [scale=0.9]  {$q$};
		\draw (467.5,526) node [scale=0.9]  {$p$};
		\draw (117.5,519) node [scale=0.9]  {$1$};
		\draw (102.5,499) node [scale=0.9]  {$2$};
		\draw (305,525) node [scale=0.9]  {$\frac{N+\alpha }{N-2}$};
		\draw  [fill={rgb, 255:red, 74; green, 144; blue, 226 }  ,fill opacity=1 ]  (300.5, 201.5) circle [x radius= 18.44, y radius= 18.26]   ;
		\draw (300.5,201.5) node [scale=0.9]  {$i$};
		\draw (510,155) node [scale=0.9]  {$q=2\frac{2p+\alpha }{2+\alpha}$};
		\draw (505,125) node [scale=0.9]  {$q=\frac{2Np}{N+\alpha}$};
		
		\draw (527,428) node [scale=0.6] [align=left] {\huge{$\boxed{(ii):\eps\to 0}$}};
		\draw (527,350) node [scale=0.6] [align=left] {\huge{$\boxed{(i):\eps\to \infty}$}};

		\draw  [fill={rgb, 255:red, 248; green, 231; blue, 28 }  ,fill opacity=1 ][line width=0.75]   (168, 469) circle [x radius= 13, y radius= 11]   ;
		\draw (168,469) node [scale=0.9]  {$ii$};


	\end{tikzpicture}
	
\caption{Thomas--Fermi limit regimes $(i)$ and $(ii)$ in Theorem~\ref{t-TF-0}} \label{fig:M1}
\end{figure}
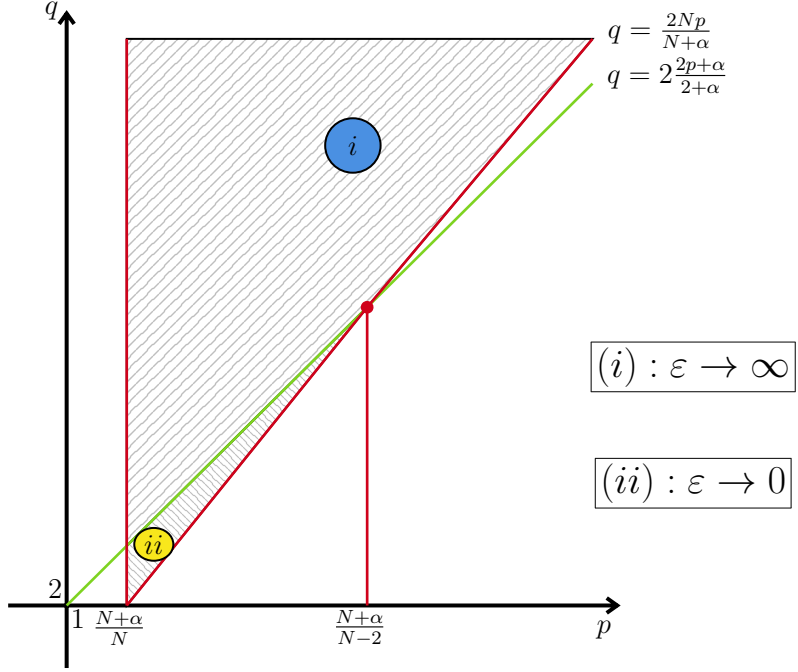

\subsection*{Notations}\label{s-notation}

For real valued nonnegative functions $f(t), g(t)$ defined on a subset of $\R_+$, we write:
\smallskip

$f(t)\lesssim g(t)$ if there exists $C>0$ independent of $t$
such that $f(t) \le C g(t)$;

$f(t)\gtrsim g(t)$ if $g(t)\lesssim f(t)$;

$f(t)\sim g(t)$ if $f(t)\lesssim g(t)$ and $f(t)\gtrsim g(t)$.


\smallskip
\noindent
Bearing in mind that $f(t),g(t) \geq 0$, we write $f(t)=\O(g(t))$ if $f(t)\sim g(t)$, and $f(t)=o(g(t))$ if $\lim\frac{f(t)}{g(t)}=0$.
As usual, $B_R=\{x\in\R^N:|x|<R\}$ and $|B_R|$ is the volume of $B_R$. By $C,c,c_1$ etc. we denote generic positive constants whose value may change from line to line.

\section{Proof of Theorem \ref{t-GN}}\label{sec04}

In what follows, unless specified otherwise, we always assume that $N\ge 1$ and $\alpha\in(0,N)$.

\subsection{Existence of an optimiser}
For $u\in L^q(\R^N)\cap L^2(\R^N)$, denote the quotient
$$
\mathcal{R}_\alpha(u):=\frac{\D_\alpha(|u|^p,|u|^p)}{\|u\|_2^{2p\theta}\|u\|_q^{2p(1-\theta)}} .
$$
We are going to show that the best constant
\begin{equation}\label{eq16}
  \mathscr{C}_{N,\alpha,p,q}=\sup\Big\{\mathcal{R}_\alpha(u): u\in L^q(\R^N)\cap L^2(\R^N), u\neq 0\Big\}
\end{equation}
is achieved, following(with some modifications the arguments in \cite{Frank}*{Proposition 8}.

\begin{lemma}\label{t-GN-existence}
	Let  $p>\frac{N+\alpha}{N}$ and $q>\frac{2Np}{N+\alpha}$.
	Then there exists a nonnegative radial nonincreasing maximizer $u\in L^2(\R^N)\cap L^q(\R^N)$ for $\mathscr{C}_{N,\alpha,p,q}$.
\end{lemma}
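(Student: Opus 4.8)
The plan is to prove existence of a maximizer for $\mathscr{C}_{N,\alpha,p,q}$ by the concentration–compactness–free direct method, exploiting the full dilation, translation and scaling invariance of the quotient $\mathcal R_\alpha$ to reduce to a compact situation. First I would take a maximizing sequence $(u_k)$ with $\mathcal R_\alpha(u_k)\to\mathscr{C}_{N,\alpha,p,q}$, and replace each $u_k$ by its symmetric decreasing rearrangement $u_k^*$: by the Riesz rearrangement inequality the Coulomb energy $\D_\alpha(|u|^p,|u|^p)$ does not decrease, while the $L^2$ and $L^q$ norms are preserved, so $(u_k^*)$ is still maximizing and consists of nonnegative radial nonincreasing functions. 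Next, using the two free parameters (dilation $u(x)\mapsto u(x/\lambda)$ and multiplication $u\mapsto \mu u$) I would normalize so that $\|u_k\|_2 = \|u_k\|_q = 1$ for all $k$; the quotient is then just $\D_\alpha(|u_k|^p,|u_k|^p)$, which is bounded above by \eqref{eq19}.

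The second step is to extract a pointwise limit. Since the $u_k$ are radial, nonincreasing and bounded in $L^2\cap L^q$, the Strauss-type radial estimate gives a uniform bound $u_k(x)\lesssim |x|^{-N/q}$ away from the origin (and $u_k(x)\lesssim |x|^{-N/2}$ near it), so the sequence is uniformly bounded on every compact subset of $\R^N\setminus\{0\}$; Helly's selection theorem for monotone functions then yields a subsequence converging pointwise a.e.\ to a nonnegative radial nonincreasing function $u_*$, which by Fatou lies in $L^2\cap L^q$ with $\|u_*\|_2\le1$, $\|u_*\|_q\le1$. The crux is then to show (a) $u_*\not\equiv 0$, and (b) $\D_\alpha(|u_k|^p,|u_k|^p)\to\D_\alpha(|u_*|^p,|u_*|^p)$, i.e.\ no loss of Coulomb energy. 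For (a), if $u_*\equiv0$ the radial bounds would force $u_k\to0$ locally uniformly on $\R^N\setminus\{0\}$, and one shows this makes $\D_\alpha(|u_k|^p,|u_k|^p)\to0$ (splitting the double integral into near-diagonal, near-origin and tail pieces and using the HLS bound together with the uniform $L^{2Np/(N+\alpha)}$ control and vanishing), contradicting $\D_\alpha(|u_k|^p,|u_k|^p)\to\mathscr{C}_{N,\alpha,p,q}>0$. For (b), a Brezis–Lieb / splitting argument for the bilinear Coulomb form shows $\D_\alpha(|u_k|^p,|u_k|^p) = \D_\alpha(|u_*|^p,|u_*|^p) + \D_\alpha(|u_k-u_*|^p,|u_k-u_*|^p) + o(1)$ (here $p>\frac{N+\alpha}{N}$ is exactly what makes $|u_k|^p$ bounded in $L^{2N/(N+\alpha)}$ so HLS applies and the Brezis–Lieb lemma is available), and combining this with the interpolation inequality applied to $u_k-u_*$, whose $L^2$ and $L^q$ norms are $\le1+o(1)$, forces either $u_k-u_*\to0$ in the relevant norms or a strict-subadditivity contradiction with maximality.

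Once no Coulomb energy is lost, one gets $\D_\alpha(|u_*|^p,|u_*|^p)\ge\mathscr{C}_{N,\alpha,p,q}$ while $\|u_*\|_2^{2p\theta}\|u_*\|_q^{2p(1-\theta)}\le1$, so $\mathcal R_\alpha(u_*)\ge\mathscr{C}_{N,\alpha,p,q}$, hence $u_*$ is a maximizer (and automatically $\|u_*\|_2=\|u_*\|_q=1$). I would close by noting $u_*$ is nonnegative, radial and nonincreasing by construction, which is the assertion of Lemma \ref{t-GN-existence}.

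The main obstacle I anticipate is controlling the Coulomb term under only pointwise (not strong $L^{2Np/(N+\alpha)}$) convergence: unlike a single power nonlinearity, the double integral couples far-apart regions, so ruling out vanishing and, above all, establishing the Brezis–Lieb-type splitting for $\D_\alpha(|u_k|^p,|u_k|^p)$ requires care — one must use the radial decay bounds to localize mass, verify that $|u_k|^p - |u_*|^p - |u_k-u_*|^p \to 0$ in $L^{2N/(N+\alpha)}$ (this is where $p>\frac{N+\alpha}{N}$, equivalently $\frac{2N}{N+\alpha}\cdot p > 2 \ge 1$ giving the needed integrability, enters decisively), and then pass to the limit in the bilinear form via HLS. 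The scaling normalization and the strict subadditivity of $\lambda\mapsto$ (optimal constant) under splitting of mass are the standard devices that convert this into a contradiction-free conclusion.
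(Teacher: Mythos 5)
Your proposal follows the same skeleton as the paper's proof (symmetric decreasing rearrangement, normalization $\|u_k\|_2=\|u_k\|_q=1$, Strauss radial bounds, Helly's selection for a.e.\ convergence, Fatou, and the final maximality argument), and the final paragraph's argument to show $\|u_*\|_2=\|u_*\|_q=1$ and conclude maximality is exactly what the paper does. The conclusion is correct. However, you over-engineer the crucial compactness step, and in doing so you miss the key observation that makes the paper's argument short. The Strauss dominator $U(x)=C\min\{|x|^{-N/2},|x|^{-N/q}\}$ lies in $L^s(\R^N)$ precisely for $s\in(2,q)$, and the standing hypotheses $p>\frac{N+\alpha}{N}$, $q>\frac{2Np}{N+\alpha}$ place the HLS exponent $\frac{2Np}{N+\alpha}$ strictly inside $(2,q)$. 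Hence the Lebesgue dominated convergence theorem applied with the dominator $U$ gives \emph{strong} convergence $u_k\to u_*$ in $L^s$ for every $s\in(2,q)$, in particular in $L^{2Np/(N+\alpha)}$. Once you have that, $\D_\alpha(|u_k|^p,|u_k|^p)\to\D_\alpha(|u_*|^p,|u_*|^p)$ follows from continuity of the Coulomb form under HLS (the paper cites a nonlocal Brezis--Lieb lemma, but with the strong convergence in hand this is immediate), and nontriviality $u_*\not\equiv 0$ falls out for free because $\D_\alpha(|u_*|^p,|u_*|^p)=\mathscr{C}_{N,\alpha,p,q}>0$. Your anticipated ``main obstacle'' --- ``controlling the Coulomb term under only pointwise (not strong $L^{2Np/(N+\alpha)}$) convergence'' --- is therefore not an obstacle at all: the strong $L^{2Np/(N+\alpha)}$ convergence is already available, so the separate vanishing dichotomy in your point (a), the Brezis--Lieb splitting in (b), and the strict-subadditivity argument are all unnecessary detours (though they could be made to work). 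This is the real reason the paper singles out the interval $(2,q)$ and the inequality $q>\frac{2Np}{N+\alpha}>p$: it positions the HLS exponent inside the window where the Strauss bounds give integrable domination.
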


\proof
Let  $(u_n)\subset L^2(\R^N)\cap L^q(\R^N)$ be a maximizing sequence, such that $\mathcal{R}_\alpha(u_n)\to \C_{N,\alpha,p,q}$ as $n\to \infty$.
Let $u^*_n$ denote the Schwartz spherical rearrangements of $|u_n|$.
Then $u^*_n$ is nonnegative radially symmetric nonincreasing, and
	\begin{equation}\label{eq32}
	\| u_n\|_2^2=\| u^*_n\|_2^2, \quad \| u_n\|_q^q=\| u^*_n\|_q^q, \quad \D_\alpha(|u_n|^p,|u_n|^p)\leq \D_\alpha((u_n^*)^p,(u_n^*)^p).
	\end{equation}
	Therefore $\mathcal{R}_\alpha(u_n)\leq \mathcal{R}_\alpha(u^*_n)$ and $(u_n^*)$ is also a maximizing sequence of $\C_{N,\alpha,p,q}$.
	Without loss of generality, we can denote $u_n^*$ by $u_n$ in the rest of the proof.
	
	By using the scaling invariance and homogeneity of  $\mathcal{R}_\alpha$ we can assume that $\|u_n\|_2=\|u_n\|_q=1$, so that $$\mathcal{R}_\alpha(u_n)=\D_\alpha(u_n^p,u_n^p)\to \C_{N,\alpha,p,q}$$
	as $n\to \infty$.  Using Strauss' $L^s$--bounds \cite{Strauss} with $s=2$ and $s=q$, we conclude that
	$$u_n(x)\leq U(x):=C\min\{|x|^{-N/2}, |x|^{-N/q}\}.$$

	Since $U\in L^s(\bbr^N)$ for $s\in(2,q)$, by Helly's selection principle there exists
  $0\leq u\in L^2(\R^N)\cap L^q(\R^N)$ such that $u_n(x)\to u(x)$ a.e.~in $\bbr^N$ as $n\to \infty$.  By the Lebesgue dominated convergence theorem, we see that for $s\in (2, q)$,
	$$
	\lim_{n\to\infty}\int_{\bbr^N}|u_n|^sdx=\int_{\bbr^N}|u|^sdx.
	$$
From the restriction $q>\frac{2Np}{N+\alpha}>p$, by the nonlocal Brezis--Lieb Lemma \cite{MMS16}*{Proposition 4.7}, we conclude that
	$$
	\lim_{n\to\infty}\D_\alpha(u_n^p,u_n^p)=\D_\alpha(u^p,u^p)=\C_{N,\alpha,p,q}.
	$$
	By Fatou's Lemma, we get that
	$$
	1=\lim_{n\to\infty}\int_{\bbr^N}|u_n|^2dx\geq \int_{\bbr^N}|u|^2dx>0, \quad  1=\lim_{n\to\infty}\int_{\bbr^N}|u_n|^qdx\geq \int_{\bbr^N}|u|^qdx>0.
	$$
	We claim that $\int_{\bbr^N}|u|^2dx=\int_{\bbr^N}|u|^qdx=1$.   Otherwise, if  $\|u\|_2\|u\|_q<1$, then
	$$
	\C_{N,\alpha,p,q}\geq \mathcal{R}_\alpha(u) >\D_\alpha(u^p,u^p)=\lim_{n\to\infty}\D_\alpha(u_n^p,u_n^p)=\lim_{n\to\infty}\mathcal{R}_\alpha(u_n)=\C_{N,\alpha,p,q},
	$$
	a contradiction.  Therefore, our claim holds and
	$$
	\lim_{n\to\infty}\int_{\bbr^N}|u_n|^2dx=\int_{\bbr^N}|u|^2dx, \quad \lim_{n\to\infty}\int_{\bbr^N}|u_n|^qdx=\int_{\bbr^N}|u|^qdx,
	$$
	that is, $u_n$ converges to $u$ strongly in $L^s(\bbr^N)$ for $s\in [2, q]$, and $\C_{N,\alpha,p,q}=\mathcal{R}_\alpha(u)$.  
\qed

Next we briefly discuss the asymptotic behaviours of the optimal constant $\C_{N,\alpha,p,q}$ when $\alpha$ approaches $0$ or $N$. 
\begin{proposition}\label{limit_alpha}
	Assume that $N\ge 1$, $\alpha\in(0,N)$.
	If $p>1$ and $q\ge 2p$ then
	\begin{equation}\label{lim_1}
		\lim_{\alpha\to 0} \mathscr{C}_{N,\alpha,p,q}=1. 
	\end{equation}
	Furthermore, if $p\ge 2$ and $q>p$ then
	\begin{equation}\label{lim_2}
		\lim_{\alpha\to N} A^{-1}_{\alpha}\mathscr{C}_{N,\alpha,p,q}=1.
	\end{equation}
\end{proposition}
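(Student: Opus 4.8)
The plan is to estimate $\mathscr{C}_{N,\alpha,p,q}$ from above and below by elementary quantities whose limits as $\alpha\to 0$ and $\alpha\to N$ are easily computed. For the upper bound, I would start from the chain of inequalities already recorded in \eqref{eq19}: one has $\D_\alpha(|u|^p,|u|^p)\le \mathscr{C}_{N,\alpha}\|u\|_{2Np/(N+\alpha)}^{2p}\le\mathscr{C}_{N,\alpha}\|u\|_2^{2p\theta}\|u\|_q^{2p(1-\theta)}$ by Hardy--Littlewood--Sobolev followed by interpolation, so $\mathscr{C}_{N,\alpha,p,q}\le\mathscr{C}_{N,\alpha}$, where $\mathscr{C}_{N,\alpha}=\frac{\Gamma((N-\alpha)/2)}{2^\alpha\pi^{\alpha/2}\Gamma((N+\alpha)/2)}\big(\frac{\Gamma(N)}{\Gamma(N/2)}\big)^{\alpha/N}$ is the sharp HLS constant. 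A direct computation with the Gamma function shows $\mathscr{C}_{N,\alpha}\to 1$ as $\alpha\to 0$ (all the $\alpha$-dependent factors tend to $1$, using $\Gamma((N-\alpha)/2)/\Gamma((N+\alpha)/2)\to 1$), and $A_\alpha^{-1}\mathscr{C}_{N,\alpha}\to 1$ as $\alpha\to N$, since $A_\alpha=\frac{\Gamma((N-\alpha)/2)}{\pi^{N/2}2^\alpha\Gamma(\alpha/2)}$, so that $A_\alpha^{-1}\mathscr{C}_{N,\alpha}=\frac{\pi^{N/2}\Gamma(\alpha/2)}{\pi^{\alpha/2}\Gamma((N+\alpha)/2)}\big(\frac{\Gamma(N)}{\Gamma(N/2)}\big)^{\alpha/N}\to 1$ as $\alpha\to N$. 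This gives $\limsup_{\alpha\to 0}\mathscr{C}_{N,\alpha,p,q}\le 1$ and $\limsup_{\alpha\to N}A_\alpha^{-1}\mathscr{C}_{N,\alpha,p,q}\le 1$.

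For the matching lower bounds I would test the quotient on a convenient fixed family. Choose, say, $v=\chi_{B_r}$ the characteristic function of a ball of radius $r>0$ (this lies in $L^2\cap L^q$ for every $q>0$, which is why the hypotheses only ask $q\ge 2p$ resp. $q>p$ to control $\theta\in(0,1)$). Then $\|v\|_2^2=\|v\|_q^q=|B_r|$, and $\D_\alpha(\chi_{B_r}^p,\chi_{B_r}^p)=A_\alpha\iint_{B_r\times B_r}|x-y|^{\alpha-N}\,dx\,dy$. The point is that as $\alpha\to 0$ one has $A_\alpha|x-y|^{\alpha-N}\to \delta$-like behaviour: more precisely $A_\alpha\to 0$ but $A_\alpha\iint_{B_r\times B_r}|x-y|^{\alpha-N}\,dx\,dy\to |B_r|$, because $I_\alpha\to\delta_0$ in the sense of distributions as $\alpha\to 0^+$ (this is standard for the Riesz kernel, and can be verified directly by splitting the integral into $|x-y|<\rho$ and $|x-y|\ge\rho$ and letting first $\alpha\to 0$, then $\rho\to 0$). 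Plugging into the quotient $\mathcal R_\alpha(v)=\D_\alpha(v^p,v^p)/(\|v\|_2^{2p\theta}\|v\|_q^{2p(1-\theta)})$ and using $\|v\|_2^{2p\theta}\|v\|_q^{2p(1-\theta)}=|B_r|^{p\theta}|B_r|^{p(1-\theta)}=|B_r|^p$ — wait, one must be careful: $\|v\|_q^{2p(1-\theta)}=|B_r|^{2p(1-\theta)/q}$, so the denominator equals $|B_r|^{p\theta+2p(1-\theta)/q}$, and by \eqref{eq20} this exponent is exactly $2\cdot\frac{N+\alpha}{2Np}\cdot p=\frac{N+\alpha}{N}$, so the denominator is $|B_r|^{(N+\alpha)/N}$. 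Hence $\mathcal R_\alpha(\chi_{B_r})=\frac{A_\alpha\iint_{B_r\times B_r}|x-y|^{\alpha-N}}{|B_r|^{(N+\alpha)/N}}$, and choosing $r$ so that $|B_r|=1$ gives $\mathcal R_\alpha(\chi_{B_r})=A_\alpha\iint_{B_r\times B_r}|x-y|^{\alpha-N}\,dx\,dy\to 1$ as $\alpha\to 0$. This yields $\liminf_{\alpha\to 0}\mathscr{C}_{N,\alpha,p,q}\ge 1$ and completes \eqref{lim_1}.

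For \eqref{lim_2} I would argue in parallel but now exploit that as $\alpha\to N$ the normalised kernel $A_\alpha^{-1}\cdot A_\alpha|x-y|^{\alpha-N}=|x-y|^{\alpha-N}\to 1$ locally uniformly on $B_r\times B_r$ away from the diagonal, while the singularity $|x-y|^{\alpha-N}$ becomes integrable and in fact $|x-y|^{\alpha-N}\to 1$ pointwise; so by dominated convergence $A_\alpha^{-1}\D_\alpha(\chi_{B_r}^p,\chi_{B_r}^p)=\iint_{B_r\times B_r}|x-y|^{\alpha-N}\,dx\,dy\to |B_r|^2$. With $|B_r|=1$ this gives $A_\alpha^{-1}\mathcal R_\alpha(\chi_{B_r})=\frac{\iint_{B_r\times B_r}|x-y|^{\alpha-N}}{|B_r|^{(N+\alpha)/N}}\to 1$, hence $\liminf_{\alpha\to N}A_\alpha^{-1}\mathscr{C}_{N,\alpha,p,q}\ge 1$, which together with the upper bound proves \eqref{lim_2}. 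I expect the only delicate point to be justifying the two kernel limits $A_\alpha\iint_{B_r\times B_r}|x-y|^{\alpha-N}\to|B_r|$ (as $\alpha\to 0$) and $\iint_{B_r\times B_r}|x-y|^{\alpha-N}\to|B_r|^2$ (as $\alpha\to N$) rigorously and uniformly enough; both follow from controlling the near-diagonal contribution $\iint_{|x-y|<\rho}$, using that $A_\alpha\int_{|z|<\rho}|z|^{\alpha-N}\,dz=A_\alpha\,\omega_{N-1}\rho^\alpha/\alpha$ stays bounded (and $\to$ a small quantity after first sending $\alpha\to 0$ or is $O(\rho^N)$ when $\alpha\to N$), so that the singular part is negligible and the rest converges by dominated convergence. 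An alternative to $\chi_{B_r}$ that avoids even this is to test on a fixed smooth compactly supported $v$ and invoke $I_\alpha\to\delta_0$ resp. the explicit constant, but the characteristic function keeps the norms trivially computable.
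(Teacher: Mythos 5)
Your proposal is correct and follows the same two-sided framework as the paper: upper bound via $\mathscr{C}_{N,\alpha,p,q}\le\mathscr{C}_{N,\alpha}$ plus explicit $\Gamma$-asymptotics (identical), lower bound via the trial function $\chi_{B_r}$. Where you diverge is in how you evaluate the lower-bound limits. The paper writes out $I_\alpha*\chi_{B_1}$ explicitly in terms of the Gauss hypergeometric function \eqref{riesz_13bis}, substitutes this into \eqref{lower_sup}, and then invokes Fatou's lemma using $\lim_{\alpha\to 0}{}_2F_1(-\tfrac{\alpha}{2},\tfrac{N-\alpha}{2};\tfrac{N}{2};|x|^2)=1$ and the corresponding limit as $\alpha\to N$. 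You instead work directly with the double integral $A_\alpha\iint_{B_r\times B_r}|x-y|^{\alpha-N}\,dx\,dy$: for $\alpha\to 0$ you exploit that $A_\alpha|z|^{\alpha-N}$ acts like an approximate identity (since $A_\alpha\int_{|z|<\rho}|z|^{\alpha-N}dz=A_\alpha\omega_{N-1}\rho^\alpha/\alpha\to 1$), giving the limit $|B_r|$; for $\alpha\to N$ you use pointwise convergence $|x-y|^{\alpha-N}\to 1$ and dominated convergence to get $|B_r|^2$. Your normalisation $|B_r|=1$ makes the arithmetic transparent, and your observation that the denominator exponent collapses to $(N+\alpha)/N$ is exactly the computation that makes everything line up. Your route avoids special functions entirely and is arguably more self-contained; the paper's route has the side benefit of reusing formula \eqref{riesz_13bis} which is needed again later (e.g.\ in Lemma \ref{regu_p_big_12} and Corollary \ref{cor-supp-0}). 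One small omission: you should note, as the paper does at the start of its proof, that the hypotheses $p>1$, $q\ge 2p$ (resp.\ $p\ge 2$, $q>p$) guarantee $p>\tfrac{N+\alpha}{N}$ and $q>\tfrac{2Np}{N+\alpha}$ for $\alpha$ near $0$ (resp.\ near $N$), so that $\mathscr{C}_{N,\alpha,p,q}$ is well defined in the regime where the limits are taken — this is slightly more than just "$\theta\in(0,1)$".
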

\begin{proof}
	First of all we notice that $p>1$ implies $p>\frac{N+\alpha}{N}$ for every $\alpha$ sufficiently close to zero, and $2p>\frac{2Np}{N+\alpha}$ for every
  $\alpha\in (0,N)$. Similarly, the assumption $q>p$ ensures that $q>\frac{2Np}{N+\alpha}$ for every $\alpha$ sufficiently close to $N$, and clearly $p \geq
  2>\frac{N+\alpha}{N}$. Thus, under our assumptions on $p$ and $q$, if $\alpha$ is sufficiently close to zero or $N$, the optimal constant $\C_{N,\alpha,p,q}$ is well defined. First, we prove that 
  $$\limsup_{\alpha\to 0}\mathscr{C}_{N,\alpha,p,q}\le 1.$$
	By the HLS inequality and standard properties of the Gamma function, we conclude that 
	\begin{equation}\label{su_sup}
		\mathscr{C}_{N,\alpha,p,q}\le  \mathscr{C}_{N,\alpha}=(2\sqrt{\pi})^{-\alpha}\frac{\Gamma\left(\frac{N-\alpha}{2}\right)}{\Gamma\left(\frac{N+\alpha}{2}\right)}\left(\frac{\Gamma(N)}{\Gamma\left(\frac{N}{2}\right)}\right)^{\frac{\alpha}{N}}\to 1\quad \text{as}\ \alpha\to 0,
	\end{equation}
  where $\mathscr{C}_{N,\alpha}$ is the sharp constant in the HLS inequality \eqref{eq19}. 
	
  To derive a lower bound of $\mathscr{C}_{N,\alpha,p,q}$ we take $u=\chi_{B_1}$ as the trial function. Then, by the explicit expression    for the Riesz potential of a characteristic function given in Eq. \eqref{riesz_13bis} below, 
	\begin{equation}\label{lower_sup}
		\begin{split}
			\mathscr{C}_{N,\alpha,p,q}\ge |B_1|^{-p\theta-\frac{2p(1-\theta)}{q}} \frac{\Gamma((N-\alpha)/2)}{2^\alpha\Gamma(1+\alpha/2)\Gamma(N/2)}\int_{B_1}
			{}_{2}F_1(-\alpha/2,(N-\alpha)/2;N/2;|x|^2)dx,
		\end{split}
	\end{equation}
	where $\theta=\theta(\alpha)$ is defined in Eq. \eqref{eq20}.
	Next we note the following limits (with fixed $p, q$),
	\begin{equation*}
		\begin{split}
			&\lim_{\alpha\to 0} \left(p\theta(\alpha)+\frac{2p(1-\theta(\alpha))}{q}\right)=1;\\
      & \lim_{\alpha\to 0} 	{}_{2}F_1(-\alpha/2,(N-\alpha)/2;N/2;|x|^2)=1,\quad \text{for\ every}\ |x|<1,
		\end{split}
	\end{equation*}
	Thus Fatou's lemma yields
	\begin{equation*}
		\liminf_{\alpha\to 0} \mathscr{C}_{N,\alpha,p,q}\ge |B_1|^{-1}\int_{B_1}\liminf_{\alpha\to 0}	{}_{2}F_1(-\alpha/2,(N-\alpha)/2;N/2;|x|^2) dx=1,
	\end{equation*}
	which concludes the proof of \eqref{lim_1}.
	
	To derive the limit \eqref{lim_2}, we notice that 
	\begin{equation*}
		\begin{split}
			&\lim_{\alpha\to N} \left(p\theta(\alpha)+\frac{2p(1-\theta(\alpha))}{q}\right)=2;\\
      & \lim_{\alpha\to N} 	{}_{2}F_1(-\alpha/2,(N-\alpha)/2;N/2;|x|^2)=1,\quad \text{for\ every}\ |x|<1,
		\end{split}
	\end{equation*}
	with the same $\theta=\theta(\alpha)$  as in Eq. \eqref{eq20}.
	Hence, by Eq. \eqref{lower_sup} and Fatou's Lemma we deduce 
	\begin{equation*}
		\begin{split}
			\liminf_{\alpha\to N} A^{-1}_{\alpha}\mathscr{C}_{N,\alpha,p,q}\ge |B_1|^{-1} \pi ^{\frac{N}{2}}\liminf_{\alpha\to N}\frac{1}{\Gamma(1+\frac{\alpha}{2})}=|B_1|^{-1} \frac{\pi^{\frac{N}{2}}}{\Gamma(1+\frac{N}{2})}=1.
		\end{split}
	\end{equation*}
Finally, similarly \eqref{su_sup} and using the explicit form of $A_{\alpha}$ and $\mathscr{C}_{N,\alpha}$ we obtain
	$$\limsup_{\alpha\to N}  A^{-1}_{\alpha}\mathscr{C}_{N,\alpha,p,q}\leq
	\limsup_{\alpha\to N}  A^{-1}_{\alpha}\mathscr{C}_{N,\alpha} \leq 1,$$
	which concludes the proof of \eqref{lim_2}.
\end{proof}

In Section \ref{s-support} we will show  that if $p\ge 2$ then maximizers for $\mathscr{C}_{N,\alpha,p,q}$ have compact support. Estimates in Proposition \ref{limit_alpha} will be then used to estimate the radius of support of the maximizers as $\alpha\to 0$ and $\alpha\to N$.

\subsection{Maximizer has full support if $p<2$}
Our next observation is that in the case $p<2$ maximizers for $\mathscr{C}_{N,\alpha,p,q}$ have full support $\R^N$.

\begin{lemma}\label{lem51}
	Assume that	$p>\frac{N+\alpha}{N}$ and $q>\frac{2Np}{N+\alpha}$.
	Let $u\in L^2(\R^N)\cap L^q(\R^N)$ be a nonnegative radial nonincreasing maximizer for $\mathscr{C}_{N,\alpha,p,q}$. 
	If $p<2$ then $\mathrm{Supp}(u)=\R^N$.
\end{lemma}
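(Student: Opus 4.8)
The plan is to argue by contradiction via a direct comparison of the Gagliardo--Nirenberg quotient $\mathcal R_\alpha$. Suppose $\mathrm{Supp}(u)\neq\R^N$; since $u$ is radial and nonincreasing, this forces $\mathrm{Supp}(u)=\overline{B_R}$ for some finite $R>0$, so the complement $\R^N\setminus\overline{B_R}$ is a nonempty open set of positive Lebesgue measure. Using the scale invariance of $\mathcal R_\alpha$ under $u\mapsto\lambda u(\cdot/\mu)$ (which, since $q\neq 2$, lets us prescribe both $\|u\|_2$ and $\|u\|_q$ at once), I would first normalize so that $\|u\|_2=\|u\|_q=1$ and set $C:=\mathscr{C}_{N,\alpha,p,q}=\D_\alpha(u^p,u^p)$.

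The heart of the argument is to inject a small amount of mass into the region where $u$ vanishes. Pick a bump $0\le\varphi\in L^2(\R^N)\cap L^q(\R^N)$, $\varphi\not\equiv0$, with $\mathrm{supp}(\varphi)\subset\R^N\setminus\overline{B_R}$ (for instance $\varphi=\chi_{B_{2R}\setminus\overline{B_R}}$), so that $u\varphi\equiv0$ and therefore $(u+t\varphi)^p=u^p+t^p\varphi^p$ a.e.\ for every $t>0$. Then $u+t\varphi$ is an admissible competitor, and by the bilinearity of $\D_\alpha$,
\[
\D_\alpha\big((u+t\varphi)^p,(u+t\varphi)^p\big)=C+2t^p\,\D_\alpha(u^p,\varphi^p)+t^{2p}\D_\alpha(\varphi^p,\varphi^p),
\]
while $\|u+t\varphi\|_2^2=1+t^2\|\varphi\|_2^2$ and $\|u+t\varphi\|_q^q=1+t^q\|\varphi\|_q^q$. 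All the Coulomb terms above are finite by the HLS inequality together with the interpolation embedding $L^2(\R^N)\cap L^q(\R^N)\hookrightarrow L^{2Np/(N+\alpha)}(\R^N)$, and crucially $\D_\alpha(u^p,\varphi^p)>0$, since the Riesz kernel is everywhere positive and $u^p,\varphi^p$ are nonnegative and not identically zero.

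It then remains to expand the quotient as $t\to0^+$. The denominator equals $(1+t^2\|\varphi\|_2^2)^{p\theta}(1+t^q\|\varphi\|_q^q)^{2p(1-\theta)/q}=1+O(t^{\min\{2,q\}})$, so
\[
\mathcal R_\alpha(u+t\varphi)=C+2\,\D_\alpha(u^p,\varphi^p)\,t^p+o(t^p).
\]
The decisive inequality is $p<\min\{2,q\}$: by hypothesis $p<2$, and $q>\tfrac{2Np}{N+\alpha}>p$ because $\alpha<N$. Hence the $t^p$ gain from the attractive Coulomb term strictly dominates the $O(t^2)$ and $O(t^q)$ losses produced by the two norms in the denominator, so $\mathcal R_\alpha(u+t\varphi)>C=\mathscr{C}_{N,\alpha,p,q}$ for all sufficiently small $t>0$, contradicting the maximality of $u$. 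This forces $\mathrm{Supp}(u)=\R^N$. I do not anticipate a genuine obstacle here; the only points needing care are the bookkeeping of the exponents $p$, $2$, $q$ in the expansion and the (routine) verification that $\D_\alpha(u^p,\varphi^p)$ is finite and strictly positive. A possible alternative would be to write down the Euler--Lagrange equation, which on $\{u>0\}$ reads $I_\alpha*u^p=C\theta\,u^{2-p}+C(1-\theta)u^{q-p}$, and to exploit that $I_\alpha*u^p$ is continuous and strictly positive up to $\partial B_R$; but for $p<2$ the right-hand side tends to $C\theta\,\ell^{2-p}+C(1-\theta)\ell^{q-p}$ with $\ell=\lim_{|x|\to R^-}u(x)$ forced to be positive, so that route would require an extra step excluding a jump of $u$ at $\partial B_R$, which the energy comparison sidesteps entirely.
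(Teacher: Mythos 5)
Your argument is correct and matches the paper's proof almost verbatim: both normalize $\|u\|_2=\|u\|_q=1$, perturb by $u+t\chi_A$ on a set $A$ disjoint from $\mathrm{Supp}(u)$, expand the quotient using the disjointness to split $(u+t\chi_A)^p$, and conclude from the fact that the numerator gains a term of order $t^p$ while the denominator only loses $O(t^{\min\{2,q\}})=O(t^2)$, which for $p<2$ contradicts maximality. The only cosmetic difference is that you specialize $A=B_{2R}\setminus\overline{B_R}$ and spell out the positivity of $\D_\alpha(u^p,\varphi^p)$ explicitly.
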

\begin{proof}
	Without loss of generality we can assume that $\|u\|_2=\|u\|_q=1$.  Arguing by contradiction, assume that there exists an open set $A\subset \bbrn$ with $A\cap \mathrm{Supp}(u)=\varnothing$ and $0<|A|<+\infty$. For $\epsilon>0$, consider the family of trial functions $v_\epsilon:=u+\epsilon\chi_{A}\in L^2(\R^N)\cap L^q(\R^N)$. We obtain
	$$\aligned
	\mathcal{R}_\alpha(v_\epsilon)
	&=\frac{\D_\alpha(|u+\epsilon\chi_{A}|^p, |u+\epsilon\chi_{A}|^p)}{\Big(\int_{\bbrn}(u^2+\epsilon^2\chi_{A})dx\Big)^{p\theta}\Big(\int_{\bbrn}(u^q+\epsilon^q\chi_A)dx\Big)^{\frac{2p(1-\theta)}{q}}}\\
	&\geq\frac{\D_\alpha(|u|^p, |u|^p)+2\epsilon^p \D_\alpha(|u|^p, \chi_A)+\epsilon^{2p}\D_\alpha(\chi_A, \chi_A)}{(1+p\theta\epsilon^{2}|A|)\big(1+\epsilon^q\frac{2p(1-\theta)}{q}|A|\big)}\\
	&\geq\frac{\D_\alpha(|u|^p, |u|^p)+2\epsilon^p \D_\alpha(|u|^p, \chi_A)+\epsilon^{2p}\D_\alpha(\chi_A, \chi_A)}{1+C\epsilon^{2}}\\
	&\geq \mathscr{C}_{N,\alpha,p,q}+\frac{2\epsilon^p \D_\alpha(|u|^p, \chi_A)+\epsilon^{2p}\D_\alpha(\chi_A, \chi_A)-C\epsilon^{2}}{1+C\epsilon^{2}}.
	\endaligned
	$$
	Because $p<2$, there exists $\epsilon_0>0$ such that for all $\epsilon\in (0, \epsilon_0)$, we have $\mathcal{R}_\alpha(v_\epsilon)>\mathscr{C}_{N,\alpha,p,q}$, which contradicts to the fact that $u$ is a maximizer.
\end{proof}

\subsection{Connection with ground states of \eqref{eqTF}}
A direct computation shows that the Euler-Lagrange equation of $\mathcal{R}_\alpha(u)$
(or equivalently $\log\mathcal{R}_\alpha(u)$) for $u\in L^q(\R^N)\cap L^2(\R^N)$ has the form
\begin{equation}\label{eq17}
A u
+B |u|^{q-2}u
= C (I_\alpha *|u|^p)|u|^{p-2}u
\quad \text{in $\R^N$},
\end{equation}
where
$$
A=\frac{2p\theta}{\|u\|_2^2}, \quad 
B=\frac{2p(1-\theta)}{\|u\|_q^q},\quad
C=\frac{2p}{\D_\alpha(|u|^{p},|u|^{p})}.
$$
In particular, maximizers of $\C_{N,\alpha,p,q}$ constructed in the proof
of Lemma \ref{t-GN-existence} are weak solutions of Eq. \eqref{eq17} and, after a rescaling, of  \eqref{eqTF}.  Indeed, given a maximizer $u$ for
$\C_{N,\alpha,p,q}$, for $\lambda,\mu>0$, consider the two-parameter family of functions $u_{\lambda,\mu}(x)=\lambda u(\mu x)$. In view of the scaling invariance and homogeneity of  $\mathcal{R}_\alpha$, we know that $\mathcal{R}_\alpha(u)=\mathcal{R}_\alpha(u_{\lambda,\mu})$. Therefore if we set $\lambda_{*}$ and $\mu_{*}$ such that
\begin{equation}\label{lamba_mu_opt}
	\lambda^{q-2}_{*}=\left(\frac{1-\theta}{\theta}\right) \frac{\left\|u\right\|^2_{2}}{\left\|u\right\|^q_q},\qquad \mu^{\alpha}_{*}=\left(\frac{1-\theta}{
		\lambda^{q-2p}_{*}} \right)\frac{\D_{\alpha}(|u|^p,|u|^p)}{\left\|u\right\|^q_{q}},
\end{equation}
we obtain $A=B=C$, and hence $u_{\lambda_*,\mu_*}$ is a solution of $(TF)$. In the next lemma we prove that $u_{\lambda_*,\mu_*}$ is a ground state of $(TF)$, i.e. $u_{\lambda_*,\mu_*}$ belongs to the Poho\v zaev manifold $\mathscr{P}$, defined in \eqref{P-manifold}, and $E(u_{\lambda_*,\mu_*})=\sigma_*$, where
\begin{equation}\label{def-sigma}
	\sigma_*: = \inf_{u\in \mathscr{P}} E(u)
\end{equation}
is the {\em ground state energy} of $(TF)$.

\begin{lemma}\label{rescaling_1}
	Assume that	$p>\frac{N+\alpha}{N}$ and $q>\frac{2Np}{N+\alpha}$.
	Let $u\in L^2(\R^N)\cap L^q(\R^N)$ be a nonnegative radial nonincreasing maximizer for $\mathscr{C}_{N,\alpha,p,q}$. 
	Then the function $u_{\lambda_*,\mu_*}$, where $\lambda_*$ and $\mu_*$ are defined by \eqref{lamba_mu_opt}, is a ground state of $(TF)$.
\end{lemma}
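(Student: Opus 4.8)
The plan is to check two things about $u_0:=u_{\lambda_*,\mu_*}$: that it lies in the Poho\v zaev manifold $\mathscr{P}$, and that $E(u_0)$ equals the ground state energy $\sigma_*$ from \eqref{def-sigma}. We already know, from the discussion preceding the lemma, that $u_0$ solves \eqref{eqTF}, hence it satisfies the Nehari identity \eqref{Nehari}, i.e. $\|u_0\|_2^2+\|u_0\|_q^q=\D_\alpha(|u_0|^p,|u_0|^p)$. Two further elementary facts will be used throughout. First, since the quotient $\mathcal R_\alpha$ is invariant under dilations and scalings, $\mathcal R_\alpha(u_0)=\mathcal R_\alpha(u)=\mathscr{C}_{N,\alpha,p,q}$; in particular $u_0$ attains equality in the interpolation inequality \eqref{eq19}. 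Second, from $\|u_0\|_2^2=\lambda_*^{2}\mu_*^{-N}\|u\|_2^2$ and $\|u_0\|_q^q=\lambda_*^{q}\mu_*^{-N}\|u\|_q^q$, the definition of $\lambda_*$ in \eqref{lamba_mu_opt} forces the ratio $\|u_0\|_2^2/\|u_0\|_q^q=\theta/(1-\theta)$.

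To see that $u_0\in\mathscr{P}$, I would substitute the Nehari identity into $\mathcal P(u_0)$ to express $\mathcal P(u_0)$ as a linear combination of $\|u_0\|_2^2$ and $\|u_0\|_q^q$ alone, and then use $\|u_0\|_2^2=\tfrac{\theta}{1-\theta}\|u_0\|_q^q$ together with the defining relation $\frac{N+\alpha}{2Np}=\frac{\theta}{2}+\frac{1-\theta}{q}$ of \eqref{eq20}; the coefficient then collapses to zero, so $\mathcal P(u_0)=0$.

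For the energy, write $t(v):=\tfrac12\|v\|_2^2+\tfrac1q\|v\|_q^q$ and $d(v):=\D_\alpha(|v|^p,|v|^p)$. On $\mathscr{P}$ the constraint $\mathcal P(v)=0$ reads $d(v)=\tfrac{2Np}{N+\alpha}t(v)$, so $E(v)=t(v)-\tfrac1{2p}d(v)=\tfrac{\alpha}{N+\alpha}t(v)$, and minimizing $E$ over $\mathscr{P}$ amounts to minimizing $t$ over $\mathscr{P}$. The heart of the argument is a uniform lower bound for $t$ on $\mathscr{P}$, obtained by sandwiching. Put $a:=\tfrac{Np\theta}{N+\alpha}$ and $b:=\tfrac{2Np(1-\theta)}{q(N+\alpha)}$; by \eqref{eq20} we have $a+b=1$ and $\tfrac{N+\alpha}{N}a=p\theta$, $\tfrac{N+\alpha}{N}b=\tfrac{2p(1-\theta)}{q}$. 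Weighted AM--GM gives $t(v)\ge(2a)^{-a}(qb)^{-b}(\|v\|_2^2)^a(\|v\|_q^q)^b$, with equality precisely when $\|v\|_2^2/\|v\|_q^q=\theta/(1-\theta)$; raising this to the power $(N+\alpha)/N$ converts the right-hand side into a constant multiple of $\|v\|_2^{2p\theta}\|v\|_q^{2p(1-\theta)}$. Feeding in the interpolation inequality $\|v\|_2^{2p\theta}\|v\|_q^{2p(1-\theta)}\ge d(v)/\mathscr{C}_{N,\alpha,p,q}=\tfrac{2Np}{(N+\alpha)\mathscr{C}_{N,\alpha,p,q}}\,t(v)$ (equality precisely when $v$ maximizes $\mathcal R_\alpha$) and cancelling one factor of $t(v)>0$ yields $t(v)^{\alpha/N}\ge\kappa$ for a constant $\kappa>0$ independent of $v$, i.e. $t(v)\ge t_0:=\kappa^{N/\alpha}$, hence $E(v)\ge\tfrac{\alpha}{N+\alpha}t_0$ for every $v\in\mathscr{P}$.

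Finally, $u_0$ satisfies both equality conditions above — it maximizes $\mathcal R_\alpha$ and has ratio $\|u_0\|_2^2/\|u_0\|_q^q=\theta/(1-\theta)$ — so $t(u_0)=t_0$ and therefore $E(u_0)=\tfrac{\alpha}{N+\alpha}t_0\le E(v)$ for all $v\in\mathscr{P}$; combined with $u_0\in\mathscr{P}$ this gives $E(u_0)=\sigma_*$, so $u_0$ is a ground state of \eqref{eqTF}. The only delicate point is the compatibility of the two equality cases: the AM--GM inequality is saturated exactly at the ratio $\theta/(1-\theta)$, and it is precisely the value of $\theta$ fixed by \eqref{eq20}, together with the choice of $\lambda_*$ in \eqref{lamba_mu_opt}, that makes this the ratio carried by $u_0$. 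Everything else is routine manipulation of the exponents.
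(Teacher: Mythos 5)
Your proof is correct, but it takes a genuinely different route than the paper's. The paper introduces an auxiliary functional $\mathcal{E}(w)=\tfrac12\|w\|_2^2+\tfrac1q\|w\|_q^q$ on the normalized set $\mathcal{A}=\{w:\D_\alpha(|w|^p,|w|^p)=1\}$, optimizes $\mathcal{E}(w_t)$ over dilations $w_t(\cdot)=t^{-(N+\alpha)/(2p)}w(\cdot/t)$ to get a lower bound in terms of $\|w\|_2^{2p\theta}\|w\|_q^{2p(1-\theta)}$, and then uses an explicit bijection between $\mathscr{P}$ and $\mathcal{A}$ to derive the identity $\sigma_*=\alpha(2Np)^{N/\alpha}\bigl(\theta_*/(N+\alpha)\bigr)^{(N+\alpha)/\alpha}\mathscr{C}_{N,\alpha,p,q}^{-N/\alpha}$; the claim that $E(u_*)=\sigma_*$ then follows by verifying that $u_*$ satisfies this same algebraic relation. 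You instead work directly on $\mathscr{P}$: the constraint $\mathcal{P}(v)=0$ reduces $E$ to $\tfrac{\alpha}{N+\alpha}t(v)$ with $t(v)=\tfrac12\|v\|_2^2+\tfrac1q\|v\|_q^q$, and you bound $t$ from below by weighted AM--GM in the two norms followed by the interpolation inequality, finally observing that $u_0=u_{\lambda_*,\mu_*}$ is simultaneously tight in both inequalities. This is a cleaner and shorter path to the conclusion of the lemma itself. The trade-off is that the paper's detour through $\mathcal{A}$ is not wasted: it produces the explicit formula \eqref{rel_minimi} relating $\sigma_*$ and $\mathscr{C}_{N,\alpha,p,q}$, which is reused later (e.g.\ in Corollaries~\ref{cor-supp-0} and~\ref{cor-supp-N}). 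Your argument implicitly computes the same constant $\kappa^{N/\alpha}\cdot\tfrac{\alpha}{N+\alpha}$, so if one wanted to recover \eqref{rel_minimi} one could simply unwind your $\kappa$, but you would have to track those constants explicitly to match the form stated in the paper. Also note your verification of $u_0\in\mathscr{P}$ uses the Nehari identity while the paper computes $\mathcal{P}(u_*)$ directly from \eqref{lamba_mu_opt}; both work, and since your reduction $E=\tfrac{\alpha}{N+\alpha}t$ already depends on $\mathcal{P}(v)=0$, the direct computation would be the more uniform choice.
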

\begin{proof}
	For the brevity of notation, let's set $u_*=u_{\lambda_*,\mu_*}$ for any maximizer $u$ of the quotient $\mathcal{R}_\alpha$.  From \eqref{lamba_mu_opt} we deduce directly that 
		\[
		\frac{N}{2}\|u_*\|_2^2 +\frac{N}{q}\|u_*\|_q^q -\frac{N+\alpha}{2p}
		\D_{\alpha}(|u_*|^p,|u_*|^p) 
		=\frac{N}{\theta}\left(\frac{\theta}{2} + \frac{1-\theta}{q} 
		-\frac{N+\alpha}{2Np}\right) \|u_*\|_2^2=0,
		\]
		that is  $u_*\in\mathscr{P}$. 	
	We only need to prove that $E(u_*)=\sigma_*$, where $\sigma_*$ is the ground state energy defined in \eqref{def-sigma}. To this aim, we explore the relation between the optimal constant $\mathscr{C}_{N,\alpha,p,q}$ and the ground state energy $\sigma_*$.
	
	As an intermediate step, consider the functional 
	$$\mathcal{E}(w)=\frac{1}{2}\|w\|^2_2+\frac{1}{q}\|w\|^q_{q}$$
	on the set 
	$$\mathcal{A}:=\left\{w\in L^2(\R^N)\cap L^q(\R^N):\D_{\alpha}(|w|^p,|w|^p)=1 \right\}$$
	and note that $\mathcal{A}$ is invariant with respect to the rescaling
	$w_t(\cdot)=t^{-\frac{N+\alpha}{2p}}w(\cdot/t)$.
	
For a given $w\in \mathcal{A}$, by optimizing the quantity $\mathcal{E}(w_t)$ with
respect to $t$ (see \cite{PhD} for the details) we deduce that 
\[
  \mathcal{E}(w) \geq \mathcal{E}(w_{t^*}) 
  = \left(\|w\|_2^{2p\theta}\|w\|_q^{2p(1-\theta)}\right)^{\frac{N}{N+\alpha}} \theta_*,
\]
with 
\begin{equation}\label{theta-star}
  t^* = \left(
    \frac{(N+\alpha)q-2Np}{q(Np-N-\alpha)}\frac{\|w\|_q^q}{\|w\|_2^2}
  \right)^{\frac{2p}{(q-2)(N+\alpha)}},\quad 
	\theta_*=\left(\frac{1-\theta}{\theta}\right)^{\frac{q\theta}{2(1-\theta)+q\theta}}\left(\frac{N+\alpha}{2Np(1-\theta)}\right).
\end{equation}
As a consequence,
\begin{equation}
\inf_{w\in \mathcal{A}}\mathcal{E}(w)=\inf_{w\in \mathcal{A}} 
   \theta_*\left(\|w\|_2^{2p\theta}\|w\|_q^{2p(1-\theta)}\right)^{\frac{N}{N+\alpha}} 
=\theta_* \mathscr{C}^{-\frac{N}{N+\alpha}}_{N,\alpha,p,q}, 
\end{equation}
On the other hand, functions in $\mathscr{P}$ and $\mathcal{A}$ can be one--to--one mapped to each other with a spatial scaling. That is, 
for any $u \in \mathscr{P}$ we have $u(\cdot/\tau_u) \in \mathcal{A}$ for 
$\tau_u: = \big(\mathcal{D}_\alpha(|u|^p,|u|^p)\big)^{-1/(N+\alpha)}=\big(\frac{N+\alpha}{2Np\mathcal{E}(u)}\big)^{1/\alpha}$; and for any $w \in \mathcal{A}$ we have $w(\cdot/t_w) \in \mathscr{P}$ with 
$t_w: = \big(\frac{2Np\mathcal{E}(w)}{N+\alpha}\big)^{1/\alpha}$. Therefore, given $u \in \mathscr{P}$ and $w=u(\cdot/\tau_u)\in\mathcal A$, we have
\begin{equation}\label{rel_minimi-minus}
  E(u) = \mathcal{E}(u) - \frac{1}{2p}\D_\alpha(|u|^p,|u|^p)
  =\frac{\alpha}{N+\alpha}\left(\frac{2Np}{N+\alpha}\right)^{\frac{N}{\alpha}}\mathcal{E}(w)^{\frac{N+\alpha}{\alpha}}.
\end{equation}
Using the one-to-one correspondence between functions in $\mathscr{P}$ and $\mathcal{A}$ and taking the infimum in \eqref{rel_minimi-minus} we obtain (see \cite{PhD} for further details),
\begin{equation}\label{rel_minimi}
  \sigma_*  =\frac{\alpha}{N+\alpha}\left(\frac{2Np}{N+\alpha}\right)^{\frac{N}{\alpha}}\left(\inf_{w \in \mathcal{A}}\mathcal{E}(w)\right)^{\frac{N+\alpha}{\alpha}}
=\alpha (2Np)^{\frac{N}{\alpha}}\left(\frac{\theta_*}{N+\alpha}\right)^{\frac{N+\alpha}{\alpha}}\mathscr{C}^{-\frac{N}{\alpha}}_{N,\alpha,p,q},
\end{equation}
establishing the relation between these optimal values.

In order to conclude, we only need to show that the relation \eqref{rel_minimi} is satisfied  
for $u_*\in\mathscr{P}$. In fact, the choice of $\lambda_*$ and $\mu_*$ in \eqref{lamba_mu_opt} implies 
\[
  \|u_*\|_q^q = \frac{1-\theta}{\theta}\|u_*\|_2^2,\quad 
  \mathcal{D}_\alpha(|u_*|^p,|u_*|^p)
  =\frac{1}{\theta}\|u_*\|_2^2,
\]
which enables us to write both $E(u_*)$ and $\mathcal{R}_\alpha(u_*)$ in terms of $\|u_*\|_2^2$. That is, 
\[
  E(u_*) = 
	 \frac{\alpha (q-2)}{
		2(N+\alpha)q-4Np
  }\|u_*\|_2^2,\quad \mbox{ and  }\quad 
	\mathcal{R}_\alpha(u_*) 
	=\frac{1}{\theta}\left(\frac{\theta}{1-\theta}\right)^{2p(1-\theta)/q}
  \|u_*\|_2^{-2\alpha/N}.
\]
By eliminating $\|u_*\|_2$ from the above two relations, we infer that
$$E(u_*)=\alpha
(2Np)^{\frac{N}{\alpha}}\left(\frac{\theta_*}{N+\alpha}\right)^{\frac{N+\alpha}{\alpha}}\mathcal{R}_\alpha(u_*)^{-\frac{N}{\alpha}},$$
which in view of \eqref{rel_minimi} and $\mathcal{R}_\alpha(u_*)=\mathscr{C}_{N,\alpha,p,q}$ implies $E(u_*)=\sigma_*$. 
\end{proof}

\smallskip
\proof[Proof of Theorem \ref{t-GN}]
Follows from Lemma \ref{t-GN-existence} and \ref{rescaling_1}.
\qed

\section{Regularity, decay and support}\label{s-support}

In this section we establish qualitative properties of the ground states of $(TF)$, described in Theorem \ref{thm01} and, additionally, obtain a quantitative characterisation of the H\"older continuity of the ground states.

\subsection{Regularity, decay and support properties}
Recall that if \(s \in (1,\frac{N}{\alpha})\) and $\frac{1}{t}=\frac{1}{s}-\frac{\alpha}{N}$, then the HLS inequality implies that the operator
$$I_\alpha*(\cdot ): L^s (\R^N)\to L^t (\R^N)$$
is  bounded  \cite{Lieb}. We first establish the following fact about the far field behaviour of
$I_\alpha*u^p$: if the nonnegative function $u$ decays 
fast enough, then $I_\alpha *u^p$ decays algebraically like the Riesz potential $I_\alpha$ itself.

\begin{lemma}\label{lemm_11}
	Assume that	$p>\frac{N+\alpha}{N}$ and $q>\frac{2Np}{N+\alpha}$.
	Let $u\in L^2(\R^N)\cap L^q(\R^N)$ be a nonnegative radial nonincreasing solution of \eqref{eqTF}. Then
	there exists $\epsilon>0$ such that $u\in L^{p-\epsilon}(\R^N)$ and 
	\begin{equation}\label{l-potential-0}
		\lim_{|x|\to\infty}\frac{I_{\alpha}*u^p}{I_{\alpha}(x)\int_{\bbrn}u^pdx}=1.
	\end{equation}	
\end{lemma}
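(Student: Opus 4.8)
The plan is to prove the two assertions in turn: first that $u\in L^{p-\epsilon}(\R^N)$ for some small $\epsilon>0$, and then that $I_\alpha*u^p$ decays like $I_\alpha$ at infinity with constant $\int_{\R^N}u^p\,dx$.

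For the integrability claim, I would start from the fact that $u\in L^2\cap L^q$ and, by interpolation, $u\in L^s$ for all $s\in[2,q]$. The issue is only whether $u$ is integrable to a power slightly below $p$; since $p>\frac{N+\alpha}{N}$ and the hypothesis \eqref{A} gives $\frac{2Np}{N+\alpha}<q$, i.e. $p<\frac{q(N+\alpha)}{2N}$, one has in particular $p\in(2,q)$ is possible but also $p$ could be below $2$ — so the decay of $u$ at infinity from the Strauss-type bound $u(x)\lesssim\min\{|x|^{-N/2},|x|^{-N/q}\}$ (valid for radial nonincreasing $u\in L^2\cap L^q$, used already in Lemma~\ref{t-GN-existence}) gives $u(x)\lesssim|x|^{-N/q}$ for large $|x|$, hence $u^{p-\epsilon}(x)\lesssim|x|^{-N(p-\epsilon)/q}$, which is integrable near infinity as soon as $(p-\epsilon)/q>1$. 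That fails in general, so instead I would use the equation \eqref{eqTF} itself: on the region $\{u\le 1\}$ we have $u^q\le u^2$, and testing the Pohozaev/Nehari structure or directly bootstrapping via $u+u^{q-1}=(I_\alpha*u^p)u^{p-1}$ shows $u$ inherits extra decay from the boundedness of $I_\alpha*u^p$. Concretely, $u\in L^{2Np/(N+\alpha)}$ by HLS applied to the finite Coulomb energy, and $\frac{2Np}{N+\alpha}>p$ is false — wait, $\frac{2Np}{N+\alpha}<p \iff \frac{2N}{N+\alpha}<1 \iff \alpha<-N$, impossible, so in fact $\frac{2Np}{N+\alpha}>p$ always (since $\frac{2N}{N+\alpha}>1$), giving $u\in L^{r}$ with $r=\frac{2Np}{N+\alpha}>p$ directly; then $u\in L^p$ and, by interpolating between $L^2$ (if $p>2$) or between $L^p$ and a slightly lower integrability coming from the Strauss decay, one gets $u\in L^{p-\epsilon}$. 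The cleanest route: $u$ radial nonincreasing and in $L^p$ forces $u(x)\to 0$, and the Strauss bound combined with $u\in L^p$ yields $u\in L^{p-\epsilon}$ for small $\epsilon$ because the ``dangerous'' region is near the origin, where $u$ is bounded (radial nonincreasing $L^p$ function is locally bounded away from $0$... actually bounded at $0$ only if it is, which follows once we know $u\in C^\infty$ where positive and the equation gives local boundedness via the HLS mapping $I_\alpha*u^p\in L^\infty_{loc}$). I would assemble these to conclude $u\in L^{p-\epsilon}$.

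For the far-field asymptotics \eqref{l-potential-0}, the strategy is a standard splitting of the convolution integral. Write, for $|x|$ large,
\[
\frac{I_\alpha*u^p(x)}{I_\alpha(x)}
=\int_{\R^N}\frac{|x|^{N-\alpha}}{|x-y|^{N-\alpha}}\,u^p(y)\,dy,
\]
and split the domain into $B_{|x|/2}(0)$, the annulus $|y-x|<|x|/2$, and the remainder. On $B_{|x|/2}(0)$ one has $|x-y|\sim|x|$, so $\frac{|x|^{N-\alpha}}{|x-y|^{N-\alpha}}\to 1$ pointwise and is bounded; dominated convergence (using $u^p\in L^1$) gives this piece $\to\int u^p\,dy$. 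On the annulus around $x$, the factor $|x|^{N-\alpha}$ times $\int_{|y-x|<|x|/2}|x-y|^{\alpha-N}u^p(y)\,dy$ is controlled using $u(y)\lesssim|y|^{-N/q}\sim|x|^{-N/q}$ there, yielding a bound $\lesssim|x|^{N-\alpha}\cdot|x|^{-Np/q}\cdot|x|^{\alpha}=|x|^{N-Np/q}$, which tends to $0$ provided $Np/q>N$, i.e. $p>q$ — false in general — so here I must instead use $u^{p-\epsilon}\in L^1$: Hölder on the annulus gives $\int_{|y-x|<|x|/2}|x-y|^{\alpha-N}u^p\,dy\lesssim \|u^p\|_{L^{1}(\text{annulus})}^{?}$ combined with the uniform smallness of $\|u^p\|_{L^1(\{|y|>|x|/2\})}\to 0$. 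On the outer region $|y|>|x|/2$, $|y-x|>|x|/2$ too on most of it after removing the annulus, and $\frac{|x|^{N-\alpha}}{|x-y|^{N-\alpha}}$ is bounded while $\int_{|y|>|x|/2}u^p\to 0$.

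The main obstacle is making the annular contribution rigorous: near $y=x$ the Riesz kernel is singular, and $u^p$ need not be bounded there a priori (though $u$ is continuous and $\to 0$), so one needs the extra integrability $u\in L^{p-\epsilon}$ precisely to apply Hölder's inequality $\int_{|y-x|<|x|/2}|x-y|^{\alpha-N}u^p(y)\,dy\le \big(\int_{|y-x|<|x|/2}|x-y|^{(\alpha-N)s'}dy\big)^{1/s'}\|u^p\|_{L^s}$ with $s=(p-\epsilon)/p<1$... which forces $s'<0$, so this naive Hölder fails and one must instead use that $u^p=u^{p-\epsilon}\cdot u^\epsilon$ with $u^\epsilon$ small at infinity, i.e. factor out $u(y)^\epsilon\lesssim|x|^{-N\epsilon/q}\to 0$ on the annulus, leaving $\int_{|y-x|<|x|/2}|x-y|^{\alpha-N}u^{p-\epsilon}(y)\,dy$, which is finite and bounded uniformly in $x$ by the local smoothing of the Riesz potential applied to $u^{p-\epsilon}\in L^1$ together with $u^{p-\epsilon}$ locally bounded. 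Combining, the annular term is $O(|x|^{N-\alpha}\cdot|x|^{-N\epsilon/q}\cdot 1)$ — still too big by the $|x|^{N-\alpha}$ prefactor. The correct fix is the familiar one for Choquard-type decay (cf.\ the analysis in \cite{ZLVMeps}): on the annulus one keeps $I_\alpha(x-y)$ rather than comparing to $I_\alpha(x)$, noting $\int_{|y-x|<|x|/2}I_\alpha(x-y)u^p(y)\,dy\le \|u\|^p_{L^\infty(\{|y|>|x|/2\})}\int_{|z|<|x|/2}I_\alpha(z)\,dz\lesssim o(1)\cdot|x|^\alpha$, and then dividing by $I_\alpha(x)\sim|x|^{\alpha-N}$ gives $o(1)\cdot|x|^{N}$ — no. I will therefore organize the proof so that the annulus is absorbed into the ``outer'' region and estimated by $\sup_{|y|>|x|/2}u(y)^{p}\to 0$ times $\int_{\R^N}I_\alpha(x-y)\,\mathbf 1_{|y|>|x|/3}\,dy / I_\alpha(x)$, the last ratio being $O(1)$ by scaling since $\int_{|y|>c|x|}|x-y|^{\alpha-N}\,dy$ diverges — so one truncates to $|y|<2|x|$ and uses $\int_{|x|/3<|y|<2|x|}|x-y|^{\alpha-N}dy\lesssim|x|^\alpha$, giving ratio $O(|x|^N)$. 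The genuinely correct and standard argument, which I will follow, splits into $|y|<|x|/2$ (main term, DCT), $|x|/2\le|y|\le 2|x|$ (here use $u(y)^p\lesssim|x|^{-Np/q}$, and $\int_{|x|/2\le|y|\le2|x|}|x-y|^{\alpha-N}dy\lesssim|x|^\alpha$, so this piece is $\lesssim |x|^{N-\alpha}|x|^{-Np/q}|x|^\alpha=|x|^{N-Np/q}$, which $\to0$ iff $p/q>1$; when $p\le q$ one upgrades using $u^{p-\epsilon}\in L^1$ so $\int_{|x|/2\le|y|}u^{p-\epsilon}\to0$ and $u^\epsilon\lesssim|x|^{-N\epsilon/q}$, hence the piece is $\lesssim|x|^{N-\alpha}\cdot|x|^\alpha\cdot|x|^{-N\epsilon/q}\cdot o(1)$ — still the $|x|^N$). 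I acknowledge this bookkeeping is the crux and I would resolve it exactly as in the proof of the decay asymptotics in \cite{ZLVMeps} (whose argument this lemma mirrors): the point is that on $\{|y|\ge|x|/2\}$ one has BOTH $u^p\in L^1$ small AND $|x-y|^{\alpha-N}\le (|x|/2)^{\alpha-N}$ is NOT valid on the annulus, but excising a ball $B_{|x|/4}(x)$ on which $u(y)^p\le \|u^p\|_{L^\infty(|y|\ge|x|/4)}=o(1)$ and $\int_{B_{|x|/4}(x)}|x-y|^{\alpha-N}dy\lesssim|x|^\alpha$ contributes $o(1)|x|^{N-\alpha}|x|^\alpha=o(|x|^N)$ — the persistent obstruction shows the lemma as I've set it up needs $I_\alpha*u^p\sim I_\alpha$, meaning the ratio tends to $\int u^p$, so the pieces other than the main DCT piece must be $o(1)$, which forces using that $u(x)\to0$ fast, i.e. $u\in L^{p-\epsilon}$ with the decay rate from the equation; I will invoke the $L^{p-\epsilon}$ membership together with the continuity $u(x)\to 0$ to close the estimate on $B_{|x|/4}(x)$ via $\int_{B_{|x|/4}(x)}|x-y|^{\alpha-N}u(y)^p\,dy\le \|u\|_{L^\infty(B_{|x|/4}(x))}^{p-(p-\epsilon)}\int_{B_{|x|/4}(x)}|x-y|^{\alpha-N}u(y)^{p-\epsilon}\,dy$ and then the Riesz potential of the $L^1$ function $u^{p-\epsilon}$ restricted to a far ball is $o(|x|^{\alpha-N}\cdot|x|^N)=o(1)$ after dividing by $I_\alpha(x)$ — this is the step I expect to be the main technical obstacle, and where the hypothesis $u\in L^{p-\epsilon}$ is indispensable.
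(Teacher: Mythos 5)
Your proof has two genuine gaps, one in each half of the lemma.

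\textbf{On $u\in L^{p-\epsilon}$.} None of the several routes you sketch actually establishes this. The observation that $u\in L^{2Np/(N+\alpha)}$ via HLS does not help, since (as you yourself check) $\frac{2Np}{N+\alpha}>p$ always, so it gives integrability \emph{above} $p$, not below. Your ``cleanest route'' also fails: the Strauss bound derived from $u\in L^p$ only gives $u(x)\lesssim|x|^{-N/p}$, so $u^{p-\epsilon}(x)\lesssim|x|^{-N(p-\epsilon)/p}=|x|^{-N+N\epsilon/p}$, which is \emph{not} integrable at infinity for any $\epsilon>0$ --- the Strauss upper bound is strictly too weak to conclude integrability below $p$ by itself. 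The paper instead runs a concrete \emph{downward} bootstrap in the exponent through the pointwise relation $u\le(I_\alpha*u^p)u^{p-1}$ (from \eqref{eqTF}): given $u\in L^{s_n}$ with $s_n\in(p,Np/\alpha)$, HLS controls $I_\alpha*u^p$ and H\"older controls the product, yielding $u\in L^{s_{n+1}}$ with
\[
\frac{1}{s_{n+1}}=\frac{2p-1}{s_n}-\frac{\alpha}{N}.
\]
Starting from $s_0\in(2,s_*)$ where $s_*=2N(p-1)/\alpha>2$ is the unstable fixed point, $(s_n)$ is strictly decreasing, so it crosses $p$ after finitely many steps (with a mild adjustment of $s_0$ to avoid equality). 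This argument is what you gesture at with ``directly bootstrapping via $u+u^{q-1}=(I_\alpha*u^p)u^{p-1}$'' but never carry out.

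\textbf{On the asymptotics \eqref{l-potential-0}.} Your repeated bookkeeping failure on the annulus $\{|y|\sim|x|\}$ has a single cause: you keep invoking the $L^q$-based Strauss decay $u(y)\lesssim|y|^{-N/q}$, which only gives $u(y)^p\lesssim|x|^{-Np/q}$, and since $q>p$ the resulting annular contribution $\lesssim|x|^{N-Np/q}$ does not vanish. The point of establishing $u\in L^{p-\epsilon}$ first --- as the paper does --- is precisely to upgrade the Strauss bound to
\[
u(|x|)\le C\,|x|^{-N/(p-\epsilon)},
\]
so that $u^p\lesssim|x|^{-Np/(p-\epsilon)}$ with $\frac{Np}{p-\epsilon}>N$ strictly. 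Plugging this into your own annular estimate gives $\lesssim|x|^{N-\alpha}\cdot|x|^{-Np/(p-\epsilon)}\cdot|x|^{\alpha}=|x|^{-N\epsilon/(p-\epsilon)}\to 0$, which closes the estimate. You explicitly identify the $L^{p-\epsilon}$ hypothesis as indispensable but never substitute the corresponding decay rate; that substitution is exactly the missing step. (The paper then delegates the remaining routine splitting to \cite{TF}*{Lemma 6.1} rather than spelling it out.)
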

\begin{proof}
We first prove that $u \in L^{p-\epsilon}(\R^N)$ some $\epsilon>0$, which is trivial if $p>2$.  Otherwise if $p \in (\frac{N+\alpha}{N}, 2]$, 
we can show that $u \in L^{s_n}(\R^N)$ for a sequence $(s_n)$ of positive decreasing exponents eventually smaller than $p$.   

First by H\"older's inequality, we see that
	\begin{equation}\label{eq11}
		\int_{\bbrn}|(I_{\alpha}*u^p)u^{p-1}|^{\sigma}dx\leq \Big(\int_{\bbrn}|(I_{\alpha}*u^p)|^{\sigma t}dx\Big)^{\frac{1}{t}}\Big(\int_{\bbrn}u^{(p-1)\sigma r}dx\Big)^{\frac{1}{r}},
	\end{equation}
  provided that $1/t+1/r=1$ for positive $t$ and $r$. We want to find a sequence $(s_n)$ of positive numbers,
  so that if $u \in L^{s_n}(\R^N)$, then $u \in L^{s_{n+1}}(\R^N)$. By choosing the parameters $\sigma,t$
  and $r$ in Eq.~\eqref{eq11} so that 
  \[
    \sigma = s_{n+1},\qquad (p-1)\sigma r = s_n, \qquad 
    \frac{1}{\sigma t} = 
\frac{p}{s_n} - \frac{\alpha}{N}.    
  \]
The last equation, arising  from the HLS inequality,
has to be supplied with the condition $\alpha/N < p/s_n < 1$, or $s_n \in (p, Np/\alpha)$.  Therefore, the sequence $(s_n)$ satisfies the recursion relation
  \begin{equation}\label{eq12}
		\frac{1}{s_{n+1}}
= \frac{1}{\sigma t} + \frac{1}{\sigma r}
= \frac{p}{s_n} - \frac{\alpha}{N}+ \frac{p-1}{s_n}		
		=\frac{2p-1}{s_n}-\frac{\alpha}{N}.
	\end{equation}
With the (unstable) fixed point $s_* = 2N(p-1)/\alpha>2$ (as $p>(N+\alpha)/N$), 
the general term can be written as 
\begin{equation}\label{eq:rec}
\frac{1}{s_{n}} = (2p-1)^n\left(\frac{1}{s_0}-\frac{1}{s_*}\right)+ 
\frac{1}{s_*}.
\end{equation}
If $s_0$ is chosen to be any number inside the interval $(2,s_*)$, then $s_n$ is monotonically decreasing to zero. Therefore, we can look for the largest integer $n_0$ such that $s_{n_0}>p$. If $s_{n_0+1} < p$, then $u \in L^{p-\epsilon}$ for 
any positive $\epsilon < p-s_{n_0+1}$. Otherwise, if $s_{n_0+1}=p$, we can always choose 
$s_0$ slightly smaller to get $s_{n_0+1}<p$, since by the recursion relation 
Eq.~\eqref{eq:rec}, $s_n$ depends continuously and  monotonically on the initial value $s_0 \in (2,s_*)$.

Consequently, since $u$ is radially symmetric and nonincreasing, by the Strauss' $L^{p-\epsilon}$--bound we have
a faster decay estimate 
$$u(|x|)\leq C |x|^{-\frac{N}{p-\epsilon}}\quad(|x|>0).$$ 
Then, by \cite{TF}*{Lemma 6.1} we obtain the desired limit~\eqref{l-potential-0}. 
\end{proof}

\begin{corollary}\label{cor_12}
	Assume that $\frac{N+\alpha}{N}<p<2$ and $q>\frac{2Np}{N+\alpha}$. 
	Let $u\in L^2(\R^N)\cap L^q(\R^N)$ be a nonnegative radial nonincreasing solution of \eqref{eqTF}. Then $u$ satisfies the following algebraic decay rate
	\begin{equation} \label{eq:ufar}
	\lim_{x\to\infty}u(x)|x|^\frac{N-\alpha}{2-p}=\left(A_\alpha\int_{\R^N} u^p\,dx\right)^{\frac1{2-p}},
	\end{equation}
	where $A_\alpha>0$ is the Riesz constant.  Furthermore, we have that $u \in L^1(\R^N)$.
\end{corollary}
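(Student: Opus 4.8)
The plan is to feed the far-field behaviour of $I_\alpha*u^p$ from Lemma~\ref{lemm_11} into the pointwise algebraic form of \eqref{eqTF}. As a preliminary I would record that $u^p\in L^1(\R^N)$: Lemma~\ref{lemm_11} provides $u\in L^{p-\eps}(\R^N)$ for some $\eps>0$, and since $p-\eps<p<2$, interpolating with $u\in L^2(\R^N)$ gives $u\in L^p(\R^N)$. Consequently $I_\alpha*u^p$ is a well-defined, continuous, everywhere strictly positive function on $\R^N$ (strict positivity since $I_\alpha>0$ and $u\not\equiv 0$), and by Lemma~\ref{lemm_11}, $(I_\alpha*u^p)(x)/I_\alpha(x)\to\int_{\R^N}u^p\,dy$ as $|x|\to\infty$, which is \eqref{l-potential-0}.

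Since the hypotheses force $1<p<2<q$, on the open set $\{u>0\}$ I would divide \eqref{eqTF} by $u^{p-1}$ to obtain the identity
\[
u^{2-p}\bigl(1+u^{q-2}\bigr)=u^{2-p}+u^{q-p}=I_\alpha*u^p .
\]
The first step is to show $\{u>0\}=\R^N$, and I expect this to be the main obstacle. The function $g(t):=t^{2-p}+t^{q-p}$ is a continuous strictly increasing bijection of $[0,\infty)$ onto itself with $g(0)=0$, so on $\{u>0\}$ one has $u=g^{-1}(I_\alpha*u^p)$; as $I_\alpha*u^p$ is continuous and strictly positive, $u$ is then continuous on $\{u>0\}$ and, on any ball inside $\{u>0\}$, bounded below by a positive constant, hence it cannot decay to $0$ at a finite boundary of its support. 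For the solutions this corollary is applied to — the ground states of Theorem~\ref{t-GN}, which up to rescaling are the maximizers of $\mathscr{C}_{N,\alpha,p,q}$ by Lemma~\ref{rescaling_1} — the impossibility of a compact support is precisely Lemma~\ref{lem51}: inserting a small bump $\eps\chi_A$ outside the support changes the numerator of the Rayleigh quotient by $\O(\eps^p)$ while the denominator only by $\O(\eps^2)$, a strict gain because $p<2$.

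Granting $\{u>0\}=\R^N$, the decay rate follows by multiplying the displayed identity by $|x|^{N-\alpha}$ and sending $|x|\to\infty$: the $L^2$ Strauss radial bound gives $u(x)\le C|x|^{-N/2}\to 0$, so $u^{q-2}(x)\to 0$ (here $q>2$), while the right-hand side equals $A_\alpha\,(I_\alpha*u^p)(x)/I_\alpha(x)\to A_\alpha\int_{\R^N}u^p\,dy$ by the preliminary estimate. Hence $\bigl(u(x)|x|^{(N-\alpha)/(2-p)}\bigr)^{2-p}\to A_\alpha\int_{\R^N}u^p\,dy$, which is \eqref{eq:ufar} after taking $(2-p)$-th roots.

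Finally, for $u\in L^1(\R^N)$: near the origin $u\in L^2(B_1)\subset L^1(B_1)$ since $|B_1|<\infty$; for large $|x|$, \eqref{eq:ufar} yields $u(x)\lesssim|x|^{-(N-\alpha)/(2-p)}$, and this tail is integrable precisely because $(N-\alpha)/(2-p)>N$ is equivalent to $p>(N+\alpha)/N$, the standing assumption. Apart from the exclusion of compact support, every step is a routine consequence of Lemma~\ref{lemm_11}, the radial monotonicity of $u$, and elementary estimates.
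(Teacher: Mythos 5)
Your proof follows essentially the same route as the paper's: feed the far-field estimate of $I_\alpha*u^p$ from Lemma~\ref{lemm_11} into the pointwise identity $u^{2-p}(1+u^{q-2})=I_\alpha*u^p$, use $u(x)\to 0$ together with $q>2$ to drop the $u^{q-2}$ term, multiply by $|x|^{N-\alpha}$ and take the limit, and then read off $u\in L^1$ from the fact that $\frac{N-\alpha}{2-p}>N$ is equivalent to $p>\frac{N+\alpha}{N}$. (The paper deduces $u\to 0$ from monotonicity and $u\in L^2$ rather than the Strauss bound, but these are interchangeable.) The one genuine difference is that you make explicit the need for $\{u>0\}=\R^N$, which the paper's proof leaves implicit — and indeed the identity $u^{2-p}(1+u^{q-2})=I_\alpha*u^p$ only holds where $u>0$, so this is a legitimate concern. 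Be aware, though, that your local-positivity argument (``$u$ cannot decay to $0$ at a finite boundary because $I_\alpha*u^p$ is continuous and strictly positive'') rules out only \emph{continuous} vanishing at a finite radius; it does not by itself exclude a jump to zero at the boundary of a bounded support, which is precisely what occurs for $p>2$ (Lemma~\ref{regu_p_big_12}). Your fallback to Lemma~\ref{lem51}, which shows that maximizers have full support when $p<2$, is the correct way to close this for the ground states to which the corollary is actually applied, and mirrors how the paper's Theorem~\ref{thm01}(a) is obtained by combining Lemma~\ref{lem51} with Corollary~\ref{cor_12}.
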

\begin{proof}
	By Lemma~\ref{lemm_11}, $I_{\alpha}*u^p=I_{\alpha}(x)\int_{\bbrn}u^pdx\big(1+o(1)\big)$ as $|x|\to\infty$. 
	Hence, the governing equation $(TF)$ implies that
	$$
	\lim_{|x|\to \infty}u(x)^{2-p}|x|^{N-\alpha}(1+u^{q-2}(x))
	=\lim_{|x|\to \infty} |x|^{N-\alpha}I_{\alpha}*u^p= A_\alpha\int_{\R^N} u^p\,dx.
	$$
	From the monotonicity of $u$ and the fact that $q>2$, we conclude that $u^{q-2}(|x|)$ vanishes at infinity, and therefore
	$$
	\lim_{|x|\to \infty}u^{2-p}(x)|x|^{N-\alpha}=A_\alpha\int_{\R^N} u^p\,dx,
	$$
	which is equivalent to~\eqref{eq:ufar}. From the condition $\frac{N+\alpha}{N}<p<2$, 
	the power $\frac{N-\alpha}{2-p}$ is strictly larger than $N$. That is, $u$ decays faster than $|x|^{-N}$ and hence 
	$u \in L^1(\R^N)$.
\end{proof}

\begin{lemma}\label{lem34}
	Assume that	$p>\frac{N+\alpha}{N}$ and $q>\frac{2Np}{N+\alpha}$.
	Let $u\in L^2(\R^N)\cap L^q(\R^N)$ be a nonnegative solution of \eqref{eqTF}. Then   $u\in L^\infty(\R^N)$ and
	\begin{equation}\label{eqLinfty}
		I_{\alpha}*u^p\in C^{0,\tau}(\R^N)\quad \text{for every}\quad  \tau\in (0, \min\{\alpha, 1\})
	\end{equation} 
	In particular, if $p\le 2$ the following hold:
	\begin{itemize}
		\item[$(i)$] If $p<2$ then $u$ is H\"older continuous in $\left\{u>0\right\}$ of order $\tau$, for every $\tau\in (0, \min\{\alpha, 1\})$.
		\smallskip			
		\item[$(ii)$]  If $p=2$ then $u$ is H\"older continuous in $\left\{u>0\right\}$ of order $\kappa(q)$, where $\kappa(q)$ is defined by
		\begin{equation}\label{order_hold_10}
			\kappa(q)=
			\begin{cases} 
				\tau, & \mbox{if } q\le 3, \\ 
				\tfrac{\tau}{q-2}, & \mbox{if }  q>3,
			\end{cases}
		\end{equation}
		for every $\tau\in (0,\min\left\{\alpha,1\right\})$.
	\end{itemize}
\end{lemma}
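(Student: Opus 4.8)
\smallskip
\noindent\emph{Proof plan.} The plan is to first upgrade the integrability of $u$ all the way to $L^\infty(\R^N)$ by a Hardy--Littlewood--Sobolev (HLS) bootstrap, then read off the H\"older continuity of the Riesz potential $I_\alpha*u^p$ from that of the kernel $I_\alpha$, and finally, when $p\le 2$, transfer the regularity to $u$ itself on $\{u>0\}$ by inverting the algebraic nonlinearity in \eqref{eqTF}. For the first step, note that since $q>\tfrac{2Np}{N+\alpha}>p$, rewriting \eqref{eqTF} on $\{u>0\}$ gives $u^{2-p}+u^{q-p}=I_\alpha*u^p$, so in particular the pointwise bound $u^{q-p}\le I_\alpha*u^p$ holds on all of $\R^N$; hence $I_\alpha*u^p\in L^t(\R^N)$ implies $u\in L^{(q-p)t}(\R^N)$. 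Starting from $u\in L^2(\R^N)\cap L^q(\R^N)$ and iterating the HLS inequality, whenever $u\in L^{r_n}(\R^N)$ with $p<r_n<Np/\alpha$ then $u^p\in L^{r_n/p}(\R^N)$ and hence $u\in L^{r_{n+1}}(\R^N)$ with $\tfrac1{r_{n+1}}=\tfrac1{q-p}\big(\tfrac p{r_n}-\tfrac\alpha N\big)$. This is an affine recursion in $1/r_n$ with slope $\tfrac p{q-p}$, and the role of the sharp hypothesis $q>\tfrac{2Np}{N+\alpha}$ is precisely to guarantee that, starting from $r_0=q$, one has $1/r_{n+1}<1/r_n$ at every step and the iteration is not trapped above the threshold $\alpha/(Np)$. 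Therefore after finitely many steps $u\in L^r(\R^N)$ for all $r\in[2,\infty)$; since $p>\tfrac{N+\alpha}{N}>\tfrac{2\alpha}{N}$, we also have $u^p\in L^{s_0}(\R^N)$ with $s_0:=\max\{1,2/p\}<N/\alpha$, so splitting $I_\alpha=I_\alpha\chi_{B_1}+I_\alpha\chi_{B_1^c}$ (with $I_\alpha\chi_{B_1}\in L^a$ for $a<\tfrac N{N-\alpha}$ and $I_\alpha\chi_{B_1^c}\in L^b$ for $b>\tfrac N{N-\alpha}$) and applying Young's convolution inequality to the two pieces yields $I_\alpha*u^p\in L^\infty(\R^N)$, whence $u\le(I_\alpha*u^p)^{1/(q-p)}\in L^\infty(\R^N)$.

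Once $u\in L^\infty(\R^N)$, we have $u^p\in L^\infty(\R^N)\cap L^{s_0}(\R^N)$, and I would obtain the H\"older regularity of $\Phi:=I_\alpha*u^p$ from the classical estimate for Riesz potentials of bounded integrable densities. For $x,y\in\R^N$ with $h:=|x-y|$, split $\Phi(x)-\Phi(y)=\int_{\R^N}\big(I_\alpha(x-z)-I_\alpha(y-z)\big)u(z)^p\,dz$ into the region $\{|z-x|\le 2h\}$, where the integrand is controlled by $\|u^p\|_\infty\big(I_\alpha(x-z)+I_\alpha(y-z)\big)$ and the contribution is $\lesssim h^\alpha$, and the region $\{|z-x|>2h\}$, where $|I_\alpha(x-z)-I_\alpha(y-z)|\lesssim h\,|z-x|^{\alpha-N-1}$ and the facts that $u^p\in L^\infty$ nearby and $u^p\in L^{s_0}$ with $s_0<N/\alpha$ at infinity make the contribution $\lesssim h^{\min\{\alpha,1\}}$ (up to a logarithm when $\alpha=1$). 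This yields $\Phi\in C^{0,\tau}(\R^N)$ for every $\tau\in(0,\min\{\alpha,1\})$.

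It remains to handle $u$ itself for $p\le 2$. On the open set $\{u>0\}$ the equation reads $F(u)=\Phi$ with $F(t):=t^{2-p}+t^{q-p}$, so $u=F^{-1}(\Phi)$ there. If $p<2$, then $F:[0,\infty)\to[0,\infty)$ is a strictly increasing bijection with $F'(t)\ge(2-p)t^{1-p}\to+\infty$ as $t\to0^+$ and $F'>0$ on $(0,\|u\|_\infty]$; thus $F'$ is bounded below by a positive constant there, $F^{-1}$ is Lipschitz on $[0,F(\|u\|_\infty)]$, and $u=F^{-1}\circ\Phi\in C^{0,\tau}(\{u>0\})$. If $p=2$, then $F(t)=1+t^{q-2}$, so $u=(\Phi-1)^{1/(q-2)}$ on $\{u>0\}$; the map $s\mapsto(s-1)^{1/(q-2)}$ has derivative $\tfrac1{q-2}(s-1)^{(3-q)/(q-2)}$, which is bounded on any interval $[1,S]$ when $q\le3$ so the map is Lipschitz there, while for $q>3$ it is $\tfrac1{q-2}$--H\"older near $s=1$; composing with $\Phi\in C^{0,\tau}$ then gives $u\in C^{0,\kappa(q)}(\{u>0\})$ with $\kappa(q)$ as in \eqref{order_hold_10}.

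The genuinely delicate point is the bootstrap: one has to verify that the HLS iteration keeps gaining integrability and terminates, which is exactly where the optimal assumption $q>\tfrac{2Np}{N+\alpha}$ enters (it is equivalent to $1/r_n$ decreasing along the affine recursion and never stabilising above $\alpha/(Np)$). Everything after $u\in L^\infty(\R^N)$ is routine --- the classical H\"older estimate for Riesz potentials of bounded densities, and the elementary fact that the inverse of $t\mapsto t^{2-p}+t^{q-p}$ is Lipschitz for $p<2$ and explicitly $\tfrac1{q-2}$--H\"older near its zero when $p=2$ and $q>3$.
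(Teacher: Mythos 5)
Your proposal is correct and uses a genuinely simpler bootstrap than the paper. The paper estimates the composite quantity $(I_\alpha*u^p)u^{p-1}$, splitting it via H\"older's inequality with three auxiliary exponents $\sigma,t,r$ and reading off $u\in L^{(q-1)\sigma}$ from the pointwise bound $u^{q-1}\le (I_\alpha*u^p)u^{p-1}$; this produces the recursion $\tfrac{1}{s_{n+1}}=\tfrac{2p-1}{(q-1)s_n}-\tfrac{\alpha}{N(q-1)}$ and forces a case analysis depending on whether $q$ is above, equal to, or below $Np/\alpha$. You instead drop the nonnegative term $u^{2-p}$ to get $u^{q-p}\le I_\alpha*u^p$ directly (valid a.e.\ for every $p$, since $u^{2-p}\ge 0$ on $\{u>0\}$), which gives the affine recursion $\tfrac1{r_{n+1}}=\tfrac{1}{q-p}\bigl(\tfrac{p}{r_n}-\tfrac{\alpha}{N}\bigr)$ with no H\"older step. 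Both recursions have the same fixed point $r_*=N(2p-q)/\alpha$ and both escape the interval $(p,Np/\alpha)$ under the sharp hypothesis $q>\tfrac{2Np}{N+\alpha}$, so the analysis is equivalent but yours avoids one layer of exponent bookkeeping. Your closing argument (Young's inequality on the split kernel, the classical H\"older estimate for $I_\alpha*u^p$, and the Lipschitz resp.\ $\tfrac{1}{q-2}$-H\"older inversion of $t\mapsto t^{2-p}+t^{q-p}$) matches the paper's.

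One small point you leave implicit: if the iteration hits $r_{n_0}=Np/\alpha$ exactly, then the formal next step gives $1/r_{n_0+1}=0$, and HLS at the endpoint does not yield $L^\infty$. The paper addresses this boundary case separately (Case~2 and the final sentence of Case~3). You can repair this cheaply: since $u\in L^2\cap L^{r_{n_0}}$, interpolation gives $u\in L^r$ for every $r\in[2,Np/\alpha)$, and running one more step from $r_0'=Np/\alpha-\delta$ with $\delta>0$ small produces $r_1'>Np/\alpha$; alternatively, by continuity of the affine map in the initial value one can perturb $r_0\in[2,q]$ to avoid ever landing on $Np/\alpha$. Worth a sentence, but not a real flaw.
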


\begin{proof}
	Assume $u\in L^{s}(\R^N)\cap  L^2(\R^N)\cap L^q(\R^N)$, where $s\in \big(p, \frac{Np}{\alpha}\big)$.
	Note that $u\in L^2(\R^N)\cap L^q(\R^N)$ implies that $I_\alpha*u^p$ is almost everywhere finite on $\R^N$.
	Moreover, by the HLS inequality, if $s\in (p, Np/\alpha)$ then $I_{\alpha}\ast u^p\in L^{\tau}(\R^N)$ where
	\begin{equation}\label{HLS}
		\frac{1}{\tau}=\frac{p}{s}-\frac{\alpha}{N}>0.
	\end{equation}
	Then \eqref{eq11} implies that $(I_{\alpha}\ast u^p)u^{p-1}\in L^{\sigma}(\R^N)$ for
  $\sigma \geq1$ and $(I_{\alpha}\ast u^p)u^{p-1}\in \mathcal L^{1/\sigma}(\R^N)$ for
  $\sigma \in (0, 1)$.\footnote{We denote $\mathcal L^t(\R^N)=\{f:\R^n\to\R:\int|f|^t
    dx<\infty\}$, where $t\in(0,1)$. Note that $\mathcal L^t(\R^N)$ is no longer
  a normed space for $t \in (0,1)$ because the triangle inequality does not hold.} 
	\smallskip
	
	Next, we split the argument in three cases:
	\smallskip
	
	{\sc Case 1: $q>\frac{Np}{\alpha}$.}  In this case, $u\in L^{\bar{q}}(\R^N)$ for some  $\bar{q}\in \left(\frac{Np}{\alpha},q\right ]$ such that $\alpha-\frac{N}{
		\bar{q}}<1$.  Then $I_{\alpha}*u^p\in L^{\infty}(\bbrn)$ and is H\"older continuous of order $\alpha-\frac{N}{\bar{q}}$ (cf \cite[Theorem~2]{P55}).  	
	
	{\sc Case 2: $q=\frac{Np}{\alpha}$.} Since in this case there exists $\epsilon>0$ small such that  $u\in L^{p\left(\tfrac{N}{\alpha}-\epsilon\right)}(\R^N)$, from  \eqref{HLS} we get $(I_\alpha* u^p)u^{p-1}\in L^{\sigma}(\bbrn)$ where 
	$$\frac{1}{\sigma}=\frac{2p-1}{p\left(N/\alpha-\epsilon\right)}-\frac{\alpha}{N}.$$
	Thus, recalling  that
	$$
	u^{q-1}\le u+u^{q-1}=(I_{\alpha}\ast u^p)u^{p-1}\quad\text{a.e.~in $\R^N$,}
	$$
	$u\in L^{(q-1)\sigma}(\bbrn)$ and $(q-1)\sigma>\frac{Np}{\alpha}$ provided 
	$0<\epsilon<\frac{N}{\alpha}\left(1-\frac{2p-1}{p+q-1}\right)$. Thus,
	$u^p\in L^{
		\frac{N}{\alpha}-\epsilon}(\bbrn)\cap L^{\tfrac{(q-1)\sigma}{p}}(\bbrn)$, therefore
	$I_\alpha*u^p\in L^\infty(\R^N)$ and is H\"older continuous of order $\gamma$ for some $\gamma\in (0,1]$.
	
	{\sc Case 3: $q\in (\frac{2Np}{N+\alpha}, \frac{Np}{\alpha})$}. Let's set $s_0:=q>p$ and
	\begin{equation}\label{eq10}
		\frac{1}{s_{n+1}}:=\frac{1}{(q-1)\sigma_n}=\frac{2p-1}{(q-1)s_n}-\frac{\alpha}{N(q-1)}.
	\end{equation}
	Then $u^p\in L^{\frac{s_0}{p}}$ and $(I_{\alpha}\ast u^p)u^{p-1}\in L^{\sigma_0}(\R^N)$.  
	Hence we conclude that $u\in L^{(q-1)\sigma_0}(\bbrn)=L^{s_1}(\bbrn)$. Note that $q>\frac{2Np}{N+\alpha}$ implies $s_1>s_0=q$. In particular,
  if $s_n<\frac{Np}{\alpha}$, an induction argument yields $s_n<s_{n+1}$. This proves that $(s_n)$ is monotone increasing, 
  as long as $s_n$ is between $q$ and $Np/\alpha$.

	We claim that, after finite steps, there exists $n_0\in \bbn$ such that $s_{n_0}\geq \frac{Np}{\alpha}$ and $s_n<\frac{Np}{\alpha}$ for all $n<n_0$.   If not, we can obtain a sequence $(s_n)$ satisfying \eqref{eq10} and $q<s_n<\frac{Np}{\alpha}$ for all $n\in \bbn$. 
  By the monotonicity of this sequence, we also conclude that 	$s_n$ 
  converges to the unique fixed point $s_* = N(2p-q)/\alpha > q$, which contradicts the condition $q > 2Np/(N+\alpha)$.
	
 Then, if $s_{n_0}>\frac{Np}{
		\alpha}$ we conclude that $I_{\alpha}*u^p\in L^{\infty}(\R^N)$ and is H\"older continuous of order $\alpha-\frac{N}{s_{n_0}}$. If $s_{n_0}=\frac{Np}{
		\alpha}$ we can argue as in the previous case and we still obtain boundedness and H\"older regularity of $I_{\alpha}*u^p$.
	\smallskip
	
	Next, from $I_{\alpha}*u^p\in L^{\infty}(\R^N)$ and the relation
	\begin{equation}\label{eq41_0}
		u^{2-p}+u^{q-p}= I_\alpha*u^p\quad \text{a.e. in $\left\{u>0\right\}$},
	\end{equation}
	we conclude that $u\in L^\infty(\R^N)$. 
	Therefore, $u\in L^s(\bbrn)$ for all $s\geq 2$ from which  $I_\alpha*u^p$ is H\"older continuous of order $\tau$ for any $\tau\in (0, \min\{1, \alpha\})$.
	
	Furthermore, if $p< 2$,  since the function $f(t)=t^{2-p}+t^{q-p}$ has a differentiable inverse on $(0,\infty)$ and  $u\in L^{\infty}(\R^N)$,  it follows from \eqref{eq41_0} that $u$ has the same H\"older regularity as $I_\alpha*u^p$ in $\left\{u>0\right\}$.
	
	Similarly, if $p=2$, the function $f(t)=1+t^{q-2}$ has a differentiable inverse on $(0,+\infty)$ if $q\le 3$, and a locally H\"older inverse of order $\frac{1}{q-2}$ if $q>3$. Then again from the boundedness of $u$ and \eqref{eq41_0} we obtain \eqref{order_hold_10}. 
\end{proof}

\begin{corollary}\label{regu_p_big_12-comp}
	Assume that	$p\ge 2$ and $q>\frac{2Np}{N+\alpha}$.
	Let $u\in L^2(\R^N)\cap L^q(\R^N)$ be a nonnegative radial nonincreasing solution of \eqref{eqTF}. Then $u$ is compactly supported.
\end{corollary}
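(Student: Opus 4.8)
The plan is to argue by contradiction, combining the dichotomy forced by radial monotonicity with the boundedness of $u$ (Lemma \ref{lem34}) and the pointwise decay of $I_\alpha*u^p$ (Lemma \ref{lemm_11}). Write $u(x)=u_0(|x|)$ with $u_0$ nonincreasing. Either $u_0(r)>0$ for every $r\ge 0$, or there exists $r_0\ge 0$ with $u_0(r_0)=0$, in which case monotonicity forces $u_0\equiv 0$ on $[r_0,\infty)$ and hence $\mathrm{Supp}(u)\subseteq\overline{B_{r_0}}$ is compact. So it suffices to exclude the first alternative when $p\ge 2$.

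Assume therefore that $u>0$ on all of $\R^N$. Since $u$ is radial and nonincreasing, $u_0(r)^2|B_r|\le\int_{B_r}u^2\,dx\le\|u\|_2^2$, so $u_0(r)\le C r^{-N/2}\to 0$; that is, $u(x)\to 0$ as $|x|\to\infty$. By Lemma \ref{lem34}, $u\in L^\infty(\R^N)$, hence for $p\ge 2$ we get $u^p\le\|u\|_\infty^{p-2}u^2\in L^1(\R^N)$; in particular $\int_{\R^N}u^p\,dx<\infty$, and Lemma \ref{lemm_11} then gives $(I_\alpha*u^p)(x)\to 0$ as $|x|\to\infty$, since $I_\alpha(x)=A_\alpha|x|^{\alpha-N}\to 0$.

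Now I would feed these asymptotics into the equation. As noted in the proof of Lemma \ref{lem34} (see \eqref{eq41_0}), on $\{u>0\}=\R^N$ the equation \eqref{eqTF} reduces, a.e., to
$$u^{2-p}+u^{q-p}=I_\alpha*u^p.$$
Take a sequence $(x_n)$ in the full-measure set where this holds with $|x_n|\to\infty$; then $u(x_n)\to 0$. Since $q>\tfrac{2Np}{N+\alpha}>p$ (because $\alpha<N$), the term $u^{q-p}$ is nonnegative, so the left-hand side is bounded below by $u(x_n)^{2-p}$, which equals $1+u(x_n)^{q-2}\ge 1$ if $p=2$ and tends to $+\infty$ if $p>2$ (as $2-p<0$). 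In either case the left-hand side stays bounded away from $0$, while the right-hand side tends to $0$ --- a contradiction. Hence $u$ cannot be positive on all of $\R^N$, and by the dichotomy $u$ is compactly supported.

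I do not expect a genuine obstacle here: the nontrivial analytic input (boundedness of $u$, continuity and decay of $I_\alpha*u^p$) is already supplied by Lemmas \ref{lem34} and \ref{lemm_11}, and the radial decay bound $u_0(r)\lesssim r^{-N/2}$ is elementary. The only point needing mild care is the selection of an admissible diverging sequence $(x_n)$ along which the a.e.\ identity holds. It is also illuminating to observe that this argument explains why $p=2$ is the threshold: for $p<2$ one has $2-p>0$, so $u^{2-p}\to 0$ is fully compatible with $I_\alpha*u^p\to 0$, and indeed (cf.\ Lemma \ref{lem51} and Corollary \ref{cor_12}) in that regime the support equals $\R^N$.
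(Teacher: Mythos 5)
Your proof is correct and follows essentially the same route as the paper: assume for contradiction that $u>0$ everywhere, invoke Lemma~\ref{lemm_11} for the decay of $I_\alpha*u^p$ at infinity and Lemma~\ref{lem34} for boundedness, and derive a contradiction from the governing equation. The only cosmetic difference is that you write the identity as $u^{2-p}+u^{q-p}=I_\alpha*u^p$ and argue that the left side stays bounded away from zero while the right side decays, whereas the paper keeps it as $1\le(I_\alpha*u^p)u^{p-2}$ and lets the bounded factor $u^{p-2}$ ride along with the decaying Riesz potential --- the two are algebraically equivalent.
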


\begin{proof}
	Since $u$ is radially nonincreasing, by an abuse of notation we still denote $u(r)=u(|x|)$ where $r=|x|$.  Now, since $u(r)$ is a nonincreasing function, it can have at most a countable number points of discontinuity. Then, without loss of generality, if $r'$ is a discontinuity point, we define
	\begin{equation}\label{re_def}
		u(r'):=\lim_{r\to {r'}^{+}}u(r).
	\end{equation}
	Note that the above limit exists by monotonicity of $u(r)$ and, by doing this, we are only modifying $u$ on a set of measure zero.  In fact, 
	\begin{equation}\label{liminf_13}
		u(r')=\liminf_{r\to r'}u(r) ,
	\end{equation}
	which makes $u$ a lower semi-continuous function, and the set $\{u>0\}$ is open.
	
	Arguing by contradiction, we assume that $\left\{u>0\right\}=\R^N$.  Since $u$ is nonnegative and satisfies \eqref{eqTF}, we have 
	\begin{equation}\label{eq15}
		1\leq 1+u^{q-2}=(I_{\alpha}*u^p)u^{p-2}\quad \forall \ x \in \R^N.
	\end{equation}
	On the other hand, $I_{\alpha}*u^p$ vanishes at infinity by \eqref{l-potential-0}, and $u\in L^\infty(\R^N)$ by Lemma \ref{lem34}. Hence there exist $R>0$  such that $(I_{\alpha}*u^p)u^{p-2}<1$ in $ B_{{R}}^c$, a contradiction to \eqref{eq15}.
\end{proof}

Next we show that when $p>2$, nonnegative solutions of \eqref{eqTF} are discontinuous at the boundary of the support and H\"older continuous inside the support.

\begin{lemma}\label{regu_p_big_12}
	Assume that	$p> 2$ and $q>\frac{2Np}{N+\alpha}$.
	Let $u\in L^2(\R^N)\cap L^q(\R^N)$ be a nonnegative radial nonincreasing solution of \eqref{eqTF}. Then there exists 
	\begin{equation}\label{eq-lambda}
	\lambda\ge\lambda_*:=\left(\frac{p-2}{q-p}\right)^{\frac{1}{q-2}}
	\end{equation}
	such that $\left\{u>0\right\}=\left\{u>\lambda\right\}$ and $u$ is H\"older continuous of order $\kappa(p,q,\lambda)$ in $\left\{u>0\right\}$, where  
	\begin{equation}\label{order_hold}
		\kappa(p,q,\lambda)=
		\begin{cases} \tau, & \mbox{if } \lambda>\lambda_*, \\ 
			\tfrac{\tau}{2}, & \mbox{if }\lambda=\lambda_*,
		\end{cases}
	\end{equation}
	for every $\tau\in (0,\min\left\{\alpha,1\right\})$.
\end{lemma}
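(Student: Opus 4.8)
The plan is to exploit the pointwise identity obtained from \eqref{eqTF} on the open set $\{u>0\}$, namely $g(u)=h$ with
\[
g(t):=t^{2-p}+t^{q-p}\quad(t>0),\qquad h:=I_\alpha*u^p ,
\]
combining it with the regularity of $h$ from Lemma \ref{lem34} and the shape of the support from Corollary \ref{regu_p_big_12-comp}. First I would record the structural facts. As in the proof of Corollary \ref{regu_p_big_12-comp} we work with the lower semicontinuous representative of $u$, so that $\{u>0\}$ is open; being radial and, since $u$ is nonincreasing, of the form $\{|x|<R\}$, with $R\in(0,\infty)$ because $u\not\equiv0$ and $u$ is compactly supported by Corollary \ref{regu_p_big_12-comp}, we have $\{u>0\}=B_R$. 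By Lemma \ref{lem34}, $u\in L^\infty(\R^N)$ and $h\in C^{0,\tau}(\R^N)$ for every $\tau\in(0,\min\{\alpha,1\})$, and on $B_R$ the equation \eqref{eqTF} divided by $u^{p-1}$ reads $g(u)=h$, which is \eqref{eq41_0}. Elementary calculus, using that $p>2$ and $q>\tfrac{2Np}{N+\alpha}$ force $q>p$ and $q>2$, shows that $g\in C^\infty((0,\infty))$, that $g(t)\to\infty$ both as $t\to0^+$ and as $t\to\infty$, that $g$ has a unique critical point at $\lambda_*=\big(\tfrac{p-2}{q-p}\big)^{1/(q-2)}$ which is its global minimum $m:=g(\lambda_*)$, that $g$ is strictly decreasing on $(0,\lambda_*)$ and strictly increasing on $(\lambda_*,\infty)$, and that $g''(\lambda_*)=(q-p)(q-2)\lambda_*^{\,q-p-2}>0$.

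The technical heart of the argument is to prove that $h$ is strictly decreasing in $|x|$ on $[0,\infty)$; here I use that $u^p$ is radial nonincreasing and positive precisely on $B_R$. For $0\le s<t$, evaluating $I_\alpha*u^p$ at the points $se_1,te_1$ on a coordinate axis and folding the integral representation of $h(s)-h(t)$ across the hyperplane $\{y_1=(s+t)/2\}$ via the reflection $y\mapsto\bar y$, one obtains
\[
h(s)-h(t)=A_\alpha\int_{\{y_1<(s+t)/2\}}\big(u^p(y)-u^p(\bar y)\big)\big(|se_1-y|^{\alpha-N}-|te_1-y|^{\alpha-N}\big)\,dy\ \ge\ 0,
\]
since on that half-space $|se_1-y|<|te_1-y|$ while $|\bar y|\ge|y|$, so both factors are nonnegative; and choosing $y=(0,y')$ with $R^2-(s+t)^2<|y'|^2<R^2$ (a nonempty range as $s+t>0$), hence $|y|<R$ but $|\bar y|^2=(s+t)^2+|y'|^2>R^2$, gives $u^p(y)>0=u^p(\bar y)$ on a set of positive measure, which makes the inequality strict.

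Granting this, I would identify $\lambda$. Set $\lambda:=\lim_{r\to R^-}u(r)\in[0,\infty)$, which exists by monotonicity and boundedness of $u$. Since $g(u(r))=h(r)\to h(R)<\infty$ as $r\to R^-$ by continuity of $h$, whereas $g(t)\to\infty$ as $t\to0^+$, necessarily $\lambda>0$, and then $g(\lambda)=h(R)$ by continuity of $g$. If $\lambda<\lambda_*$, then $u(r)\in[\lambda,\lambda_*)$ for $r$ close to $R$, where $g$ is strictly decreasing, so $h(r)=g(u(r))\le g(\lambda)=h(R)$, contradicting the strict monotonicity of $h$; hence $\lambda\ge\lambda_*$, which is \eqref{eq-lambda}. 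Likewise, if $u(r_0)=\lambda$ for some $r_0<R$, then $u\equiv\lambda$ on $\{r_0<|x|<R\}$ by monotonicity, hence $h\equiv g(\lambda)$ there, again contradicting strict monotonicity of $h$; therefore $u(r)>\lambda$ for all $r<R$, that is, $\{u>0\}=\{u>\lambda\}$.

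Finally, for the H\"older regularity: on $B_R$ we have $u(r)>\lambda\ge\lambda_*$ together with $g(u(r))=h(r)$, so $u=G\circ h$, where $G$ is the inverse of the strictly increasing bijection $g|_{[\lambda_*,\infty)}\colon[\lambda_*,\infty)\to[m,\infty)$. The function $h$ is continuous on the compact set $\overline{B_R}$, with range $[h(R),h(0)]$ by the strict monotonicity just proved. If $\lambda>\lambda_*$, then $h(R)=g(\lambda)>m$, so $G$ is $C^1$ — hence Lipschitz — on $[h(R),h(0)]$, and $u=G\circ h\in C^{0,\tau}(B_R)$. If $\lambda=\lambda_*$, then $h(R)=m$, and from $g(\lambda_*+\eta)-m=\tfrac12g''(\lambda_*)\eta^2+O(\eta^3)$ one gets $G'(s)\le C(s-m)^{-1/2}$ near $m$, which upon integration (using $\int_a^b(s-m)^{-1/2}\,ds=2(\sqrt{b-m}-\sqrt{a-m})\le2\sqrt{b-a}$) and combined with the $C^1$ bound on $G$ away from $m$ yields $G\in C^{0,1/2}([m,h(0)])$, hence $u=G\circ h\in C^{0,\tau/2}(B_R)$. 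This is precisely \eqref{order_hold}. I expect the strict monotonicity of $h=I_\alpha*u^p$ to be the main obstacle, since it is what drives both $\lambda\ge\lambda_*$ and $\{u>0\}=\{u>\lambda\}$; the degeneracy $g'(\lambda_*)=0$ (so that $G$ is only $\tfrac12$-H\"older at $m$) is what produces the two regimes in $\kappa(p,q,\lambda)$.
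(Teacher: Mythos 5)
Your proof is correct, and it takes a genuinely different route to the same conclusion. The key structural difference is in how you establish $\lambda\ge\lambda_*$ and $\{u>0\}=\{u>\lambda\}$: you first prove the single clean statement that $h=I_\alpha*u^p$ is \emph{strictly} decreasing in $|x|$ on $[0,R]$, via a reflection argument across the hyperplane $\{y_1=(s+t)/2\}$, and then everything — positivity of $\lambda$, the inequality $\lambda\ge\lambda_*$, and the fact that $u>\lambda$ inside the support — falls out by comparing $h(r)$ against $h(R)$ using the shape of $g$. The paper instead argues by contradiction through a case analysis (Step 1: $u(0)>\lambda_*$; Step 2: $\lambda\ge\lambda_*$) that tracks possible discontinuity points of $u$ and invokes the explicit hypergeometric formula \eqref{riesz_13bis} for $I_\alpha*\chi_{B_{R_*}}$ to exclude constancy and to get strict decrease of the right-hand side of \eqref{riesz_14}. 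Your reflection argument is more elementary and self-contained (no special functions), and it cleanly bypasses the delicate bookkeeping of jump points; the paper's version, on the other hand, keeps the discussion pointwise and close to the explicit solution structure. The Hölder-regularity part at the end is essentially the same in both, reducing to the invertibility of $g$ on $[\lambda_*,\infty)$ and the Taylor expansion at $\lambda_*$. One small remark: your computation $g''(\lambda_*)=(q-p)(q-2)\lambda_*^{q-p-2}$ is correct, whereas the paper's stated value $p(q-p)^2\lambda_*^{q-p-2}$ appears to contain a slip; both are positive so nothing downstream changes, but your coefficient is the right one.
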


\begin{proof}
	Set $B_{R_*}:=\left\{u>0\right\}$, where $R_*<\infty$ in view of Corollary \ref{regu_p_big_12-comp}. Assume by contradiction that there exists a sequence $(r_n)\subset (0,R_*)$ such that  $r_n\rightarrow R_*$ and $u(r_n)\rightarrow 0$. Then, by \eqref{eq41_0},
	\begin{equation}\label{contrad_13}
		(I_{\alpha}*u^p)(r_n)= u(r_n)^{2-p}+u(r_n)^{q-p}\to \infty.
	\end{equation}
	However, from Lemma \ref{lem34}, the left hand side of \eqref{contrad_13} is bounded which leads to a contradiction. We have therefore proved that $u$ is far away from zero inside its support, or equivalently that there exists $\lambda>0$ such that $\left\{u>0\right\}=\left\{u>\lambda\right\}$.
	
	In what follows, we prove continuity of $u$ in $B_{R_*}$.
	First, we recall that  $I_{\alpha}*u^{p}$ is H\"older continuous by Lemma \ref{lem34}, and is radially nonincreasing, since $u$ is radially nonincreasing.
	Next,  we define the quantities
	\begin{equation}
		\lambda=\lim_{r\to R^{-}_*}u(r),\quad  \gamma=u(0).
	\end{equation}
	Note that $\lambda_{*}$, defined in \eqref{eq-lambda}, is the unique minimum of the function $f$ defined by 
	\begin{equation}\label{def_f}
		f(t)=t^{2-p}+t^{q-p}\qquad(t\in(0,+\infty)).
	\end{equation}
	To prove the continuity of $u$, as we will see shortly, it is enough to prove that 
	$\lambda\ge \lambda_*$. To this aim, we split the proof into two steps.
	\smallskip
	
	{\sc Step 1: $\gamma>\lambda_{*}$.}
	Assume by contradiction that $\gamma\le \lambda_*$. Since $u$ is radially nonincreasing, $u(r)\le \lambda_*$ for every $r\in (0,R_*)$. Furthermore, 
	since the function $f(t)=t^{2-p}+t^{q-p}$ is decreasing  in the interval $(0,\lambda_*]$,  we deduce that  $f(u(r))$ is non decreasing in $(0,R_*)$. Thus,
  from the equality $f(u(r))=(I_{\alpha}*u^p)(r)$ and  monotonicity of $I_{\alpha}*u^p$, and injectivity of $f$ (or strict monotonicity of $f$) the function $u$
  must be constant inside the support. Namely, $u(x)=\gamma \chi_{B_{R_*}}(x)$ and, from \eqref{eq41_0},
	\begin{equation}\label{eq_in_B}
    \gamma^{2-p}+\gamma^{q-p}=\gamma^{p} I_{\alpha}*\chi_{B_{R_*}}\quad \mbox{ in }\ B_{R_*}.
	\end{equation}
  In fact,  
  $I_{\alpha}*\chi_{B_{R_*}}$  can be written in terms of the Gauss Hypergeometric function as 
	\begin{equation}\label{riesz_13bis}
    (I_{\alpha}*\chi_{B_{R_*}})(x)=\frac{\Gamma((N-\alpha)/2)R_*^{\alpha}}{2^\alpha\Gamma(1+\alpha/2)\Gamma(N/2)}
    {}_{2}F_1\left(-\frac{\alpha}{2},\frac{N-\alpha}{2};\frac{N}{2};\frac{|x|^2}{R_*^2}\right),
	\end{equation}
  which is never a constant for $\alpha \in (0,N)$. We have therefore proved that $\gamma>\lambda_*.$

	\smallskip
	
	{\sc Step 2: $\lambda\ge \lambda_*$.} Assume that $\lambda<\lambda_*.$ First of all, we notice that $u$ can not be continuous in $(0,R_*)$. As a matter of fact, if $u$ is continuous, since by Step 1 we have that $\lambda_{*}\in (\lambda,\gamma) $,  the value $\lambda_*$ is achieved by $u$. Namely,  there exists $\bar{r}\in (0,R_*)$ such that $u(\bar{r})=\lambda_{*}$. Arguing as before, since $u(r)$ is nonincreasing, $f$ is decreasing in $[\lambda,\lambda_{*}]$, and $(I_{\alpha}*u^p)(r)$ is nonincreasing.  We infer that $u(r)$ is constant for every $r\in [\bar{r},R_*)$. However, this implies that 
	$$\lambda=\lim_{r\to R^{-}_*}u(r)=u(\bar{r})=\lambda_{*},$$ which is a contradiction.
	
	Next, we show that  if  $r'$ is a discontinuity point  of $u(r)$, we must have that
	$u(r')\in [\lambda,\lambda_{*}].$
	Indeed, by \eqref{liminf_13}, if  $u(r')=L\in (\lambda_{*},\gamma]$ then for every $\epsilon>0$ sufficiently small there exists $\delta>0$ such that $u(r)\ge L-\epsilon$ for every $r\in (r'-\delta,r'+\delta)$. In particular, if we choose $\epsilon$ such that $L-\epsilon>\lambda_{*}$, we deduce that  $u((r'-\delta,r'+\delta))\subset (\lambda_{*}, \gamma]$. But in this interval the function $f$ is invertible with continuous inverse. Then, 
	$$u(r)=f^{-1}\big((I_{\alpha}*u^p)(r)\big)\quad  \forall r\in (r'-\delta,r'+\delta),$$
	which in particular implies continuity of $u$ at $r'$ and this is a contradiction.
	
	Next, in view of  monotonicity of $u(r)$, we conclude that $u([r', R_*))\subset  [\lambda,\lambda_{*}]$. Then, since in $[\lambda,\lambda_{*}]$ the function $f$ is decreasing, again monotonicity of $u$ implies that $u(r)=\lambda$ for every $r\in [r', R_*]$. Finally, it remains to prove that this is not possible and this will imply that $\lambda\ge \lambda_{*}$.
	
	Since we have assumed that  $u(r)$ is nonincreasing and  constant in $[r',R_*]$, there exists $\lambda_1>\lambda$ such that 
	\begin{equation}\label{replace_15}
		u^p=\lambda^{p}\chi_{B_{R_*}}+ \phi+ (\lambda^p_1-\lambda^p)\chi_{B_{r'}},
	\end{equation}
	where $\phi$ is a  radially nonincreasing function such that
	$\phi(r)=0$ if $r\ge r'$. 
	Thus, by combining \eqref{eq41_0} with \eqref{replace_15} we obtain the equality
	\begin{equation}\label{riesz_14}
		\lambda^{2-p}+\lambda^{q-p}=\lambda^{p}(I_{\alpha}*\chi_{B_{R_*}})(r)+(\lambda^p_1-\lambda^p)(I_{\alpha}*\chi_{B_{r'}})(r)+(I_{\alpha}*\phi)(r)\quad \forall r\in  (r', R_*).
	\end{equation}
	However, again by \eqref{riesz_13bis}, the right hand side of \eqref{riesz_14} is decreasing in $(R_*-\epsilon,R_*)$ for some  $\epsilon>0$  small enough and this contradicts \eqref{riesz_14}.
	\smallskip
	
	We have therefore proved that  $\lambda\ge \lambda_*$. Then on the set $[\lambda_*,\infty)$ the function $f$ has an inverse $f^{-1}:[\lambda_*,\infty)\to {[\Lambda_*,\infty)}$,  where we denote $\Lambda_*:=f(\lambda_*)$. We conclude that 
	\begin{equation}\label{eq-f-inv}
		{u=f^{-1}(I_{\alpha}*u^p)\quad\text{in $B_{R_*}$}.}
	\end{equation}
	To prove that the desired H\"older exponent given by \eqref{order_hold} we consider two different cases.
	\smallskip
	
	\textbf{Case a)}: $\lambda>\lambda_*$. 
	In this case $f$ is a Lipschitz function with Lipschitz inverse in the set $$u(B_{R_*}):=\left\{u(x):x\in B_{R_*}\right\}$$ 
	and by Lemma \ref{lem34} we have 
	$u=f^{-1}(I_{\alpha}*u^{p})\in C^{0,\tau}(B_{R_*})$ for every $\tau\in (0,\min\left\{1,\alpha\right\})$. 
	\smallskip
	
	\indent	\textbf{Case b)}: $\lambda=\lambda_{*}$. In this case, let's notice that
	$f''(\lambda_{*})=p(q-p)^2\lambda^{q-p-2}_{*}>0$,
	which means that if $\epsilon>0$ is small enough,  the following expansion holds
	\begin{equation}\label{taylor_15}
		f(t)=f(\lambda_*)+\frac12 f''(\lambda_*)(t-\lambda_*)^{2}+o\big((t-\lambda_*)^{2}\big) \quad \forall t\in (\lambda_*-\epsilon,\lambda_* +\epsilon).
	\end{equation}
	Let $f^{-1}$ be the inverse of $f$ on $[\lambda_{*},\infty)$.  
	Then, if for  $s\ge\Lambda_*$ we set $t:=f^{-1}(s)$, by \eqref{taylor_15} we obtain
	\begin{equation*}
		\lim_{s\to \Lambda_*^{+}}\frac{|f^{-1}(\Lambda_*)-f^{-1}(s)|}{|\Lambda_*-s|^{\frac{1}{2}}}
		= \lim_{t\to \lambda^{+}_*}\frac{|\lambda_*-t|}{\left|\frac12 f''(\lambda_*)(t-\lambda_*)^{2}+o(t-\lambda_*)^{2}\right|^{\frac{1}2}}
		=\sqrt{\frac{2}{f''(\lambda_*)}},
	\end{equation*}
	which proves that $f^{-1}$ is H\"older continuous of order $1/2$. Then using
	Lemma \ref{lem34} we obtain
	$u=f^{-1}(I_{\alpha}*u^{p})\in C^{0,\frac{\tau}{2}}(B_{R_*})$ for every $\tau\in (0,\min\left\{1,\alpha\right\})$.
\end{proof}

  \begin{remark}\label{r-lambda}
    Although the above discussion about the regularity depends on the jump $\lambda$ of the solution near the boundary of the support, compared with 
    $\lambda_*=\left(\frac{p-2}{q-p}\right)^{1/(q-2)}$. Numerical experiments suggests that $\lambda$ is always strictly larger than $\lambda_*$, as
    show in Figure~\ref{fig:regcomp}, although the difference becomes smaller for smaller $\alpha$.  
    \begin{figure}[htp]
      \label{fig:regcomp}
      \begin{center}
        \includegraphics[totalheight=0.26\textheight]{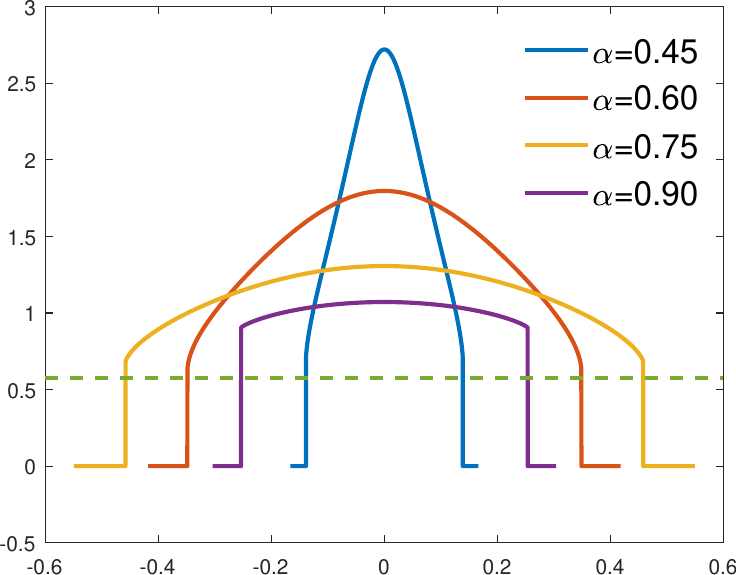}
      \end{center}
      \caption{The jump near the boundary compared with $\lambda^*$(the dashed line) in one dimension with $p=2.5, q=4$ and different $\alpha$.}
    \end{figure}
  \end{remark}

Finally, similarly to \cite{Carrillo-CalcVar}*{Theorem $10$} we show that nonnegative solutions of \eqref{eqTF} are smooth inside the support.

\begin{lemma}\label{smooth_support}
	Assume	that $p\ge 2$ and $q>\frac{2Np}{N+\alpha}$. Let $u\in L^2\cap L^q(\R^N)$ be a nonnegative radial nonincreasing solution of \eqref{eqTF}.  Then $u\in C^{\infty}$ inside its support.
\end{lemma}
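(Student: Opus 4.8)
\textbf{Proof proposal for Lemma \ref{smooth_support}.}

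The plan is a bootstrap argument exploiting the algebraic relation $u^{2-p}+u^{q-p}=I_\alpha*u^p$ inside the support, together with the smoothing properties of the Riesz potential. Let $\Omega:=\{u>0\}$, which by Corollary \ref{regu_p_big_12-comp} equals a ball $B_R$, and recall from Lemma \ref{lem34} and (when $p>2$) Lemma \ref{regu_p_big_12} that $u\in L^\infty(\R^N)$, that $u$ is H\"older continuous of some positive order on $\Omega$, and that $u$ is bounded below by a positive constant on every compact subset of $\Omega$ (indeed $u>\lambda>0$ throughout $\Omega$ when $p>2$, and $u(0)>0$ with $u$ radially nonincreasing in general). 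Define $g:=I_\alpha*u^p$ and $F(t):=t^{2-p}+t^{q-p}$, so that $u=F^{-1}(g)$ on $\Omega$. Since $u$ is bounded above and below by positive constants on any fixed $B_{R'}\Subset\Omega$, the argument $g=F(u)$ ranges in a compact subinterval of $(\Lambda_*,\infty)$ on which $F$ is real-analytic with nonvanishing derivative (because $F'(t)=(2-p)t^{1-p}+(q-p)t^{q-p-1}$ is nonzero for $t>\lambda_*$, strictly — here we use $\lambda>\lambda_*$ is not needed, only $t>\lambda_*$ on compact subsets, which holds since $\lambda\ge\lambda_*$ and $u$ is nonincreasing so $u>\lambda_*$ on $B_{R'}$ unless $\lambda=\lambda_*$, in which case one works on $B_{R'}$ with $R'<R$ where $u>\lambda_*$ strictly). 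Hence $F^{-1}$ is real-analytic on the relevant range, and $u$ and $g$ have the same interior regularity: $u\in C^{k,\gamma}_{loc}(\Omega)\iff g\in C^{k,\gamma}_{loc}(\Omega)$.

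The core step is the regularity gain $u\in C^{k,\gamma}_{loc}(\Omega)\implies g=I_\alpha*u^p\in C^{k+?,\gamma'}_{loc}(\Omega)$. I would split $g$ as $g=I_\alpha*(u^p\zeta)+I_\alpha*(u^p(1-\zeta))$ for a cutoff $\zeta\equiv1$ near a given point $x_0\in\Omega$ and supported in $\Omega$. The second term is the convolution of $I_\alpha$ with a function vanishing near $x_0$, hence is $C^\infty$ (indeed real-analytic) in a neighbourhood of $x_0$, since there the kernel $|x-y|^{\alpha-N}$ is smooth in $x$. For the first term, write $I_\alpha*(u^p\zeta)$ and use that convolution with the Riesz kernel $I_\alpha$ is, up to constants, the operator $(-\Delta)^{-\alpha/2}$: the classical mapping properties of the Riesz potential on H\"older spaces (e.g. Stein, or the local Schauder-type estimates for the fractional Laplacian) give that $I_\alpha*h\in C^{0,\beta+\alpha}$ when $h\in C^{0,\beta}$ with compact support and $\beta+\alpha\notin\N$, with the obvious shift for higher-order H\"older spaces; equivalently, $g$ solves $(-\Delta)^{\alpha/2}g=u^p$ (in the sense of the Riesz potential representation) locally, and interior regularity for the fractional Laplacian upgrades $g$ by $\alpha$ derivatives relative to $u^p$. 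Since $u^p$ inherits the H\"older regularity of $u$ (composition with $t\mapsto t^p$, smooth for $t$ in a compact positive interval), each pass of the loop $u\rightsquigarrow u^p\rightsquigarrow g\rightsquigarrow u$ gains a fixed positive amount $\approx\min\{\alpha,1\}$ of regularity. Iterating, $u\in C^{k}_{loc}(\Omega)$ for every $k$, hence $u\in C^\infty(\Omega)$.

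I expect the main obstacle to be bookkeeping the H\"older-space gain cleanly across integer thresholds: the naive statement "$I_\alpha$ gains $\alpha$ derivatives" fails when $\beta+\alpha$ is an integer, so one must either perturb $\tau$ slightly at each step (as is already done in Lemmas \ref{lem34} and \ref{regu_p_big_12}, where $\tau$ is taken in $(0,\min\{\alpha,1\})$) or invoke the mapping properties of $I_\alpha$ between Zygmund/Besov spaces to sidestep the integer obstruction, then re-enter H\"older scales. A second, minor point requiring care is that when $p>2$ and $\lambda=\lambda_*$ the relation $u=F^{-1}(g)$ loses analyticity exactly at $\partial\Omega$; this is harmless for \emph{interior} smoothness because on each $B_{R'}\Subset B_R$ one has $u\ge u(R')>\lambda_*$ by strict monotonicity (or $u$ is constant, which is trivially smooth), so $F^{-1}$ is analytic on the range of $g$ over $B_{R'}$. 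Everything else — the cutoff decomposition, smoothness of the ``far'' piece, and the composition estimates $u\in C^{k,\gamma}\implies u^p\in C^{k,\gamma}$ on compact positive intervals — is routine. This mirrors the strategy of \cite{Carrillo-CalcVar}*{Theorem 10} for the case $p=2$, the only new ingredient being the analyticity of $F^{-1}$ away from its critical point, which replaces the linear relation $1+u^{q-2}=g$ used there.
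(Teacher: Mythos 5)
Your bootstrap built on $u=F^{-1}(I_\alpha*u^p)$ is essentially the paper's own argument: the paper likewise restricts to a ball $\overline{B}\Subset\{u>0\}$ where $u$ lies in a compact subinterval of $(\lambda_*,\infty)$, invokes a H\"older-gain theorem for $I_\alpha$ (the Ros--Oton result, playing the same role as your cutoff decomposition and classical Riesz-potential mapping estimates), and iterates while perturbing the H\"older exponent to step past the integer thresholds you correctly flag. The one case you do not address is $2\le\alpha<N$, where the single-step gain of $\alpha$ derivatives cannot be read off from fractional-Laplacian interior regularity and instead uses the semigroup property of $I_\alpha$; the paper treats this separately by deferring to the Carrillo et al.\ argument for $p=2$.
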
 

\begin{proof}
	Assume first that $0<\alpha<2$.
	As in Lemma \ref{regu_p_big_12}, denote by $B_{R_*}=\left\{u>0\right\}$. Let $x\in B_{R_*}$ and $B$ be a ball centered at $x$, such that $\overline{B}\subset B_{R_*}$. By Lemma \ref{lem34}, we know that $I_{\alpha}*u^p\in C^{0,\tau}(\R^N)$ for every  $\tau\in (0, \min\{\alpha, 1\})$. Then, as in the proof of Lemma \ref{regu_p_big_12},  
	\begin{equation}\label{u_inverse}
		u=f^{-1}(I_{\alpha}*u^p),
	\end{equation}
	where $f^{-1}$ is the inverse of $f$ on $[\lambda_*,\infty)$. In particular, since in $\overline{B}$ the function $u$ is away from $\lambda$ if $p>2$ (respectively from $0$ if $p=2$), then $u$ (and so $u^p$) has the regularity of $I_{\alpha}*u^p$. Namely,  $u^p\in  C^{0,\tau}(\overline{B})$ for every  $\tau\in (0, \min\{\alpha, 1\})$. Then, \cite{Ros}*{Theorem $1.1$, Corollary $3.5$} yields $I_{\alpha}*u^p\in C^{0,\tau+\alpha}(\overline{B}_{1/2})$ for every  $\tau\in (0, \min\{\alpha, 1\})$, provided that $\tau+\alpha$ is not an integer. Here by $B_{1/2}$ we denoted the ball centered at $x$ with half of the radius of $B$, and as usual, $C^{0,\tau+\alpha}=C^{\gamma',\gamma''}$ with $\tau+\alpha=\gamma'+\gamma''$ and  $\gamma'$ is the largest integer less or equal than $\tau+\alpha$. Hence, again by \eqref{u_inverse},  we conclude that $u$ has the  regularity of $I_{\alpha}*u^p$. By iterating the above argument, for every $k\in \N$ we can find $j\in \N$ such that $\tau+j \alpha$ is non integer and bigger than $k$. This proves that $u\in C^{k}(\overline{B}_{1/2^{j}})$.
	Since $x$ was arbitrary, this implies that $u$ is smooth inside its support.
	
	If $2\le \alpha<N$ we can argue again similarly to the proof of \cite{Carrillo-CalcVar}*{Theorem  10}.
\end{proof}

\subsection{Support estimates for $p> 2$}

The following two statements follow from estimates in Proposition \ref{limit_alpha}.

\begin{corollary}\label{cor-supp-0}
	Let $p>2$ and $q>2p$. Let $u\in L^2(\R^N)\cap L^q(\R^N)$ be a nonnegative radial nonincreasing solution of \eqref{eqTF} and  $R_*$ be the radius of the support of $u$. Then 
	$R_*\to +\infty$ as $\alpha\to 0$. 
\end{corollary}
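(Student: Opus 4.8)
The plan is to combine the lower bound for the radius of support coming from the pointwise inequality $1 \le (I_\alpha * u^p) u^{p-2}$ with the upper bound $\mathscr{C}_{N,\alpha,p,q} \le \mathscr{C}_{N,\alpha} \to 1$ from Proposition \ref{limit_alpha}. First I would normalize: by scaling invariance of the quotient $\mathcal{R}_\alpha$, work with the rescaled ground state $u_*$ supplied by Lemma \ref{rescaling_1} (so $u_*$ actually solves \eqref{eqTF}), and recall from Lemma \ref{regu_p_big_12} that $\{u_*>0\}=B_{R_*}$ with $u_* = \lambda \chi_{B_{R_*}} + \phi$, where $\lambda \ge \lambda_* = \left(\frac{p-2}{q-p}\right)^{1/(q-2)}$. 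Since $u_*$ is radial nonincreasing and satisfies $u_*^{2-p} + u_*^{q-p} = I_\alpha * u_*^p$ on $B_{R_*}$, evaluating at the centre gives $u_*(0)^{2-p} \le I_\alpha * u_*^p(0) \le \|I_\alpha * u_*^p\|_\infty$, and since $p>2$ this yields a uniform (in $\alpha$, in suitable normalization) lower bound on $u_*(0)$ in terms of the sup-norm of the potential, but the cleaner route is to use the $L^\infty$ bound on $u_*$ directly.

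The key estimate is as follows. On $B_{R_*}$ we have $I_\alpha * u_*^p \ge f(\lambda_*) = \Lambda_* := \lambda_*^{2-p} + \lambda_*^{q-p} > 0$, a constant depending only on $p,q$ (not on $\alpha$). On the other hand, crudely bounding $I_\alpha * u_*^p$ from above: $I_\alpha * u_*^p(x) \le \|u_*\|_\infty^p \, (I_\alpha * \chi_{B_{R_*}})(0) = \|u_*\|_\infty^p \cdot c_{N,\alpha} R_*^\alpha$ for an explicit constant $c_{N,\alpha} = \frac{\Gamma((N-\alpha)/2)}{2^\alpha \Gamma(1+\alpha/2)\Gamma(N/2)}$, using \eqref{riesz_13bis} and that ${}_2F_1$ is maximized at the centre. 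So the engine is the chain
\[
\Lambda_* \le I_\alpha * u_*^p(0) \le c_{N,\alpha}\, \|u_*\|_\infty^p \, R_*^\alpha.
\]
Thus $R_*^\alpha \ge \Lambda_* / (c_{N,\alpha} \|u_*\|_\infty^p)$. To turn this into $R_* \to \infty$ as $\alpha \to 0$ I need control on $\|u_*\|_\infty$ and on how $c_{N,\alpha}$ behaves; here the normalization of $u_*$ matters, so I would instead phrase everything in terms of the scaling-invariant ground state and use the relation between $E(u_*)$, the constant $\mathscr{C}_{N,\alpha,p,q}$, and the norms $\|u_*\|_2$, $\|u_*\|_q$ established in the proof of Lemma \ref{rescaling_1}. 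Concretely, from $\mathcal{R}_\alpha(u_*) = \mathscr{C}_{N,\alpha,p,q} \to 1$ and the identities $\|u_*\|_q^q = \frac{1-\theta}{\theta}\|u_*\|_2^2$, $\mathcal{D}_\alpha(|u_*|^p,|u_*|^p) = \frac1\theta \|u_*\|_2^2$, one extracts that $\|u_*\|_2$ (hence all relevant norms) stays bounded above and below as $\alpha \to 0$; combined with the Strauss-type $L^\infty$ bound from Lemma \ref{lem34}, this gives a uniform upper bound on $\|u_*\|_\infty$. Since $\Lambda_*$ is independent of $\alpha$ and $c_{N,\alpha}$ stays bounded as $\alpha \to 0$ (indeed $c_{N,\alpha} \to 1/\Gamma(N/2)$ up to normalization), we conclude $R_*^\alpha \gtrsim 1$ with an $\alpha$-independent constant, and therefore $R_* \gtrsim (\text{const})^{1/\alpha} \to +\infty$ as $\alpha \to 0$.

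The main obstacle I anticipate is the bookkeeping of the normalization: the radius $R_*$ is only meaningful after fixing the scaling of the ground state, so I must make sure that the quantities entering the estimate ($\|u_*\|_\infty$, $c_{N,\alpha}$, and $\Lambda_*$) are all controlled uniformly in $\alpha$ for the chosen normalization. This is exactly where Proposition \ref{limit_alpha} (giving $\mathscr{C}_{N,\alpha,p,q} \to 1$) and the explicit formulas from the proof of Lemma \ref{rescaling_1} do the work — they pin down the norms of $u_*$ up to $\alpha$-independent constants — so the argument reduces to the elementary observation that a quantity bounded below by $(\text{const})^{1/\alpha}$ with $\text{const} > 1$ blows up as $\alpha \to 0^+$. (If the constant one obtains is $\le 1$ one instead argues via a lower bound on $\|u_*\|_\infty$ or $u_*(0)$ forcing $R_*^\alpha$ bounded below, which again suffices; a careful choice of which elementary inequality to use will settle this.)
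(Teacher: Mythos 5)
The approach has a genuine gap, and the source of the trouble is precisely the step you wave away as bookkeeping.

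Your crucial assertion is that, with the normalization of Lemma~\ref{rescaling_1}, the norms of $u_*$ (in particular $\|u_*\|_2$ and hence $\|u_*\|_\infty$) ``stay bounded above and below as $\alpha\to 0$''. This is \emph{false}. The identity at the end of the proof of Lemma~\ref{rescaling_1} gives
\[
\mathscr{C}_{N,\alpha,p,q}=\mathcal{R}_\alpha(u_*)=\frac{1}{\theta}\Big(\frac{\theta}{1-\theta}\Big)^{2p(1-\theta)/q}\,\|u_*\|_2^{-2\alpha/N},
\]
so that $\|u_*\|_2^{2\alpha/N}\to K_0:=\frac{1}{\theta_0}\big(\frac{\theta_0}{1-\theta_0}\big)^{2p(1-\theta_0)/q}$ as $\alpha\to 0$, where $\theta_0=\frac{q-2p}{p(q-2)}$. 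A short computation (using $\theta_0+2(1-\theta_0)/q=1/p$) shows $K_0=2p\,\overline{\theta_*}$, the very constant the paper shows to be strictly larger than $1$ under $p>2$, $q>2p$. Since $K_0>1$ and the exponent $2\alpha/N\to 0$, one gets $\|u_*\|_2\to+\infty$, the opposite of your claim. This is not a technicality: the entire mechanism of the paper's proof is that a scaling-invariant quantity ($\sigma_*(\alpha)/\alpha$) blows up because this limit exceeds $1$, and that blow-up is exactly what forces the support to spread.

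With the false boundedness assumption removed, your chain $\Lambda_*\le (I_\alpha*u_*^p)(0)\le c_{N,\alpha}\|u_*\|_\infty^p R_*^\alpha$ gives $R_*^\alpha\gtrsim \|u_*\|_\infty^{-p}$, which becomes useless when $\|u_*\|_\infty$ is large. The more careful pointwise bound $(I_\alpha*u_*^p)(0)=u_*(0)^{2-p}+u_*(0)^{q-p}\ge u_*(0)^{q-p}$ is the right ingredient (and it is exactly the paper's step \eqref{gamma_0}, using $q>2p$ to divide through by $u_*(0)^p$), yielding $R_*^\alpha\gtrsim u_*(0)^{q-2p}/c_{N,\alpha}$. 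But you still need to know $u_*(0)\to\infty$, and your fallback — ``a lower bound on $\|u_*\|_\infty$ or $u_*(0)$ forcing $R_*^\alpha$ bounded below'' — only gives $R_*^\alpha\ge \lambda_*^{q-2p}/c_{N,\alpha}$, which need not exceed $1$ for general admissible $(N,p,q)$; a bound $R_*^\alpha\ge c$ with $c\le 1$ gives nothing as $\alpha\to 0$. The logical step ``$R_*^\alpha\gtrsim 1$ therefore $R_*\to\infty$'' in your conclusion is invalid without a constant strictly larger than~$1$.

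For comparison, the paper avoids $\|u_*\|_\infty$ altogether: it uses the Nehari/Poho\v{z}aev combination to write $\sigma_*(\alpha)$ in terms of $\|u\|_q^q$, bounds $\|u\|_q^q<u(0)^q|B_{R_*}|$ by monotonicity, bounds $u(0)$ from above by $(A_\alpha\omega_N R_*^\alpha/\alpha)^{1/(q-2p)}$ exactly as you do, and then lets $\sigma_*(\alpha)/\alpha\to\infty$ drive $R_*\to\infty$. That last divergence — which follows from \eqref{rel_minimi}, Proposition~\ref{limit_alpha}, and $2p\overline{\theta_*}>1$ — is the real content, and it is exactly the piece your proposal elides by (incorrectly) assuming norm boundedness.
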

	\begin{proof}
    By combining the Nehari identity~\eqref{Nehari} and 
    Poho\v zaev  identity $\mathcal{P}(u)=0$, we get
\begin{equation}\label{L_2_less_L_q}
	\|u\|^{2}_2=\frac{(N+\alpha)q-2pN}{q(Np-N-\alpha)} \|u\|^q_q.
\end{equation}
As a result, the minimal energy can also be represented using $\|u\|_q$, that is,
\begin{equation}\label{sigma_19}
	\sigma_*=\inf_{v\in \mathscr{P}}E(v)=E(u)=\frac{\alpha(q-2)}{2(Np-N-\alpha)q}\|u\|^{q}_q.
\end{equation}
In what follows we will write $\sigma_*=\sigma_*(\alpha)$ stressing the dependence on $\alpha$.
Moreover, by the monotonicity of $u(|x|)$ in $|x|$,
\begin{equation}\label{2_sided}
	\|u\|^{q}_q< u^{q}(0)|B_{R_*}|.
\end{equation}
Evaluating the governing equation \eqref{eqTF} at $0$, we estimate
\begin{equation}\label{gamma_0}
  u^{q-p}(0)\le u^{2-p}(0)+u^{q-p}(0)=(I_{\alpha}*u^p)(0),
\end{equation}
from which 
\begin{equation*}
	u(0)\le \big((I_{\alpha}*u^p)(0)\big)^{\frac{1}{q-p}}
  =\left(A_\alpha \int_{B_{R_*}}u^p(y)|y|^{\alpha-N}dy\right)^{\frac{1}{q-p}}\le u^{\frac{p}{q-p}}(0)
  \left(A_\alpha \omega_{N} \tfrac{R^{\alpha}_*}{\alpha}\right)^{\frac{1}{q-p}},
\end{equation*}
where $\omega_N$ is the surface area of the unit sphere in $\R^N$. 
The previous inequality leads to the bound 
\begin{equation*}
	u(0)\le \left(A_\alpha \omega_{N} \tfrac{R^{\alpha}_*}{\alpha}\right)^{\frac{1}{q-2p}}.
\end{equation*}
This estimate, when combined with  \eqref{2_sided}, turns the relation \eqref{sigma_19}
into 
\begin{equation}\label{low_radius}
	|B_{R_*}|R^{\frac{\alpha q}{q-2p}}_*\ge  \frac{2q(Np-N-\alpha)}{\alpha(q-2)} \left(\frac{A_\alpha \omega_{N}}{\alpha}\right)^{-\tfrac{q}{q-2p}} \sigma_*(\alpha)
\end{equation}
Next, if we consider $\theta_*=\theta_*(\alpha)$ defined as in \eqref{theta-star}, we have that 
\begin{equation}
	\lim_{\alpha\to 0} \theta_*(\alpha)=\left(\frac{q(p-1)}{q-2p}\right)^{\frac{q-2p}{2(p-1)+q-2p}} \left(\frac{q-2p}{2q(p-1)}+\frac{1}{q}\right)=\overline{\theta_*}.
\end{equation}
Furthermore,  under our assumptions on $p$ and $q$, we have 
$ \lim_{\alpha\to 0 } 2p \theta_*(\alpha)=2p \overline{\theta_*}>1.$
Moreover, from \eqref{low_radius} we deduce that
\begin{equation}
	R_*\ge \omega^{-\frac{2(q-p)}{q(N+\alpha)-2Np}}_{N} \left(\frac{2q(Np-N-\alpha)}{(q-2)}\right)^{\frac{q-2p}{q(N+\alpha)-2Np}} \left(\frac{A_\alpha}{\alpha}\right)^{-\tfrac{q}{q(N+\alpha)-2Np}} \left(\frac{\sigma_*(\alpha)}{\alpha}\right)^{\frac{q-2p}{q(N+\alpha)-2Np}}.
\end{equation}
Since $\lim_{\alpha\to 0}\alpha ^{-1} A_{\alpha}=\omega^{-1}_{N}$, it remains to prove that $\lim_{\alpha\to 0} \alpha^{-1}\sigma_*(\alpha)=\infty$. To do so, from   \eqref{rel_minimi}, \eqref{low_radius} and Proposition \ref{limit_alpha} we infer 
\begin{equation}
	\lim_{\alpha\to 0}\frac{\sigma_*(\alpha)}{\alpha}= \frac{\overline{\theta_*}}{N} \lim_{\alpha\to 0} \left(\frac{N}{N+\alpha}\frac{2p \theta_*(\alpha)}{\mathscr{C}_{N,\alpha,p,q}}\right)^{\frac{N}{\alpha}}=\infty, 
\end{equation}
where in the last equality we used that 
$$ \lim_{\alpha\to 0} \left(\frac{N}{N+\alpha}\frac{2p \theta_*(\alpha)}{\mathscr{C}_{N,\alpha,p,q}}\right)=2p \overline{\theta_*}>1.$$
This concludes the proof.
\end{proof}
	
\begin{corollary}\label{cor-supp-N}
Let $p>2$. Let $u\in L^2(\R^N)\cap L^q(\R^N)$ be a nonnegative radial nonincreasing solution of \eqref{eqTF} and $R_*$ be the radius of the support of $u$. Then, $R_*\to 0$ as $\alpha\to N$.
\end{corollary}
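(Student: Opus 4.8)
The plan is to argue along the same lines as the proof of Corollary~\ref{cor-supp-0}, but with the direction of the key estimate reversed: for $\alpha\to N$ we need an \emph{upper} bound on $|B_{R_*}|$, and this is supplied by the uniform lower bound $u\ge\lambda_*$ on the support, which is available precisely because $p>2$ (Lemma~\ref{regu_p_big_12}).

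First I would recall, exactly as in the proof of Corollary~\ref{cor-supp-0}, that combining the Nehari identity~\eqref{Nehari} with the Poho\v zaev identity $\mathcal{P}(u)=0$ gives~\eqref{L_2_less_L_q} and hence the representation~\eqref{sigma_19} of the ground state energy $\sigma_*=\sigma_*(\alpha)$ in terms of $\|u\|_q^q$ alone,
\[
  \sigma_*(\alpha)=\frac{\alpha(q-2)}{2(Np-N-\alpha)q}\,\|u\|_q^q ,
\]
where $Np-N-\alpha>0$ since $p>2$ and $\alpha<N$. Next, since $p>2$, Lemma~\ref{regu_p_big_12} applies: writing $B_{R_*}=\{u>0\}$ (with $R_*<\infty$ by Corollary~\ref{regu_p_big_12-comp}), there is $\lambda\ge\lambda_*=\big(\tfrac{p-2}{q-p}\big)^{1/(q-2)}$ with $\{u>0\}=\{u>\lambda\}$, so that $u\ge\lambda_*$ everywhere on $B_{R_*}$. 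This yields $\|u\|_q^q\ge\lambda_*^q\,|B_{R_*}|$ and therefore
\[
  |B_{R_*}|\le\frac{2(Np-N-\alpha)q}{\alpha(q-2)\,\lambda_*^q}\,\sigma_*(\alpha).
\]
The coefficient on the right stays bounded as $\alpha\to N$ (it tends to $\tfrac{2(p-2)q}{(q-2)\lambda_*^q}$, and $\lambda_*$ is independent of $\alpha$), so it suffices to prove $\sigma_*(\alpha)\to 0$ as $\alpha\to N$.

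To this end I would use the identity~\eqref{rel_minimi},
\[
  \sigma_*(\alpha)=\alpha\,(2Np)^{N/\alpha}\Big(\tfrac{\theta_*(\alpha)}{N+\alpha}\Big)^{(N+\alpha)/\alpha}\,\mathscr{C}_{N,\alpha,p,q}^{-N/\alpha},
\]
together with Proposition~\ref{limit_alpha}. Since $A_\alpha^{-1}\mathscr{C}_{N,\alpha,p,q}\to 1$ and $A_\alpha=\frac{\Gamma((N-\alpha)/2)}{\pi^{N/2}2^\alpha\Gamma(\alpha/2)}\to+\infty$ as $\alpha\to N$ (because $\Gamma((N-\alpha)/2)\to+\infty$), we get $\mathscr{C}_{N,\alpha,p,q}\to+\infty$, hence $\mathscr{C}_{N,\alpha,p,q}^{-N/\alpha}\to 0$. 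The remaining factors stay bounded: $\alpha\to N$, $(2Np)^{N/\alpha}\to 2Np$, and $\theta_*(\alpha)$ converges to a finite positive limit as $\alpha\to N$, which one checks directly from the formula~\eqref{theta-star} using that $\theta(\alpha)\to\frac{2(q-p)}{p(q-2)}\in(0,1)$. Therefore $\sigma_*(\alpha)\to 0$, and combined with the displayed bound this gives $|B_{R_*}|\to 0$, i.e.\ $R_*\to 0$.

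The only step requiring genuine care is the last one: confirming that $\theta_*(\alpha)$ remains bounded away from $0$ and $+\infty$ near $\alpha=N$, so that the blow-up of $\mathscr{C}_{N,\alpha,p,q}$ (equivalently, of $A_\alpha$) is what forces $\sigma_*(\alpha)\to 0$. Everything else is a direct transcription of identities already established, with the pointwise lower bound $u\ge\lambda_*$ on $B_{R_*}$ playing the role that the upper bound on $u(0)$ played in the case $\alpha\to 0$.
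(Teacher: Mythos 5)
Your argument is a correct and faithful reconstruction of the paper's proof: you derive exactly the bound \eqref{r_N} by combining \eqref{sigma_19} with the lower bound $u\ge\lambda_*$ from Lemma~\ref{regu_p_big_12}, and you then deduce $\sigma_*(\alpha)\to 0$ from \eqref{rel_minimi} and Proposition~\ref{limit_alpha}. The only difference is that you spell out why $\mathscr{C}_{N,\alpha,p,q}\to\infty$ (via $A_\alpha\to\infty$) and verify that $\theta_*(\alpha)$ stays bounded, steps the paper leaves implicit.
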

\begin{proof}
By combining  \eqref{sigma_19}, Lemma \ref{regu_p_big_12} with the monotonicity of $u$ we obtain that 
\begin{equation}\label{r_N}
  |B_{R_*}|\le  \left(\frac{q-p}{p-2}\right)^{\frac{q}{q-2}}\,\frac{2 (Np-N-\alpha)q}{\alpha(q-2)} \sigma_*(\alpha)
\end{equation}
Furthermore, by combining Proposition \ref{limit_alpha} with \eqref{rel_minimi} we conclude that $\lim_{\alpha\to N}\sigma_*(\alpha)=0$. Thus the conclusion follows in view of \eqref{r_N}.
\end{proof}

\subsection{Gradient estimates for $p<2$}
In the rest of the section we consider the case $\frac{N+\alpha}{N}<p< 2$. Recall that in this case nonnegative radial nonincreasing solutions $u\in
L^2(\R^N)\cap L^q(\R^N)$ of \eqref{eqTF} are supported on $\R^N$. Our aim is to show that $\nabla u\in L^2(\R^N;\R^N)$. 

Note that for $\alpha>1$ the gradient $\nabla I_{\alpha}*u^p$ is well defined, while for $0<\alpha\leq 1$ it becomes a singular integral and is defined via the Cauchy principal value, namely 
\begin{equation}\label{grad_17}
  \nabla (I_{\alpha}*u^p)=\left\{\aligned &(\nabla I_{\alpha})*u^p,\quad
	&&\alpha>1,\\
	&\int_{\bbrn}\nabla|x-y|^{\alpha-N}(|u(y)|^p-|u(x)|^p)dy, \quad &&0<\alpha\leq 1, \\
	\endaligned\right.
\end{equation}
cf. \cite{Carrillo-CalcVar}*{eq. (1.2)}.
Recall the following result from \cite{Carrillo-CalcVar}*{Lemma 1}.

\begin{lemma}\label{lem621}
 Assume that $u\ge 0$ and $u^p\in L^1(\bbrn)\cap L^{\infty}(\bbrn)$.  Then 
	\begin{itemize}
		\item [$(i)$] If $0<\alpha<N$, then $I_{\alpha}*u^p\in L^{\infty}(\bbrn)$.\smallskip
		
		\item [$(ii)$] If $0<\alpha\leq 1$ and $u^p\in C^{0, \gamma}(\bbrn)$ with $\gamma\in (1-\alpha, 1)$, or if $1<\alpha<N$ then $\nabla (I_{\alpha}*u^p)\in L^{\infty}(\bbrn)$, i.e., $I_{\alpha}*u^p\in W^{1, \infty}(\bbrn)$.
	\end{itemize}
\end{lemma}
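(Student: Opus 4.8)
The plan is to prove both parts by a single device: split each convolution over $\R^N$ into a near-field piece on $\{|x-y|<1\}$, where the (possibly singular) kernel is integrable against the bounded density $u^p$, and a far-field piece on $\{|x-y|\ge 1\}$, where the kernel is bounded and one invokes $u^p\in L^1(\R^N)$; all bounds obtained are uniform in $x$.

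For $(i)$, write $(I_\alpha*u^p)(x)=A_\alpha\big(\int_{|x-y|<1}+\int_{|x-y|\ge1}\big)|x-y|^{\alpha-N}u^p(y)\,dy$. The near-field integral is bounded by $\|u^p\|_\infty\int_{|z|<1}|z|^{\alpha-N}\,dz=\|u^p\|_\infty\,\omega_N/\alpha<\infty$ since $\alpha>0$, and the far-field integral is bounded by $\|u^p\|_1$ since $|x-y|^{\alpha-N}\le 1$ there; hence $I_\alpha*u^p\in L^\infty(\R^N)$.

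For $(ii)$ with $1<\alpha<N$ the gradient is the genuine convolution $\nabla(I_\alpha*u^p)=(\nabla I_\alpha)*u^p$ with $|\nabla I_\alpha(x)|=A_\alpha(N-\alpha)|x|^{\alpha-N-1}$, and the same splitting works: the near-field term is controlled by $\|u^p\|_\infty\int_{|z|<1}|z|^{\alpha-N-1}\,dz$, finite precisely because $\alpha>1$, while the far-field term is at most $A_\alpha(N-\alpha)\|u^p\|_1$. For $(ii)$ with $0<\alpha\le1$ one works from the principal-value expression \eqref{grad_17}: on $\{|x-y|<1\}$ the Hölder bound $|u^p(y)-u^p(x)|\le[u^p]_{C^{0,\gamma}}|x-y|^\gamma$ dominates the integrand by a constant times $|x-y|^{\alpha-N-1+\gamma}$, integrable near the origin exactly when $\gamma>1-\alpha$; on $\{|x-y|\ge1\}$, the oddness of $\nabla_x|x-y|^{\alpha-N}$ under $y\mapsto 2x-y$ gives $\int_{|x-y|\ge1}\nabla_x|x-y|^{\alpha-N}\,dy=0$, so the subtracted constant disappears and the far-field integral reduces to $\int_{|x-y|\ge1}\nabla_x|x-y|^{\alpha-N}u^p(y)\,dy$, again bounded by $(N-\alpha)\|u^p\|_1$. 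Uniformity in $x$ in all the above estimates gives the claimed $L^\infty$ bounds.

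The only delicate point is the balance at the singularity in $(ii)$: the gradient kernel has homogeneity $\alpha-N-1$, non-integrable near the origin unless damped, and when $\alpha\le1$ the factor $|x-y|^\gamma$ coming from Hölder continuity of $u^p$ supplies exactly the missing room, the hypothesis $\gamma>1-\alpha$ being equivalent to $\alpha-N-1+\gamma>-N$. Everything here is elementary and is carried out in detail in \cite{Carrillo-CalcVar}*{Lemma 1}; we have recorded the statement for convenience, since it is used repeatedly below.
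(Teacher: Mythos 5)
The paper itself offers no proof of this lemma: it is quoted verbatim from \cite{Carrillo-CalcVar}*{Lemma~1}, and your closing sentence shows you are aware of that. Your blind proof reconstructs the standard near-/far-field splitting that the cited reference uses, and it is essentially correct: the near-field estimates exploit the integrability of $|z|^{\alpha-N}$ (resp.\ $|z|^{\alpha-N-1}$, resp.\ $|z|^{\alpha-N-1+\gamma}$) around the origin under the stated hypotheses on $\alpha$ and $\gamma$, while the far-field estimates use only $u^p\in L^1(\R^N)$ and the boundedness of the kernels on $\{|z|\ge 1\}$, so the resulting $L^\infty$ bounds are uniform in $x$. Two small points worth flagging. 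First, in the far-field bound for $(ii)$ you dropped the normalizing constant $A_\alpha$; the bound should read $(N-\alpha)A_\alpha\|u^p\|_1$. Second, and slightly more substantively, your cancellation $\int_{|x-y|\ge 1}\nabla_x|x-y|^{\alpha-N}\,dy=0$ by oddness is an absolutely convergent Lebesgue integral only when $\alpha<1$; at the endpoint $\alpha=1$ one has $\int_{|z|\ge 1}|z|^{\alpha-N-1}\,dz\sim\int_1^\infty r^{-1}\,dr=\infty$, so the vanishing must be read as a principal value at infinity (symmetric truncations $\{1\le|z|\le R\}$), which is how the formula \eqref{grad_17} should be interpreted there. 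Once that interpretation is made explicit, the argument goes through unchanged.
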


Using the estimates of Lemma \ref{lem621}, we first show that positive solutions of \eqref{eqTF} are globally Lipschitz.

\begin{lemma}\label{lem622}
	Assume that	$\frac{N+\alpha}{N}<p< 2$ and $q>\frac{2Np}{N+\alpha}$. 
  Let $u\in L^2(\R^N)\cap L^q(\R^N)$ be a positive radial nonincreasing solution of \eqref{eqTF}.
  Then $I_{\alpha}*u^p\in W^{1, \infty}(\bbrn)$ and $u\in W^{1, \infty}(\bbrn)$.
\end{lemma}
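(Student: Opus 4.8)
The plan is to bootstrap the regularity of $u$ from the known regularity of $I_\alpha * u^p$, using the governing equation \eqref{eqTF} rewritten as $u = f^{-1}(I_\alpha * u^p)$ where $f(t) = t^{2-p}+t^{q-p}$ (here $f$ is a bijection of $(0,\infty)$ onto $(f_{\min},\infty)$ only locally, but since $u$ is positive and radially nonincreasing, on any bounded region $u$ takes values in a compact subinterval of $(0,\infty)$ bounded away from $0$). First I would record the a priori facts already available from earlier results: by Corollary \ref{cor_12}, $u\in L^1(\R^N)$ and $u(x)|x|^{(N-\alpha)/(2-p)}$ has a finite positive limit, so in particular $u\in L^\infty(\R^N)$ by Lemma \ref{lem34}, hence $u^p\in L^1(\R^N)\cap L^\infty(\R^N)$ and Lemma \ref{lem621}(i) applies. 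The point then is to upgrade to the $W^{1,\infty}$ conclusion via Lemma \ref{lem621}(ii).

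The argument splits according to $\alpha$. If $\alpha>1$, Lemma \ref{lem621}(ii) directly gives $I_\alpha * u^p \in W^{1,\infty}(\R^N)$ with no further input; this is the easy case. If $0<\alpha\le 1$, the hypothesis of Lemma \ref{lem621}(ii) requires $u^p\in C^{0,\gamma}(\R^N)$ for some $\gamma\in(1-\alpha,1)$. From Lemma \ref{lem34} we know $I_\alpha * u^p\in C^{0,\tau}(\R^N)$ for every $\tau\in(0,\min\{\alpha,1\})=(0,\alpha)$, and since $p<2$ the inverse function $f^{-1}$ is $C^1$ on the relevant range, so $u$ — and therefore $u^p$ — inherits H\"older continuity of order $\tau$ on $\{u>0\}=\R^N$. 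The obstruction is that $\tau<\alpha\le 1$, so a single application does not reach an exponent $\gamma>1-\alpha$ unless $2\alpha>1$, i.e. $\alpha>1/2$. Thus for $\alpha\in(1/2,1]$ one application of the bootstrap (pick $\tau\in(1-\alpha,\alpha)$, possible exactly when $2\alpha>1$) already yields $u^p\in C^{0,\gamma}$ with $\gamma>1-\alpha$, and Lemma \ref{lem621}(ii) closes the case. For $\alpha\in(0,1/2]$ one must iterate: knowing $u^p\in C^{0,\tau_0}$ with $\tau_0<\alpha$, apply a Schauder-type regularity gain for the Riesz potential — e.g. the statement (used already in the proof of Lemma \ref{smooth_support}, via \cite{Ros}) that convolution with $I_\alpha$ maps $C^{0,\beta}$ into $C^{0,\beta+\alpha}$ when $\beta+\alpha<1$ — to get $I_\alpha * u^p\in C^{0,\tau_0+\alpha}$, hence (since $f^{-1}\in C^1$) $u^p\in C^{0,\tau_1}$ with $\tau_1 = \min\{\tau_0+\alpha, \text{something}<1\}$. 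After finitely many steps $\tau_n$ exceeds $1-\alpha$, and Lemma \ref{lem621}(ii) applies.

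Once $I_\alpha * u^p\in W^{1,\infty}(\R^N)$ is established, the conclusion $u\in W^{1,\infty}(\R^N)$ follows by differentiating $u=f^{-1}(I_\alpha * u^p)$: since $f^{-1}$ is $C^1$ on compact subintervals of $(0,\infty)$ and $u$ is bounded above and below by positive constants on any compact set (globally bounded above; bounded below on compacts by monotonicity and positivity), the chain rule gives $\nabla u = (f^{-1})'(I_\alpha*u^p)\,\nabla(I_\alpha*u^p)$ a.e., and the right side is in $L^\infty$. One subtlety is behavior at infinity: there $u\to 0$ and $(f^{-1})'$ blows up, so one cannot naively multiply; but by Corollary \ref{cor_12}, $u\sim c|x|^{-(N-\alpha)/(2-p)}$, and a direct estimate of $\nabla(I_\alpha * u^p)$ at infinity (it decays like $|x|^{\alpha-N-1}$, as $\nabla I_\alpha$ does, by the decay of $u^p$ and standard tail estimates) shows the product $(f^{-1})'(I_\alpha*u^p)\nabla(I_\alpha * u^p)$ stays bounded — indeed $(f^{-1})'(s)\sim c\,s^{-(1-p)/(2-p)}$ as $s\to 0^+$, and matching powers confirms $\nabla u = O(|x|^{-1-(N-\alpha)/(2-p)})$, which is bounded. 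I expect the main obstacle to be precisely this interplay near infinity together with the low-$\alpha$ iteration: one must be careful that each bootstrap step is valid on all of $\R^N$ (not just on compact sets), which is why having global $L^\infty$ bounds and the explicit decay rate from Corollary \ref{cor_12} in hand beforehand is essential.
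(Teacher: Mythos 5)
Your overall structure matches the paper's proof closely: establish $u^p\in L^1\cap L^\infty$, split on $\alpha$ with Lemma~\ref{lem621}(ii) doing the work for $\alpha>1$, a single pass through Lemma~\ref{lem34} for $\alpha\in(1/2,1]$, and a Schauder-type bootstrap on the Riesz potential for $\alpha\in(0,1/2]$ (the paper iterates \cite{Silvestre}*{Prop.~2.8} a pre-computed $n$ times; you cite \cite{Ros} and iterate until the exponent clears $1-\alpha$, which is the same idea). So there is no difference in strategy worth flagging.

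However, there are two genuine conceptual errors in the write-up, both concerning the map $f(t)=t^{2-p}+t^{q-p}$ for $\frac{N+\alpha}{N}<p<2$.

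First, the opening parenthetical claims $f$ is ``a bijection of $(0,\infty)$ onto $(f_{\min},\infty)$ only locally.'' For $p<2$ both exponents $2-p$ and $q-p$ are positive, so $f$ is a strictly increasing bijection of $(0,\infty)$ onto $(0,\infty)$ with $f(0^+)=0$; there is no $f_{\min}$ and no need to restrict to compact subintervals where $u$ is bounded away from $0$. You appear to have imported intuition from the $p>2$ case (Lemma~\ref{regu_p_big_12}), where $f$ really does have an interior minimum $\lambda_*$; that intuition does not apply here.

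Second, and more importantly, the ``subtlety at infinity'' is spurious. You assert that $(f^{-1})'$ blows up as $u\to 0$, but your own formula $(f^{-1})'(s)\sim c\,s^{-(1-p)/(2-p)}$ shows the opposite: since $p>1$ the exponent $-(1-p)/(2-p)=(p-1)/(2-p)$ is \emph{positive}, so $(f^{-1})'(s)\to 0$ as $s\to 0^+$. Equivalently, $f'(t)=(2-p)t^{1-p}+(q-p)t^{q-p-1}\to\infty$ as $t\to 0^+$ because $1-p<0$. Therefore $(f^{-1})'$ extends continuously to $[0,M]$ with value $0$ at the origin, so $f^{-1}$ is globally Lipschitz on the range of $I_\alpha*u^p$, and one \emph{can} naively multiply: $\nabla u=(f^{-1})'(I_\alpha*u^p)\,\nabla(I_\alpha*u^p)\in L^\infty$ with no tail matching required. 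Your compensating decay computation (exploiting $\nabla(I_\alpha*u^p)=O(|x|^{\alpha-N-1})$) is arithmetically correct and does recover the true decay rate of $\nabla u$, but it is a gratuitous detour for this lemma — that kind of pointwise gradient estimate is what the paper needs for Lemma~\ref{grad_fast} (to get $\nabla u\in L^1$), not for the $W^{1,\infty}$ conclusion here.
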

\begin{proof}
	By Lemma \ref{lem51} and Lemma \ref{lem34}, we know that \text{Supp}$(u)=\R^N$ and $u^p\in L^{\infty}(\bbrn)$.
	
	If $1<\alpha<N$ we can apply Lemma \ref{lem621} to conclude that $I_{\alpha}*u^p\in W^{1, \infty}(\bbrn)$.
	Next, since $\text{Supp}(u)=\R^N$, $u$ satisfies the equivalent governing equation
	\begin{equation}\label{eq_R}
		u^{2-p}+u^{q-p}=I_{\alpha}*u^p\quad \text{in}\ \R^N.
	\end{equation}
	Finally, as we have already noticed in the proof of Lemma \ref{lem34}, the function $f(t)=t^{2-p}+t^{q-p}$ has a differentiable inverse on $(0,+\infty)$ under our assumptions on $p$ and $q$. From  \eqref{eq_R} we conclude that $u\in W^{1,\infty}(\R^N)$.
	
	If $0<\alpha\leq 1$ then Lemma \ref{lem34} yields that $I_{\alpha}*u^p\in C^{0,\tau}(\R^N)$ for every $\tau\in (0,\alpha)$. Thus, again from \eqref{eq_R}, the
  differentiability of the inverse $f^{-1}$ on $(0,+\infty)$ and the boundedness of $u$, we infer  $u\in C^{0, \tau}(\bbrn)$ for every $\tau\in (0,\alpha)$. 
	In particular, if $1/2<\alpha\leq 1$ we can ensure that $\tau>1-\alpha$, and  hence $I_\alpha*u^p\in W^{1,\infty}(\R^N)$ by Lemma \ref{lem621}. Then, arguing as before, $u\in W^{1,\infty}(\R^N)$.
	For $0<\alpha< 1/2$, on the other hand, we need to use bootstrapping argument. Let us fix $n\in \bbn$, $n\geq 2$ such that $\frac{1}{n+1}<\alpha<\frac{1}{n}$
  and let us choose $\tau>0$ small enough such that $\tau+n \alpha<1$ (note that this is possible because of the definition of $n$). Then, we define
  $\tau_n:=\tau+(n-1)\alpha$.  Then, by Eq. \eqref{eq_R} together with the locally Lipschitz continuity of the inverse of $f$,  we can  apply
  \cite{Silvestre}*{Proposition $2.8$} $n$--times to conclude that $I_{\alpha}*u^p\in C^{0, \tau_{n+1}}(\bbrn)$. By our choice of $n$, we have the two sided
  inequality $1-\alpha<\tau_{n+1}<1$.  Hence, by Eq. \eqref{eq_R} again, we deduce that $u$ (and in particular $u^p$) belongs to $C^{0,\tau_{n+1}}(\R^N)$. To conclude, by Lemma \ref{lem34} we conclude that $I_{\alpha}*u^p\in W^{1,\infty}(\R^N)$, and that $u$ has the same regularity.
	Finally, if $\alpha=\frac{1}{2}$ it is sufficient to start the the above iterations with $\tau-\epsilon$, for some $\epsilon>0$ small enough such that $1-\alpha<\tau_{n+1}-\epsilon<1$.
\end{proof}

Next we show that positive solutions are actually arbitrarily smooth.

\begin{lemma}\label{full_support_smooth}
	Assume that	$\frac{N+\alpha}{N}<p< 2$ and $q>\frac{2Np}{N+\alpha}$. 
  Let $u\in L^2(\R^N)\cap L^q(\R^N)$ be a positive radial nonincreasing solution of \eqref{eqTF}. Then, $u\in C^{\infty}(\R^N)$.
\end{lemma}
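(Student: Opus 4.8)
The plan is to run a local bootstrap on the identity $u=f^{-1}(I_\alpha*u^p)$, exactly in the spirit of the proof of Lemma~\ref{smooth_support}, exploiting that now $f(t)=t^{2-p}+t^{q-p}$ is a $C^\infty$ strictly increasing bijection of $(0,+\infty)$ onto itself (recall $2-p\in(0,1)$ and $q>2>p$, so that $f'(t)=(2-p)t^{1-p}+(q-p)t^{q-p-1}>0$), hence $f^{-1}\in C^\infty((0,+\infty))$, and that $u$ is positive. Fix $x_0\in\R^N$ and a ball $B=B_{2\rho}(x_0)$. By Lemma~\ref{lem622} the solution $u$ is continuous and positive, so on the compact set $\overline B$ we have $u\ge c>0$; consequently $u(\overline B)$ and $(I_\alpha*u^p)(\overline B)=f(u)(\overline B)$ are compact subsets of $(0,+\infty)$, on neighbourhoods of which the maps $t\mapsto t^p$ and $f^{-1}$ are $C^\infty$. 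By the chain rule (Fa\`a di Bruno), composition with these maps preserves $C^{k,\gamma}$ regularity, so throughout the iteration $u$, $u^p$ and $I_\alpha*u^p$ all share the same local regularity near $x_0$.

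I start from $u\in W^{1,\infty}(\R^N)\subset C^{0,\tau_0}(\R^N)$ for all $\tau_0\in(0,1)$ (Lemma~\ref{lem622}); fix one $\tau_0\in(0,\min\{\alpha,1\})$ chosen generically so that $\tau_0+j\alpha\notin\N$ for every $j\in\N$. The inductive step is: if $u\in C^{k,\gamma}(\overline B_{2\rho}(x_0))$, then $u^p\in C^{k,\gamma}(\overline B_{2\rho}(x_0))$; writing $u^p=\zeta u^p+(1-\zeta)u^p$ with $\zeta\in C_c^\infty(B_{2\rho})$ and $\zeta\equiv1$ on $B_\rho$, the term $I_\alpha*((1-\zeta)u^p)$ is $C^\infty$ on $B_{\rho/2}$ (differentiation under the integral sign away from the singularity, the integral converging because $u^p\in L^1(\R^N)$ by Corollary~\ref{cor_12} and $u^p\in L^\infty(\R^N)$ by Lemma~\ref{lem34}), while $\zeta u^p\in C^{k,\gamma}_c(\R^N)$, so the interior Schauder-type estimate for the Riesz potential (equivalently, for $(-\Delta)^{\alpha/2}$), namely \cite{Ros}*{Theorem~1.1, Corollary~3.5} (resp. \cite{Silvestre}*{Proposition~2.8}), used just as in Lemma~\ref{smooth_support}, gives $I_\alpha*(\zeta u^p)\in C^{k,\gamma+\alpha}$ near $x_0$; here $C^{k,\gamma+\alpha}$ means $C^{k+m,\beta}$ with $m=\lfloor\gamma+\alpha\rfloor$ and $\beta=\gamma+\alpha-m\in(0,1)$, the non‑integrality of $\gamma+\alpha$ being guaranteed by the choice of $\tau_0$. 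Hence $I_\alpha*u^p\in C^{k,\gamma+\alpha}(B_{\rho/2}(x_0))$, and therefore $u=f^{-1}(I_\alpha*u^p)\in C^{k,\gamma+\alpha}(B_{\rho/2}(x_0))$.

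Iterating, shrinking the ball by a fixed factor at each stage, the local regularity of $u$ near $x_0$ increases by $\alpha$ at every step, so for any prescribed $k\in\N$ finitely many iterations produce $u\in C^k$ on a (small) ball around $x_0$. Since $x_0\in\R^N$ is arbitrary, $u\in C^k(\R^N)$ for every $k$, i.e.\ $u\in C^\infty(\R^N)$. The argument above is written for $0<\alpha<2$, where the fractional Schauder estimate applies directly; for $2\le\alpha<N$ one proceeds exactly as in Lemma~\ref{smooth_support} (that is, as in \cite{Carrillo-CalcVar}*{Theorem~10}), for instance replacing the gain‑$\alpha$ step by the identity $-\Delta(I_\alpha*u^p)=I_{\alpha-2}*u^p$ combined with classical Newtonian potential Schauder estimates, or by factoring $I_\alpha=I_\beta*I_{\alpha-\beta}$ with $\beta\in(0,1)$ through the semigroup property and iterating the gain‑$\beta$ estimate.

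The main difficulty is not conceptual but bookkeeping: handling Hölder exponents as they cross integer values (taken care of by the generic choice of $\tau_0$), setting up the cutoff localization so that the interior Riesz‑potential Schauder estimate applies even though the higher‑order regularity of $u^p$ is, at each stage, only local, and verifying that composition with the $C^\infty$ functions $t\mapsto t^p$ and $f^{-1}$ preserves $C^{k,\gamma}$ regularity — all of which is routine once $u$ is known to be bounded away from $0$ and from $\infty$ on the relevant compact sets.
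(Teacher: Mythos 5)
Your proof is correct and follows essentially the same route as the paper, which simply invokes Eq.\ \eqref{eq_R} together with the bootstrap in the proof of Lemma~\ref{smooth_support}: starting from the Hölder continuity of $I_\alpha*u^p$ given by Lemma~\ref{lem34}, use $u=f^{-1}(I_\alpha*u^p)$ with $f^{-1}$ smooth and bounded away from the degenerate points on compacts, transfer the regularity to $u^p$, apply the Ros--Oton--Serra interior Schauder estimate to gain $\alpha$ derivatives, and iterate on shrinking balls. The additions you make (explicitly checking that $f$ is a $C^\infty$ diffeomorphism of $(0,\infty)$ onto itself under $\tfrac{N+\alpha}{N}<p<2$, the cutoff decomposition $u^p=\zeta u^p+(1-\zeta)u^p$ so that the Schauder gain is applied to a globally $C^{k,\gamma}_c$ function while the far part is differentiated under the integral, and the generic choice of $\tau_0$ to dodge integer exponents) are details the paper leaves implicit by pointing to Lemma~\ref{smooth_support}, and they do not change the nature of the argument.
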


\begin{proof}
  This follows from Eq. \eqref{eq_R} as in the proof of Lemma \ref{smooth_support}.
\end{proof}

Next we establish a gradient estimate on the nonnegative solutions of \eqref{eqTF}.

\begin{lemma}\label{grad_fast}
	Assume that	$\frac{N+\alpha}{N}<p< 2$ and $q>\frac{2Np}{N+\alpha}$. 
  Let $u\in L^2(\R^N)\cap L^q(\R^N)$ be a nonnegative radial nonincreasing solution of \eqref{eqTF}.
	Then, $\nabla u\in L^{1}(\R^N)$. In particular, $u\in H^{1}(\R^N)$.
\end{lemma}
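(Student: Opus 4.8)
The plan is to show $\nabla u \in L^1(\R^N)$ by combining the algebraic decay rate of $u$ from Corollary~\ref{cor_12} with the Lipschitz bound from Lemma~\ref{lem622}, and then to upgrade this to $u \in H^1(\R^N)$ using the already-established facts $u \in L^2(\R^N)$ and $u \in L^\infty(\R^N)$. Since $u$ is radial nonincreasing, it suffices to estimate the one-dimensional derivative $|u'(r)|$ and integrate $|u'(r)| r^{N-1}$ over $(0,\infty)$. By Lemma~\ref{full_support_smooth} the solution is smooth (and positive), so $u'$ makes classical sense; the only issue is the decay of $u'$ as $r \to \infty$.

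First I would differentiate the equivalent governing equation \eqref{eq_R}, i.e. $f(u) = I_\alpha * u^p$ with $f(t) = t^{2-p} + t^{q-p}$, to get
\[
f'(u(r))\, u'(r) = \big(I_\alpha * u^p\big)'(r).
\]
From Lemma~\ref{lemm_11} we know $I_\alpha * u^p = A_\alpha\big(\int u^p\big)|x|^{\alpha - N}(1 + o(1))$, and I would need the companion estimate for its radial derivative, namely that $\big(I_\alpha * u^p\big)'(r)$ decays like $r^{\alpha - N - 1}$ as $r \to \infty$; this can be read off from the analogous far-field asymptotics (cf. \cite{TF}*{Lemma 6.1}) together with the fast decay of $u$, or obtained by differentiating under the integral sign using $u \in L^{p-\epsilon}(\R^N)$. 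On the other side, by Corollary~\ref{cor_12}, $u(r) \sim \big(A_\alpha\int u^p\big)^{1/(2-p)} r^{-(N-\alpha)/(2-p)} \to 0$, so $u(r)$ is small for large $r$ and hence $f'(u(r)) = (2-p)u(r)^{1-p} + (q-p)u(r)^{q-p-1} \sim (2-p)u(r)^{1-p} \sim c\, r^{(N-\alpha)(p-1)/(2-p)}$. Dividing, I obtain
\[
|u'(r)| \lesssim r^{\alpha - N - 1 - (N-\alpha)(p-1)/(2-p)} = r^{-\frac{N-\alpha}{2-p} - 1}\qquad (r \text{ large}).
\]
Therefore $|u'(r)|\,r^{N-1} \lesssim r^{N - 2 - \frac{N-\alpha}{2-p}}$, and since $p > \frac{N+\alpha}{N}$ forces $\frac{N-\alpha}{2-p} > N$, the exponent is $< -2 < -1$, so the tail integral converges. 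On the bounded region $[0,R]$ the integrand is controlled by the global Lipschitz bound of Lemma~\ref{lem622}. Hence $\nabla u \in L^1(\R^N)$.

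For the last claim $u \in H^1(\R^N)$: we already have $u \in L^2(\R^N)$ by hypothesis, so only $\nabla u \in L^2(\R^N)$ remains. I would interpolate pointwise: $|\nabla u(r)|^2 = |\nabla u(r)| \cdot |\nabla u(r)| \le \|\nabla u\|_\infty |\nabla u(r)|$ using the $W^{1,\infty}$ bound from Lemma~\ref{lem622}, whence
\[
\int_{\R^N} |\nabla u|^2\,dx \le \|\nabla u\|_{L^\infty} \int_{\R^N}|\nabla u|\,dx = \|\nabla u\|_{L^\infty}\,\|\nabla u\|_{L^1} < \infty.
\]
Thus $u \in H^1(\R^N)$. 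The main obstacle is the derivative far-field asymptotics for $I_\alpha * u^p$: one must justify differentiating the Riesz convolution and show the derivative inherits the sharp $r^{\alpha-N-1}$ rate, which is where the integrability $u \in L^{p-\epsilon}(\R^N)$ from Lemma~\ref{lemm_11} and the monotone decay of $u$ are essential; everything else is bookkeeping with the exponents, where the condition $p > (N+\alpha)/N$ is exactly what makes the tail exponent summable.
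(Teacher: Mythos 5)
Your argument is correct for $\alpha>1$, where it matches the paper's first case: differentiate $f(u)=I_\alpha*u^p$, use $(f^{-1})'(t)\sim t^{(p-1)/(2-p)}$ as $t\to 0^+$, and estimate $\nabla(I_\alpha*u^p)=(\nabla I_\alpha)*u^p$ by differentiating under the integral (legitimate since $|x|^{\alpha-N-1}$ is locally integrable for $\alpha>1$), yielding the decay $|u'(r)|\lesssim r^{-\frac{N-\alpha}{2-p}-1}$, which is $L^1$ because $\frac{N-\alpha}{2-p}>N$. The $H^1$ upgrade via $\|\nabla u\|_2^2\le\|\nabla u\|_\infty\|\nabla u\|_1$ is also fine.

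However, there is a genuine gap in the range $0<\alpha\le 1$, which you gloss over as ``one must justify differentiating the Riesz convolution.'' For $\alpha\le 1$ the kernel $\nabla I_\alpha(z)\sim|z|^{\alpha-N-1}$ is \emph{not} locally integrable, so the gradient of $I_\alpha*u^p$ is a singular integral that can only be defined in the principal-value form \eqref{grad_17}, namely $\int\nabla|x-y|^{\alpha-N}\big(u^p(y)-u^p(x)\big)\,dy$. When you split this into near-field ($|x-y|\le 1$) and far-field pieces, the far-field piece does decay like $r^{\alpha-N-1}$ up to an $\varepsilon$ loss, but the near-field piece is controlled by $\|\nabla u^p\|_{L^\infty(B_1(x))}\sim\|u^{p-1}\nabla u\|_{L^\infty(B_1(x))}$ --- i.e.\ it involves the very quantity $\nabla u$ you are trying to estimate. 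Your claimed sharp rate $r^{\alpha-N-1}$ for $(I_\alpha*u^p)'$ is thus not available a priori; the argument is circular as written. The paper breaks the circularity with a bootstrap: first get the crude bound $|\nabla u(x)|\lesssim|(f^{-1})'(I_\alpha*u^p)|\lesssim r^{-(N-\alpha)(p-1)/(2-p)}$ using only $\nabla(I_\alpha*u^p)\in L^\infty$ (Lemma \ref{lem622}), feed this back into the near-field estimate $I_1$, and iterate finitely many times (estimates \eqref{step_1}--\eqref{step_2} and beyond) until the accumulated decay exponent exceeds $N$. Without this iteration (or an equivalent device), the case $\alpha\le 1$ is not covered, and this is precisely the nontrivial part of the lemma.
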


\begin{proof}
  Assume first that $1<\alpha<N$. From the expression in Eq. \eqref{grad_17}, we deduce that 
	\begin{multline}\label{fst_17}
			|\nabla (I_{\alpha}*u^p)(x)|\le \int_{\R^N}\big|\nabla |x-y|^{\alpha-N}\big| u^p(y) dy \cr
      \le (N-\alpha)A_{\alpha}\int_{\R^N}\frac{u^p(y)}{|x-y|^{N-\alpha+1}}dy
			 =\frac{(N-\alpha)A_\alpha \|u\|^p_{p}}{|x|^{N-\alpha+1}}+o\left(\frac{1}{|x|^{N-\alpha+1}}\right),
	\end{multline}
	for $|x|$ sufficiently large. 
	Note also that the inverse $f^{-1}$ is differentiable on $(0,+\infty)$ and 
	\begin{equation}\label{asy_0}
		\big(f^{-1}\big)^\prime(t)=t^{\frac{p-1}{2-p}}+o(t^{\frac{p-1}{2-p}})\quad  \text{as}\ t\to 0^{+}.
	\end{equation}
	Hence, by using the chain rule in \eqref{eq_R}, Lemma \ref{lemm_11}, \eqref{fst_17} and \eqref{asy_0}, we infer 
	\begin{equation}\label{grad_18}
		|\nabla u (x)|=\big|\big(f^{-1}\big)^\prime\big((I_\alpha*u^p)(x)\big) \nabla (I_\alpha*u^p)(x)\big|\lesssim \frac{1}{|x|^{\frac{N-\alpha}{2-p}+1}}\quad \text{as}\ |x|\to +\infty.
	\end{equation}
	Combining Lemma \ref{lem622} with \eqref{grad_18} yields  $\nabla u\in L^1(\R^N)\cap L^{\infty}(\R^N)$ which concludes the proof.
	
	Assume now that $\alpha\in (0,1]$. From \eqref{grad_17}, arguing as in \cite{Carrillo-NA}*{Lemma $2.2$} we have 
	\begin{equation}
		\begin{split}
			\nabla (I_{\alpha}*u^p) & \le (N-\alpha)A_{\alpha}\left( \int_{|x-y|\le 1}\frac{|u^p(y)-u^p(x)|}{|x-y|^{N-\alpha+1}} +\int_{|x-y|>1}\frac{u^p(y)}{
				|x-y|^{N-\alpha+1}}\right)\\
			& =I_1+I_2.
		\end{split}
	\end{equation}
  First we note that, since $\alpha\in (0,1],$ for every $\ve\in (0,N) $, $I_2$ can be estimated as 
	\begin{multline}\label{estimate_0}
			\int_{|x-y|>1}\frac{u^p(y)}{
				|x-y|^{N-\alpha+1}}\le \int_{|x-y|>1}\frac{u^p(y)}{
				|x-y|^{N-\ve}} \cr \le \int_{\R^N}\frac{u^p(y)}{|x-y|^{N-\ve}}dy
			 = \frac{\|u\|^p_{p}}{|x|^{N-\ve}}+o\left(\frac{1}{|x|^{N-\ve}}\right),
	\end{multline}
	where for the last equality we used the decay estimate \eqref{l-potential-0} on $u$ established in Lemma \ref{lemm_11}.
	
	Let us fix $0<\bar{\ve}<\frac{(N-\alpha)(p-1)}{2-p}$.
  Since $\nabla (I_{\alpha}*u^p)\in L^{\infty}(\bbrn)$, by applying the gradient operator to both sides of Eq. \eqref{eq_R} we deduce that 
	\begin{equation}\label{step_1}
		|\nabla u (x)|\lesssim \big|\big(f^{-1}\big)^\prime((I_\alpha*u^p)(x))\big|\lesssim \frac{1}{|x|^{\frac{(N-\alpha)(p-1)}{2-p}}}\quad \text{as}\ |x|\to +\infty,
	\end{equation}
  where for the last inequality we used Corollary \ref{cor_12} and Eq. \eqref{asy_0}. 

  Next, we estimate $I_1$. By combining Eq. \eqref{step_1} with Lemma \ref{lemm_11} we conclude that 
	\begin{multline}\label{estimate_1}
			I_1\lesssim \|\nabla u^p\|_{L^{\infty}(\overline{B_1(x)})}\int_{|x-y|\le1}\frac{dy}{|x-y|^{N-\alpha}}
      \cr \lesssim \|u^{p-1} \nabla u\|_{L^{\infty}(\overline{B_1(x)})}
			\lesssim \frac{1}{|x|^{\frac{2(N-\alpha)(p-1)}{2-p}}}\quad \text{as}\ |x|\to +\infty.
	\end{multline}
  Then, if $\frac{2(N-\alpha)(p-1)}{2-p}\le N-\bar{\ve} $, combining together Eq. \eqref{estimate_0} with Eq. \eqref{estimate_1} yields 
	\begin{equation}\label{induction_1}
		\nabla (I_{\alpha}*u^p)\lesssim \frac{1}{|x|^{\frac{2(N-\alpha)(p-1)}{2-p}}}+\frac{1}{|x|^{N-\bar{\ve}}}\quad \text{as}\ |x|\to +\infty.
	\end{equation}
  On the other hand, if $\frac{2(N-\alpha)(p-1)}{2-p}> N-\bar{\ve} $, by the same argument it follows that 
	$$	\nabla (I_{\alpha}*u^p)\lesssim \frac{1}{|x|^{N-\ve}}\quad \text{as}\ |x|\to +\infty $$
	which in turn implies that
	$$|\nabla u(x)|\lesssim \frac{1}{|x|^{\frac{(N-\alpha)(p-1)}{2-p}+N-\bar{\ve}}}\quad \text{as}\ |x|\to +\infty .$$ Then, by the choice of $\bar{\ve}$ we
  conclude that $\nabla u\in  L^{1}(\R^N)$. However, if  Eq.~\eqref{induction_1} holds, we can improve the inequality \eqref{step_1} to 
	\begin{equation}\label{step_2}
		|\nabla u(x)|\lesssim \frac{1}{|x|^{\frac{3(N-\alpha)(p-1)}{2-p}}}\quad \text{as}\ |x|\to +\infty.
	\end{equation}
	Then, we can iterate this argument, until we find the first  positive integer $k$ such that 
	$$\frac{2k(N-\alpha)(p-1)}{2-p}> N-\bar{\ve} .$$
	In this way we obtain that 
	$$|\nabla u(x)|\lesssim \frac{1}{|x|^{\frac{(N-\alpha)(p-1)}{2-p}+N-\bar{\ve}}}\quad \text{as}\ |x|\to +\infty,$$
	which again implies $\nabla u\in L^{1}(\R^N)$ by the choice of $\bar{\ve}$.
\end{proof}


\section{Limit profiles for the Choquard equation}\label{sec-limit}

Throughout this section we assume that $\frac{N+\alpha}{N}<p<\frac{N+\alpha}{N-2}$ and $q>\frac{2Np}{N+\alpha}$.
As already highlighted in the Introduction, the rescaling
$	u(x)=\eps^{-\frac{1}{q-2}}w\big(\eps^{-\frac{2p-q}{\alpha(q-2)}}x\big)$
converts the Choquard problem \eqref{eqPeps} into the equation
\begin{equation}\label{eqPeps-TF+}
	-\eps^{\nu}\Delta u+u+|u|^{q-2}u=(I_{\alpha}*|u|^p)|u|^{p-2}u\quad\text{in $\R^N$},
\end{equation}
where we denoted $\nu:=\frac{2(2p+\alpha)-q(2+\alpha)}{\alpha(q-2)}.$ 
Notice that:
\begin{itemize}
	\item[$(i)$]$\nu> 0$ iff  $q< 2 \frac{2p+\alpha}{2+\alpha}$,
	\smallskip	
	\item[$(ii)$] $\nu<0$ iff $q> 2\frac{2p+\alpha}{2+\alpha}$.
\end{itemize}
The energy that corresponds to the rescaled equation \eqref{eqPeps-TF+} is given by
\begin{equation}\label{eq27+}
	\mathcal{J}_{\eps}(u)=\frac12 \eps^\nu\int_{\R^N}|\nabla u|^2dx+\frac12\int_{\R^N}|u|^2dx
  +\frac{1}{q}\int_{\R^N}|u|^qdx
  -\frac{1}{2p}\D_\alpha(|u|^p, |u|^p) ,
\end{equation}
and its Poho\v zaev functional is defined by 
\begin{multline}
	\mathcal{P}_\ve(u)=\frac{N-2}{2}\eps^\nu\int_{\R^N}|\nabla u|^2dx+\frac{N}{2}\int_{\R^N}|u|^2dx
  +\frac{N}{q}\int_{\R^N}|u|^qdx
  -\frac{N+\alpha}{2p}\D_\alpha(|u|^p, |u|^p).
\end{multline}
We note that
\begin{equation}\label{eq02+}
	\mathcal{J}_{\eps}(u)=\eps^{ \frac{q(N+\alpha)-2Np}{\alpha(q-2)}}\mathcal{I}_{\eps}(w),
\end{equation}
where $\mathcal{I}_{\eps}(w)$ is the Choquard energy defined in \eqref{eq02}.
Following \cite{ZLVMeps}, we consider the rescaled minimization problem
\begin{equation}\label{eq-sigma-eps}
	\sigma_\ve:=\inf_{u\in \mathscr{P}_\ve} \mathcal{J}_{\eps}(u),
\end{equation}
where Poho\v zaev manifold $\mathscr{P}_\ve$ is defined as 
$\mathscr{P}_\ve=\left\{u \in H^1(\R^N)\cap L^q(\R^N), u\neq 0: \mathcal{P}_\ve(u)=0 \right\}$.

Given $\eps>0$, let $w_\eps \in H^1(\R^N)\cap L^1(\R^N)\cap C^2(\R^N)$ be a positive, radial, monotonically decreasing ground state solution of
\eqref{eqPeps} (see Theorem \ref{thm01eps}). Define
\begin{equation}\label{TF-scale-eps}
	u_\eps(x):=\eps^{-\frac{1}{q-2}}w_\eps\big(\eps^{-\frac{2p-q}{\alpha(q-2)}}x\big).
\end{equation}
Then $u_\eps \in \mathscr{P}_\eps$ and	$\mathcal{J}_{\eps}(u_\eps)=\sigma_\eps$,
that is $u_\eps$ is the minimizer of \eqref{eq-sigma-eps} and a positive ground state solution of the rescaled equation \eqref{eqPeps-TF+}, see \cite{ZLVMeps}. 

In this section we shall prove Theorem \ref{t-TF-0}, which states that $u_\eps$ converges as $\eps\to +\infty$ and $\nu<0$ (respectively as $\eps\to 0$ and
$\nu>0$)  to a nonnegative radial nonincreasing ground state solution $u_*\in L^2(\R^N)\cap L^q(\R^N)$ of the Thomas--Fermi equation \eqref{eqTF}, constructed in Lemma \ref{rescaling_1} from a maximizer in Theorem \ref{t-GN}. Recall that $E(u_*)=\sigma_*$, where 
\begin{equation}\label{rel_7+}
	\sigma_*=\inf_{u\in \mathscr{P}}E(u)=\alpha (2Np)^{\frac{N}{\alpha}}\left(\frac{\theta_*}{N+\alpha}\right)^{\frac{N+\alpha}{\alpha}}\C^{-\frac{N}{\alpha}}_{N,\alpha,p,q},
\end{equation}
as described in \eqref{riesz_13bis}. The essential step in our proof of convergence is to show that $\sigma_{\ve}\to \sigma_*$.
\smallskip

In what follows we shall only consider the case $\eps\to +\infty$ and $\nu<0$, i.e., $q> 2\frac{2p+\alpha}{2+\alpha}$. The arguments in the case $\eps\to 0$ and $\nu>0$ are very similar.

\smallskip

First we study the easier case when the ground state solution $u_*\in D^{1}(\R^N)$, where for $N\ge 3$ we denote 
$D^{1}(\R^N)=\big\{u\in L^\frac{2N}{N-2}(\R^N)\,:\,\|\nabla u\|_2<\infty\big\}$.
In particular, $u_*\in D^{1}(\R^N)$ if $p<2$, as proved in Lemma \ref{grad_fast}. 

\begin{lemma}\label{lemma_6} 
	Assume $\frac{N+\alpha}{N}<p<\frac{N+\alpha}{N-2}$ and  $q> 2\frac{2p+\alpha}{2+\alpha}$.  
	If $u_*\in D^{1}(\R^N)$ then 
  (1) $\sigma_\varepsilon > \sigma_*$ and (2) $\sigma_\varepsilon\to\sigma_*$ as $\varepsilon\rightarrow +\infty$.
\end{lemma}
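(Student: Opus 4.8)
The plan is to exploit the behaviour of the energies along the dilations $u_\lambda(x):=u(x/\lambda)$, $\lambda>0$. For $u\in H^1(\R^N)\cap L^q(\R^N)$, $u\neq 0$, setting $a:=\tfrac12\|u\|_2^2+\tfrac1q\|u\|_q^q>0$ and $b:=\tfrac1{2p}\D_\alpha(|u|^p,|u|^p)>0$, one has
\[
\mathcal{J}_\eps(u_\lambda)=\tfrac12\eps^\nu\lambda^{N-2}\|\nabla u\|_2^2+a\lambda^N-b\lambda^{N+\alpha},\qquad E(u_\lambda)=a\lambda^N-b\lambda^{N+\alpha},
\]
together with $\mathcal{P}_\eps(u_\lambda)=\lambda\,\frac{d}{d\lambda}\mathcal{J}_\eps(u_\lambda)$ and $\mathcal{P}(u_\lambda)=\lambda\,\frac{d}{d\lambda}E(u_\lambda)$. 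Since $N\ge3$ and $\alpha\in(0,N)$, a direct inspection of $\frac{d}{d\lambda}\mathcal{J}_\eps(u_\lambda)$ shows that after dividing by $\lambda^{N-3}$ it is positive at $0$, increases and then decreases, and tends to $-\infty$; hence it changes sign exactly once on $(0,\infty)$, so $\lambda\mapsto\mathcal{J}_\eps(u_\lambda)$ has a strict global maximum at the unique $\lambda$ with $u_\lambda\in\mathscr{P}_\eps$, and likewise $\lambda\mapsto E(u_\lambda)$ has a strict global maximum at the unique $\lambda$ with $u_\lambda\in\mathscr{P}$. I would record these fibering facts first.

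\emph{Proof of (1).} By Theorem~\ref{thm01eps} and the rescaling \eqref{TF-scale-eps}, $\sigma_\eps$ is attained at $u_\eps\in\mathscr{P}_\eps$, so $\sigma_\eps=\mathcal{J}_\eps(u_\eps)=\max_{\lambda>0}\mathcal{J}_\eps\big((u_\eps)_\lambda\big)$. Choosing $\lambda_0>0$ with $(u_\eps)_{\lambda_0}\in\mathscr{P}$ (which exists since $a,b>0$ for $u_\eps$), we obtain
\[
\sigma_\eps\ \ge\ \mathcal{J}_\eps\big((u_\eps)_{\lambda_0}\big)\ =\ \tfrac12\eps^\nu\lambda_0^{N-2}\|\nabla u_\eps\|_2^2+E\big((u_\eps)_{\lambda_0}\big)\ \ge\ \tfrac12\eps^\nu\lambda_0^{N-2}\|\nabla u_\eps\|_2^2+\sigma_*\ >\ \sigma_*,
\]
where the last inequality is strict because $\eps^\nu>0$, $N\ge3$, and $\|\nabla u_\eps\|_2>0$ (as $u_\eps\in H^1(\R^N)\setminus\{0\}$).

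\emph{Proof of (2).} In view of (1) it suffices to show $\limsup_{\eps\to\infty}\sigma_\eps\le\sigma_*$. The hypothesis $u_*\in D^1(\R^N)$, together with $u_*\in L^2(\R^N)\cap L^q(\R^N)$, yields $u_*\in H^1(\R^N)\cap L^q(\R^N)$, so $u_*$ is an admissible competitor in $\mathscr{P}_\eps$. Let $\lambda_\eps>0$ be the unique value with $(u_*)_{\lambda_\eps}\in\mathscr{P}_\eps$. Since $\mathcal{P}(u_*)=0$ we get $\mathcal{P}_\eps\big((u_*)_1\big)=\tfrac{N-2}2\eps^\nu\|\nabla u_*\|_2^2>0$ while $\mathcal{P}_\eps\big((u_*)_\lambda\big)\to-\infty$ as $\lambda\to+\infty$, so $\lambda_\eps>1$; moreover, because $\nu<0$ the equation determining $\lambda_\eps$ degenerates, as $\eps^\nu\to0$, to $Na-(N+\alpha)b\lambda^\alpha=0$, whose unique positive root is $1$ precisely since $\mathcal{P}(u_*)=0$ forces $Na=(N+\alpha)b$; hence $\lambda_\eps\to1$ as $\eps\to\infty$. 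Consequently
\[
\sigma_\eps\ \le\ \mathcal{J}_\eps\big((u_*)_{\lambda_\eps}\big)\ =\ \tfrac12\eps^\nu\lambda_\eps^{N-2}\|\nabla u_*\|_2^2+E\big((u_*)_{\lambda_\eps}\big)\ \longrightarrow\ 0+E(u_*)\ =\ \sigma_*,
\]
so $\limsup_{\eps\to\infty}\sigma_\eps\le\sigma_*$, and with (1) this gives $\sigma_\eps\to\sigma_*$.

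\emph{Main obstacle.} Once the fibering identities are in place the argument is elementary one‑variable calculus; the only points needing care are the uniqueness of the maximizing dilation (monotonicity of $\frac{d}{d\lambda}\mathcal{J}_\eps(u_\lambda)$, where $N\ge3$ and $\alpha\in(0,N)$ enter) and the limit $\lambda_\eps\to1$. The substance of the lemma is just that $u_*\in D^1(\R^N)$ makes $u_*$ a legitimate competitor in $\mathscr{P}_\eps$ with finite Dirichlet energy and $\tfrac12\eps^\nu\|\nabla u_*\|_2^2\to0$. The genuinely harder case, when the limit ground state is not in $D^1(\R^N)$ (i.e.\ $p\ge2$), requires a different construction of competitors and is handled separately later in the section.
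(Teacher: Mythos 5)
Your proof is correct and takes essentially the same fibering/dilation route as the paper, comparing energies along $t\mapsto u(\cdot/t)$ and exploiting the unique Poho\v zaev point on each ray in both $\mathscr{P}$ and $\mathscr{P}_\varepsilon$; your packaging via the identity $\mathcal{J}_\varepsilon = \tfrac12\varepsilon^\nu\|\nabla\cdot\|_2^2+E$ and the fiber--max observation is a slight streamlining of the paper's explicit manipulations, while your (somewhat informal) ``the equation degenerates'' step for $\lambda_\varepsilon\to 1$ corresponds exactly to the paper's two-sided bound on $t_\varepsilon^\alpha$.
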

\begin{proof}
  Let $u_\varepsilon\in \mathscr{P}_{\varepsilon}$ be the minimizer of the problem $\displaystyle\inf_{u\in \mathscr{P}_\ve} \mathcal{J}_{\eps}(u)$  
  such that $\mathcal{J}(u_\ve)=\sigma_\ve$. Then, 
	\begin{equation*}
		\mathcal{P}(u_{\varepsilon})=\mathcal{P}_{\varepsilon}(u_{\varepsilon})-\left(\frac{N-2}{2}\right){\varepsilon}^{\nu}\|\nabla u_{\varepsilon}\|^{2}_2<0.
	\end{equation*}
	Let $w_{\varepsilon,t}(x):=u_{\varepsilon}\left(\frac{x}{t}\right)$, then we obtain
	\begin{equation}\label{P_27}
		\mathcal{P}(w_{\varepsilon,t})=\frac{N t^N}{2}\|u_{\varepsilon}\|^{2}_2+\frac{N
    t^N}{q}\|u_{\varepsilon}\|^q_{q}-\frac{(N+\alpha)t^{N+\alpha}}{2p}\D_\alpha(|u_\ve|^p, |u_\ve|^p),
	\end{equation}
  and $\mathcal{P}(w_{\varepsilon,1})=\mathcal{P}(u_{\varepsilon}) < 0$.
  On the other hand,  the dependence of $t$ of various terms in Eq. \eqref{P_27} implies 
  that $\mathcal{P}(w_{\varepsilon,t})$ is positive if $t>0$ is small.
	Therefore, by the continuity of $ t\mapsto  \mathcal{P}(w_{\varepsilon,t})$,  there exists $t_{\varepsilon}\in (0,1)$ such that 
$\mathcal{P}(w_{\varepsilon,t_\varepsilon})=0$ and hence 
  $w_{\varepsilon,t_{\varepsilon}}\in \mathscr{P}$. 
  Consequently, 
	\begin{equation}\label{first_ine_29}
		\begin{split}
			\sigma_* \le E(w_{\varepsilon,t_{\varepsilon}}) &  =\frac{\alpha t_{\varepsilon}^{N+\alpha}}{2Np} \D_\alpha(|u_\ve|^p, |u_\ve|^p)
			< \frac{\alpha}{2Np}\D_\alpha(|u_\ve|^p, |u_\ve|^p)+\frac{\varepsilon^\nu}{N}\|\nabla u_{\varepsilon}\|^2_2\\
			& = \mathcal{J}_{\varepsilon}(u_{\varepsilon})=\sigma_{\varepsilon},
		\end{split}
	\end{equation}
  which proves the first part of the statement. 
	
	Now, 
  let $u_*$ be the ground state solution for $(TF)$ obtained in Lemma \ref{rescaling_1}. Then, by the assumption $u_*\in D^{1}(\R^N)$,  
	\begin{equation}
		\mathcal{P}_{\varepsilon}(u_*)=\frac{(N-2){\varepsilon}^{\nu}}{2}\|\nabla u_{*} \|^{2}_2>0.
	\end{equation}
	Define the rescaled function $\omega_{t}(x):=u_*\left(\frac{x}{t}\right)$.
  Then  $ \mathcal{P}_{\varepsilon}(\omega_{t})$, expressed in term of $u_*$ as 
	\begin{equation*}
	\mathcal{P}_{\varepsilon}(\omega_{t})=\tfrac{N-2}{2}\varepsilon^{\nu}t^{N-2}\|\nabla u_{*}\|^{2}_2+ N
    t^{N}\left(\frac{\|u_*\|^{2}_2}{2}+\frac{\|u_*\|^{q}_{q}}{q}\right) -\tfrac{N+\alpha}{2p}t^{N+\alpha}\D_{\alpha}(|u_*|^p, |u_*|^p)
    \to-\infty,
	\end{equation*}
  as $t\to+\infty$.
	 This implies the existence of  $t_{\varepsilon}>1$ such that $\mathcal{P}_{\varepsilon}(\omega_{t_\ve})=0$. In particular, $t_\varepsilon \to 1$ because 
	\begin{equation*}
		1<(t_{\varepsilon})^\alpha\le \frac{N\left(\frac{\|u_*\|^{2}_2}{2}+\frac{\|u_*\|^{q}_{q}}{q}\right)+\frac{N-2}{2}\ve^{\nu}\|\nabla u_*\|^{2}_2}{\frac{N+\alpha}{2p}\D_{\alpha}(|u_*|^p, |u_*|^p)}\to 1\quad \text{as}\  \varepsilon\rightarrow +\infty.
	\end{equation*}
	Now, from  $\ve^{\nu}\to 0$ and $t_\ve\to 1$ as $\ve\to +\infty$, we conclude that
	\begin{equation}\label{eq_20_final}
		\begin{split}
			\sigma_{\varepsilon} & \le \mathcal{J}_{\varepsilon}\left(u_*\big(\tfrac{x}{t_{\varepsilon}}\big)\right)\\
			& =\frac{\varepsilon^{\nu}}{2}t_{\varepsilon}^{N-2}\|\nabla u_*\|^{2}_2
      +\frac{t^{N}_{\varepsilon}}{2}\Big(\|v_*\|^{2}_2+\|u_*\|^{q}_q
      \Big)-\frac{t^{N+\alpha}_{\varepsilon}}{2p}\D_{\alpha}(|u_*|^p, |u_*|^p)\to E(u_*)=\sigma_*.
		\end{split}
	\end{equation}
	The assertion about the convergence $\sigma_\varepsilon \to \sigma_*$ now follows by combining \eqref{first_ine_29} with \eqref{eq_20_final}.
\end{proof}

Next we show that $\sigma_{\ve}\to \sigma_*$ as $\ve\to +\infty$ without assuming that $u_*\in D^{1}(\R^N)$. 
In fact, in the case $p\ge 2$ it is expected that $u_*\not\in D^{1}(\R^N)$, as e.g. follows from H\"older estimates in Lemma \ref{regu_p_big_12}.

\begin{lemma}\label{lemma_6_1}
	Assume $\frac{N+\alpha}{N}<p<\frac{N+\alpha}{N-2}$ and  $q> 2\frac{2p+\alpha}{2+\alpha}$.  
	Then there exists a sequence $(\eps_k)_{k\in\N}$ such that $\eps_k\to\infty$ and  $0<\sigma_{\varepsilon_k}-\sigma_*\rightarrow 0$, as $k\to\infty$.
\end{lemma}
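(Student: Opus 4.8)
The plan is to split the two assertions contained in $0<\sigma_{\eps_k}-\sigma_*\to0$. The strict inequality $\sigma_*<\sigma_\eps$ holds for \emph{every} admissible $\eps>0$, and its proof is already contained in the first half of Lemma~\ref{lemma_6}: that argument uses only the minimizer $u_\eps\in\mathscr{P}_\eps$ (which exists by Theorem~\ref{thm01eps}) together with the rescaling $w_{\eps,t}(x)=u_\eps(x/t)$, and never invokes $u_*\in D^1(\R^N)$. Thus it remains to prove that $\limsup_{\eps\to+\infty}\sigma_\eps\le\sigma_*$; a sequence $\eps_k\to+\infty$ with $\sigma_{\eps_k}\to\sigma_*$ is then immediate.

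The obstruction is that, when $p\ge2$, the limit ground state $u_*$ furnished by Lemma~\ref{rescaling_1} need not belong to $D^1(\R^N)$ (cf.\ the H\"older estimates of Lemma~\ref{regu_p_big_12}), so it cannot be used directly as a competitor for $\mathcal{J}_\eps$. First I would regularize: since $u_*\in L^\infty(\R^N)$ by Lemma~\ref{lem34} and $u_*\in L^2(\R^N)\cap L^q(\R^N)$, the mollifications $u_n:=u_*\ast\rho_{1/n}$ satisfy $u_n\in D^1(\R^N)\cap L^2(\R^N)\cap L^q(\R^N)$ (Young's inequality bounds $\|\nabla u_n\|_2$), and $u_n\to u_*$ in $L^s(\R^N)$ for every $s\in[2,\infty)$. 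Because $\frac{2Np}{N+\alpha}\in(2,q)$ by \eqref{A}, interpolation together with the HLS inequality then gives
\begin{equation*}
\|u_n\|_2\to\|u_*\|_2,\qquad \|u_n\|_q\to\|u_*\|_q,\qquad \D_\alpha(|u_n|^p,|u_n|^p)\to\D_\alpha(|u_*|^p,|u_*|^p).
\end{equation*}

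For each fixed $n$ I would mimic the upper-bound half of the proof of Lemma~\ref{lemma_6} with $u_n$ replacing $u_*$. For $\omega_{n,t}(x):=u_n(x/t)$ one has, since $\|\nabla u_n\|_2<\infty$ and $N\ge3$, that $\mathcal{P}_\eps(\omega_{n,t})>0$ for $t$ small (the $t^{N-2}$ term is dominant) and $\mathcal{P}_\eps(\omega_{n,t})\to-\infty$ as $t\to+\infty$ (the $t^{N+\alpha}$ term is dominant), hence there is $t_{\eps,n}>0$ with $\omega_{n,t_{\eps,n}}\in\mathscr{P}_\eps$. Since $\nu<0$ in the regime under consideration, $\eps^\nu\to0$ as $\eps\to+\infty$, and a continuity-of-roots argument forces $t_{\eps,n}\to t_n$, where $t_n>0$ is the unique dilation with $u_n(\cdot/t_n)\in\mathscr{P}$; passing to the limit in $\mathcal{J}_\eps(\omega_{n,t_{\eps,n}})$ exactly as in \eqref{eq_20_final} yields
\begin{equation*}
\limsup_{\eps\to+\infty}\sigma_\eps\ \le\ \limsup_{\eps\to+\infty}\mathcal{J}_\eps\big(\omega_{n,t_{\eps,n}}\big)\ =\ E\big(u_n(\cdot/t_n)\big).
\end{equation*}
Letting $n\to+\infty$, the convergences of the norms and of the Coulomb term force $t_n\to1$ (since $u_*$ already lies on $\mathscr{P}$), whence $E(u_n(\cdot/t_n))\to E(u_*)=\sigma_*$. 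Therefore $\limsup_{\eps\to+\infty}\sigma_\eps\le\sigma_*$, and combined with $\sigma_*<\sigma_\eps$ we conclude $\sigma_\eps\to\sigma_*$ as $\eps\to+\infty$; in particular any sequence $\eps_k\to+\infty$ satisfies $0<\sigma_{\eps_k}-\sigma_*\to0$.

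The main point requiring care is the order of the two limits: one must first send $\eps\to+\infty$ with the regularization level $n$ frozen, so that the uncontrolled quantity $\eps^\nu\|\nabla u_n\|_2^2$ vanishes, and only afterwards let $n\to+\infty$. Equivalently, one may produce the sequence in the statement directly by a diagonal choice, picking $\eps_k\gg k$ so large that $|\mathcal{J}_{\eps_k}(\omega_{k,t_{\eps_k,k}})-E(u_k(\cdot/t_k))|<1/k$; this matches the ``there exists a sequence'' formulation. The remaining steps — continuity of $\D_\alpha$ along $u_n\to u_*$ via HLS and the interpolation $\tfrac{2Np}{N+\alpha}\in(2,q)$, and continuous dependence of the Poho\v zaev projection parameters $t_{\eps,n}$ and $t_n$ on the data — are routine.
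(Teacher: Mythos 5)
Your argument is correct and takes a genuinely different route from the paper's. The paper approximates $u_*$ by the compactly supported truncations $\psi_s=\eta_s u_*$ and then couples the truncation scale $s$ to $\eps$: it introduces a monotone function $f$ with $f(1/s_k)=\|\nabla\psi_{s_k}\|_2^2$ and invokes the calculus Lemma~\ref{tec_lemm} to manufacture a link $s_k=1/g(\eps_k)$ satisfying simultaneously $\eps^\nu f(g(\eps))\to0$ and $g(\eps)\eps^\nu f(g(\eps))\to\infty$, which is precisely what is needed to keep $\mathcal{P}_{\eps_k}(\psi_{\eps_k^{-1}})>0$ while letting the gradient contribution to $\mathcal J_{\eps_k}$ vanish. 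You instead freeze an $H^1$-approximant $u_n$ (mollification works since $u_*\in L^\infty$ is compactly supported for $p\ge2$, so $\|\nabla u_n\|_2\le\|u_*\|_2\|\nabla\rho_{1/n}\|_1<\infty$ and $u_n\to u_*$ in every $L^s$, $s\in[1,\infty)$), send $\eps\to\infty$ \emph{first} so that $\eps^\nu\|\nabla u_n\|_2^2\to0$ trivially and $t_{\eps,n}\to t_n$ by continuity of the unique Poho\v zaev root, obtain $\limsup_\eps\sigma_\eps\le E(u_n(\cdot/t_n))$, and only then let $n\to\infty$, using $t_n\to1$ (since $u_*\in\mathscr P$ and the norms and the Coulomb term converge) to conclude $\limsup_\eps\sigma_\eps\le\sigma_*$. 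This decoupling of the two limits eliminates the need for the technical Appendix Lemma~\ref{tec_lemm} and for the piecewise-linear interpolant $f$, and in fact yields the full limit $\sigma_\eps\to\sigma_*$ rather than merely a subsequence; the paper's more intricate coupling buys essentially nothing beyond an explicit choice of $s_k$ in terms of $\eps_k$. Your identification that the lower bound $\sigma_*<\sigma_\eps$ in the first half of Lemma~\ref{lemma_6} never uses $u_*\in D^1(\R^N)$ is also correct, and matches what the paper implicitly relies on. The only places where you are slightly terse — the convergence $\D_\alpha(|u_n|^p,|u_n|^p)\to\D_\alpha(|u_*|^p,|u_*|^p)$ (which for $p\ge2$ follows directly from the uniform $L^\infty$ bound $\|u_n\|_\infty\le\|u_*\|_\infty$, the pointwise estimate $|u_n^p-u_*^p|\lesssim|u_n-u_*|$, convergence in $L^{2N/(N+\alpha)}$, and HLS) and the continuity of $t_{\eps,n}$ in $\eps$ (clear from the implicit function theorem since $\partial_t\mathcal P_\eps(\omega_{n,t})\ne0$ at the root) — are routine, as you say.
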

\begin{proof}
	If $p<2$ then the assertion follows by combining Lemma \ref{grad_fast} with Lemma \ref{lemma_6}.
	
	Assume that $p\ge 2$. Again, if the ground state solution $u_*$ of \eqref{eqTF} constructed in Lemma \ref{rescaling_1} belongs to  $D^{1}(\R^N)$ we conclude by Lemma \ref{lemma_6}. If not, (for example for $p>2$), we argue as follows.
	
	First note that arguing as in \eqref{first_ine_29} in the first part of the proof of Lemma \ref{lemma_6}, we conclude that $\sigma_\ve\ge \sigma_*$.
  It remains then to prove that $\sigma_\ve \to \sigma_*$ by constructing an sequence of approximate minimizers of $\mathcal J_{\varepsilon}$ from $u_*$, which is 
  achieved by truncating $u_*$ (to avoid the singularity near the boundary) on a length scale $s$ depending on $\varepsilon$.
	
	Given $s\ge 0$ small, we introduce the cut-off function $\eta_{s}\in C_c^{\infty}(\R^N)$ such that
	$\eta_s(x)=1$ for $|x|\leq R_*-s$, $0<\eta_s(x)<1$ for $R_*-s<|x|\leq R_*-\frac{s}{2}$, $\eta_s(x)=0$ for $|x|\geq R_*-\frac{s}{2}$. Furthermore, $|\eta_s'(x)|\leq \frac{4}{s}$ and $|\eta_s'(x)|\geq \frac{1}{2s}$ for $R_*-\frac{4s}{5}<|x|<R_*-\frac{3s}{5}$.  Set
	$$\psi_s(x):=\eta_s(x)u_*(x).$$
	By the definition of $\eta_s$, since $u_*\in L^{\infty}(\R^N)$ and it is supported in $B_{R_*}$,  for every $1\le r<\infty$ we have 
	\begin{multline}\label{estimate_28}
	\int_{\mathbb{R}^N}|\psi^p_s(x)-u_*^p(x)|^r  =\int_{R_*-s\le |x|\le R_*}|u_*(x)|^{pr}(1-\eta^p_s(x))^{r}\cr
      \le \|u_*\|^{p r}_{L^{\infty}(\R^N)} |A_{R_*-s,R_*}| =\O\left(s\right),
	\end{multline}
	where $|A_{R_*-s,R_*}|$ is the volume of $B_{R_*}\setminus \overline{B}_{R_*-s}$. Further, by combining the Hardy--Littlewood--Sobolev inequality  with \eqref{estimate_28}, we obtain
	\begin{multline*}
	0\le \D_\alpha(|u_*|^p, |u_*|^p) -\D_\alpha(|\psi_s|^p, |\psi_s|^p) 
      =\mathcal{D}_\alpha(|u_*|^p + |\psi_s|^p,|u_*|^p - |\psi_s|^p)
      \cr\le C\|u^p_*+\psi_s^p \|_{\frac{2N}{N+\alpha}} \|u^p_*-\psi^p_s \|_{\frac{2N}{N+\alpha}}=\O\left(s^{\frac{N+\alpha}{2N}}\right).
	\end{multline*}
	To summarise,  the following holds:
	\begin{equation}\label{eq1204}
		\D_\alpha(|\psi_s|^p, |\psi_s|^p)=
		\D_\alpha(|u_*|^p, |u_*|^p)-\O(s^{\frac{N+\alpha}{2N}}),
	\end{equation}
	\begin{equation}\label{eq1205}
		\|\psi_s\|_q^q=\|u_*\|_q^q-\O(s),
	\end{equation}
	\begin{equation}\label{eq1206}
		\|\psi_s\|_2^2=\|u_*\|_2^2-\O(s),
	\end{equation}
	and we recall (see Section \ref{s-notation}) that here $\O(s)$ denotes a {\em nonnegative} function such that $\O(s)\le Cs$  for every $s>0$ small enough and
  for a constant $C>0$ independent of $s$.
	
  Note that by Lemma \ref{smooth_support}, the function $\psi_s$ is smooth and, since $u_*\not\in D^1(\R^N)$, the quantity $\|\nabla \psi_s\|^{2}_2$ blow up as $s\to 0^{+}$. In particular, there exists a decreasing sequence $\left(s_k\right)_{k}$ converging to zero such that  $\|\nabla \psi_{s_k}\|^{2}_2$ diverges monotonically to infinity. Hence,  
  we can define a piecewise linear, monotonically increasing,  continuous function $f:\R_+\to\R_+$ such that $f(0)=0$, $\lim_{s\to 0^{+}}f\left(\frac{1}{s}\right)=+\infty$ and 
		$$f\left(\tfrac{1}{s_k}\right)=\|\nabla \psi_{s_k}\|^{2}_2.$$
		We are going to describe a way we can control the rate of blow up of $f\big(\frac1{s_k}\big)$ in terms of the quantities in \eqref{eq1204}--\eqref{eq1206}.

    In what follows the parameter $s_k$ will be defined as a function of $\ve_k$, so that $f(1/s_k)=\|\nabla\psi_{s_k}\|_2^2$ blows up at a slower rate than $\varepsilon_k^{-\nu}$, 
    to ensure the convergence $\sigma_{\varepsilon_k}\to \sigma_*$.
	To do this, we set 
	\begin{equation}\label{s_epsilon}
		s_k:=\frac{1}{g(\varepsilon_k)},
	\end{equation}
	where $g:\R_+\to\R_+$  is a suitable function such that $\lim_{\varepsilon\to +\infty}g(\varepsilon)=+\infty$, to be chosen later. Then for all sufficiently large $k$ we have
$$\|\nabla \psi_{\frac{1}{g(\varepsilon_k)}}\|_2^2=f(g(\ve_k))\nearrow +\infty\quad \text{as}\ k\to +\infty.$$

   For the sake of notation simplicity we further denote 
    $$\psi_{\varepsilon_k^{-1}}:=\psi_{\frac{1}{g(\varepsilon_k)}}.$$
	Combining together \eqref{eq1204}, \eqref{eq1205} with \eqref{eq1206}, we have that
	\begin{equation}\label{eq_28}
		\begin{split}
			\mathcal{P}_{\varepsilon_k}(\psi_{\eps_k^{-1}})=\left(\frac{N-2}{2}\right)\varepsilon_k^{\nu}\|\nabla \psi_{\varepsilon_k^{-1}}\|^{2}_2+\underbrace{\mathcal{P}(u_*)}_{=0}-\O\left(\tfrac{1}{g(\varepsilon_k)}\right)+\O\left(\left(\tfrac{1}{g(\varepsilon_k)}\right)^{\frac{N+\alpha}{2N}}\right),
		\end{split}
	\end{equation}

	We claim that $\mathcal{P}_{\varepsilon_k}(\psi_{{\varepsilon_k}^{-1}})>0$ for a suitable choice of the function $g$  when $k$ is sufficiently large.  Indeed,
  if $g$ satisfies the condition
	\begin{equation}
		\lim_{\varepsilon\to +\infty}g(\varepsilon) \varepsilon^{\nu}f(g(\varepsilon))=+\infty, 
	\end{equation}
	from \eqref{eq_28} we obtain that
	\begin{equation}\label{positive_29}
		\begin{split}
			\mathcal{P}_{\varepsilon_k}(\psi_{\eps_k^{-1}})& \ge\left(\frac{N-2}{2}\right)\varepsilon_k^{\nu}\|\nabla \psi_{\varepsilon_k^{-1}}\|^{2}_2- \O\left(\tfrac{1}{g(\varepsilon_k)}\right)\\
      & =\left(\frac{N-2}{2}\right)\varepsilon_k^{\nu}f(g(\varepsilon_k))- \O\left(\tfrac{1}{g(\varepsilon_k)}\right)>0,
		\end{split}
	\end{equation}
	provided that $k$ is large enough. Next,  the equality
	\begin{equation*}
		\begin{split}
			& \mathcal{P}_{\varepsilon_k}\left(\psi_{\varepsilon_k^{-1}}\big(\tfrac{x}{t}\big)\right)  =\\
			& =\tfrac{N-2}{2}t^{N-2}\varepsilon_k^{\nu}\|\nabla \psi_{\varepsilon_k^{-1}}\|^{2}_2+N t^{N}\left(\frac{\|\psi_{\varepsilon_k^{-1}}\|^{2}_2}{2}+\frac{\|\psi_{\varepsilon_k^{-1}}\|^{q}_{q}}{q}\right) -\tfrac{N+\alpha}{2p}t^{N+\alpha}\D_\alpha(|\psi_{\ve_k^{-1}}|^p, |\psi_{\ve_k^{-1}}|^p)
		\end{split}
	\end{equation*}
	implies that
	\begin{equation}\label{infinity_29}
		\lim_{t\to +\infty}\mathcal{P}_{\varepsilon_k}\left(\psi_{\varepsilon_k^{-1}}\big(\tfrac{x}{t}\big)\right)=-\infty.
	\end{equation}
	Thus, by combining \eqref{positive_29} with \eqref{infinity_29}, for every $k$  sufficiently large there exists $t_{\varepsilon_k}>1$ such that  
	\begin{equation}\label{eq-t-k}
	\mathcal{P}_{\varepsilon_k}\left(\psi_{\varepsilon_k^{-1}}\big(\tfrac{x}{t_{\varepsilon_k}}\big)\right)=0.
    \end{equation} 
	In particular, by using \eqref{eq1204}, \eqref{eq1205} and \eqref{eq1206}, if $g$ satisfies the second condition
	\begin{equation}\label{cond_29}
		\lim_{\varepsilon\to +\infty}\varepsilon^{\nu}f(g(\varepsilon))=0,
	\end{equation}
  we  then can conclude that $t_{\varepsilon_k}\to 1$, since
	\begin{equation}\label{t_zero}
		\begin{split}
			1<(t_{\varepsilon_k})^{\alpha}\le \frac{N\left(\frac12\|\psi_{\varepsilon_k^{-1}}\|^{2}_2+\frac1{q}\|\psi_{\varepsilon_k^{-1}}\|^{q}_{q}\right)+\frac{N-2}{2}\ve_k^{\nu}\|\nabla \psi_{\varepsilon_k^{-1}}\|^{2}_2}{\frac{N+\alpha}{2p}\D_{\alpha}(|\psi_{\varepsilon_k^{-1}}|^p, |\psi_{\varepsilon_k^{-1}}|^p)}\to 1\quad \text{as}\  k\rightarrow +\infty.
		\end{split}
	\end{equation}
	To summarise, to deduce \eqref{positive_29} and \eqref{t_zero} we need to construct a function $g$ that satisfies: 
	\begin{itemize}
		\item[$(i)$] $\lim_{\varepsilon\to +\infty}\varepsilon^{\nu}f(g(\varepsilon))=0;$\smallskip
		\item[$(ii)$] $\lim_{\varepsilon \to +\infty}g(\varepsilon)\varepsilon^{\nu}f(g(\varepsilon))=+\infty.$
	\end{itemize}
	The existence of such function $g$ is guaranteed by Lemma \ref{tec_lemm} in the Appendix.
	Moreover,  
	\begin{equation}\label{last_ineq_29}
		\begin{split}
			\sigma_{\varepsilon_k} & \le \mathcal{J}_{\varepsilon_k}\left(\psi_{\varepsilon_k^{-1}}\big(\tfrac{x}{t_{\varepsilon_k}}\big)\right)\\
			& =\frac{t_{\varepsilon_k}^{N-2}}{2}\ve_k^{\nu}\| \nabla \psi_{\varepsilon_k^{-1}}\|^{2}_2+t^{N}_{\ve_k}\left(\frac12\|\psi_{\varepsilon_k^{-1}}\|^{2}_2+\frac1{q}\|\psi_{\varepsilon_k^{-1}}\|^{q}_{q}\right)-\frac{t^{N+\alpha}_{\ve_k}}{2p}\D_{\alpha}(|\psi_{\varepsilon_k^{-1}}|^p, |\psi_{\varepsilon_k^{-1}}|^p).
		\end{split}
	\end{equation}
  Finally, in view of \eqref{eq1204}, \eqref{eq1205}, \eqref{eq1206}, \eqref{cond_29} and since $t_{\varepsilon_k}\to 1$, the right hand side of \eqref{last_ineq_29} converges to $\sigma_*$ as $k\to\infty$. 
	This implies that $\sigma_{\varepsilon_k}\rightarrow  \sigma_*$ as $k \rightarrow +\infty.$
\end{proof}

Once the convergence of $\sigma_\varepsilon$ towards $\sigma_*$ is proved, we can show that the term $\varepsilon^\nu \|\nabla u_\varepsilon\|^2_2$
also vanishes in the same limit.

\begin{corollary}\label{gradient_0}
	Assume that $\frac{N+\alpha}{N}<p<\frac{N+\alpha}{N-2}$ and $q>2\frac{2p+\alpha}{2+\alpha}$. 
	Then there exists a sequence $(\eps_k)_{k\in\N}$ and a sequence of ground states $(u_{\varepsilon_k})$ of $(P_{\varepsilon_k})$ such that $\eps_k\to\infty$ and
	$$\varepsilon_k^{\nu} \|\nabla u_{\varepsilon_k}\|^{2}_2\rightarrow 0\quad\text{as $k\to +\infty$}.$$  
\end{corollary}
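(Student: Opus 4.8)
The plan is to extract the estimate directly from the Poho\v zaev constraint satisfied by the rescaled ground states $u_\eps$, together with the convergence $\sigma_{\eps_k}\to\sigma_*$ already established in Lemma \ref{lemma_6_1}. First I would recall that $u_\eps\in\mathscr{P}_\eps$, i.e. $\mathcal{P}_\eps(u_\eps)=0$, and form the linear combination $\mathcal{J}_\eps(u_\eps)-\frac1N\mathcal{P}_\eps(u_\eps)$, in which the $\|u_\eps\|_2^2$ and $\|u_\eps\|_q^q$ terms cancel exactly, leaving
\[
\sigma_\eps=\mathcal{J}_\eps(u_\eps)=\frac{\eps^\nu}{N}\|\nabla u_\eps\|_2^2+\frac{\alpha}{2Np}\,\D_\alpha(|u_\eps|^p,|u_\eps|^p).
\]
Hence it suffices to prove that $\frac{\alpha}{2Np}\D_\alpha(|u_{\eps_k}|^p,|u_{\eps_k}|^p)\to\sigma_*$ along the sequence $(\eps_k)$, since then $\frac{\eps_k^\nu}{N}\|\nabla u_{\eps_k}\|_2^2=\sigma_{\eps_k}-\frac{\alpha}{2Np}\D_\alpha(|u_{\eps_k}|^p,|u_{\eps_k}|^p)\to\sigma_*-\sigma_*=0$.

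Next I would invoke the chain of inequalities already contained in the first part of the proof of Lemma \ref{lemma_6} (see \eqref{first_ine_29}): since $N\ge 3$, one has $\mathcal{P}(u_\eps)=-\frac{N-2}{2}\eps^\nu\|\nabla u_\eps\|_2^2\le 0$, while $\mathcal{P}(w_{\eps,t})>0$ for $w_{\eps,t}(x):=u_\eps(x/t)$ with $t$ small, so by continuity there is $t_\eps\in(0,1]$ with $w_{\eps,t_\eps}\in\mathscr{P}$; because $E(v)=\frac{\alpha}{2Np}\D_\alpha(|v|^p,|v|^p)$ for $v\in\mathscr{P}$ and $t_\eps\le 1$,
\[
\sigma_*\le E(w_{\eps,t_\eps})=\frac{\alpha\,t_\eps^{N+\alpha}}{2Np}\,\D_\alpha(|u_\eps|^p,|u_\eps|^p)\le\frac{\alpha}{2Np}\,\D_\alpha(|u_\eps|^p,|u_\eps|^p)\le\sigma_\eps,
\]
the last inequality being the displayed identity of the previous paragraph. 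Picking the sequence $(\eps_k)_{k\in\N}$ from Lemma \ref{lemma_6_1} we have $\eps_k\to\infty$ and $\sigma_{\eps_k}\to\sigma_*$, so both ends of this chain converge to $\sigma_*$; the squeeze theorem yields $\frac{\alpha}{2Np}\D_\alpha(|u_{\eps_k}|^p,|u_{\eps_k}|^p)\to\sigma_*$, and therefore $\eps_k^\nu\|\nabla u_{\eps_k}\|_2^2\to 0$.

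I do not expect a genuine obstacle here: all the analytic content — the existence of a sequence $\eps_k\to\infty$ with $\sigma_{\eps_k}\to\sigma_*$, which required the delicate truncation construction when $u_*\notin D^1(\R^N)$ — is already available in Lemma \ref{lemma_6_1}, and what remains is the bookkeeping above. The points that merit care are that $N\ge 3$ is used to guarantee $\frac{N-2}{2}>0$, hence $\mathcal{P}(u_\eps)\le 0$ and $t_\eps\in(0,1]$, and that the coefficient arithmetic in $\mathcal{J}_\eps-\frac1N\mathcal{P}_\eps$ (and in $E-\frac1N\mathcal{P}$) is carried out correctly. The complementary regime $\eps\to 0$, $\nu>0$ is handled identically, since the same Poho\v zaev identity and the same sandwich hold verbatim.
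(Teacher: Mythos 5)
Your proof is correct and is in fact a streamlined version of the paper's argument. Both proofs start from the same place — using the first part of the proof of Lemma \ref{lemma_6} to produce $t_\eps\in(0,1)$ with $u_\eps(\cdot/t_\eps)\in\mathscr{P}$, and identifying $\sigma_\eps=\mathcal{J}_\eps(u_\eps)-\tfrac1N\mathcal{P}_\eps(u_\eps)=\tfrac{\eps^\nu}{N}\|\nabla u_\eps\|_2^2+\tfrac{\alpha}{2Np}\D_\alpha(|u_\eps|^p,|u_\eps|^p)$ — but they diverge afterwards. The paper first establishes $t_{\eps_k}\to 1$ by a separate contradiction argument, records boundedness of $\|\nabla u_{\eps_k}\|_2^2\,\eps_k^\nu$, $\D_\alpha$, $\|u_{\eps_k}\|_2^2$, $\|u_{\eps_k}\|_q^q$, and then subtracts the two Poho\v zaev identities $\mathcal{P}(u_{\eps_k}(\cdot/t_{\eps_k}))=0$ and $\mathcal{P}_{\eps_k}(u_{\eps_k})=0$ to isolate $\eps_k^\nu\|\nabla u_{\eps_k}\|_2^2$ up to terms of order $t_{\eps_k}^N-1$ and $t_{\eps_k}^{N+\alpha}-1$. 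You instead observe that the chain $\sigma_*\le\frac{\alpha t_\eps^{N+\alpha}}{2Np}\D_\alpha\le\frac{\alpha}{2Np}\D_\alpha\le\sigma_\eps$ already squeezes $\frac{\alpha}{2Np}\D_\alpha(|u_{\eps_k}|^p,|u_{\eps_k}|^p)\to\sigma_*$ once Lemma \ref{lemma_6_1} gives $\sigma_{\eps_k}\to\sigma_*$, and then the displayed decomposition of $\sigma_{\eps_k}$ hands over $\tfrac{\eps_k^\nu}{N}\|\nabla u_{\eps_k}\|_2^2\to 0$ directly. This bypasses the intermediate step $t_{\eps_k}\to 1$ and the boundedness bookkeeping entirely; what the paper's longer route buys (and what you would lose) is the explicit fact $t_{\eps_k}\to 1$, which it then reuses in the proof of Theorem \ref{t-TF-0} to pass from $u_{\eps_k}(\cdot/t_{\eps_k})\to\bar v$ to $u_{\eps_k}\to\bar v$. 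As a self-contained proof of Corollary \ref{gradient_0}, however, your argument is valid and shorter. (Incidentally, you implicitly correct a coefficient typo in the paper's proof, which writes $\tfrac{\eps_k^\nu}{2}$ where $\tfrac{\eps_k^\nu}{N}$ is meant.)
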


\begin{proof}
	Arguing as in the first part of Lemma \ref{lemma_6}, there exists $t_\varepsilon\in (0,1)$ such that $u_{\varepsilon}\big(\frac{x}{t_\varepsilon}\big)\in \mathscr{P}$.
	Now, let's consider the sequence $(t_{\ve_k})_k$, corresponding to the sequence $(\ve_k)_k$ in Lemma \ref{lemma_6_1}. We first prove that, up to a subsequence,  $t_{\varepsilon_k}\rightarrow 1$ as $k\to +\infty$. Since $(t_{\varepsilon_k})_k$ is bounded, up to a subsequence $t_{\varepsilon_k}\rightarrow t_0\in [0,1]$. Assume by contradiction that $t_0<1$. Then
	\begin{equation}\label{ineq_24}
		\begin{split}
			\sigma_* &\le E\left(u_{\varepsilon_k}\big(\tfrac{x}{t_{\varepsilon_k}}\big)\right)=\frac{\alpha t_{\varepsilon_k}^{N+\alpha}}{2Np}\D_{\alpha}(|u_{\ve_k}|^p, |u_{\ve_k}|^p)\\
			& \le t^{N+\alpha}_{\varepsilon_k} \mathcal{J}_{\varepsilon_k}(u_{\varepsilon_k})=t^{N+\alpha}_{\ve_k}\sigma_{\ve_k}\rightarrow t^{N+\alpha}_0 \sigma_* <\sigma_*,
		\end{split}
	\end{equation}
	a contradiction.
  Therefore, we have proved that $t_{\ve_k} \to 1$ and furthermore,
	$$\mathcal{J}_{\varepsilon_k}(u_{\varepsilon_k})=\frac{\varepsilon_k^{\nu}}{2}\|\nabla u_{\varepsilon_k}\|^{2}_2+\frac{\alpha}{2Np}\D_{\alpha}(|u_{\ve_k}|^p, |u_{\ve_k}|^p)\rightarrow \sigma_*  .$$
	In particular, 
	$(\varepsilon_k^{\nu}\|\nabla u_{\varepsilon_k}\|^{2}_2)_{k}$, $\left(\D_{\alpha}(|u_{\ve_k}|^p, |u_{\ve_k}|^p)\right)_{k}$ are bounded sequences and the same holds for $\left(\|u_{\varepsilon_k}\|_2\right)_{k}$ and $\left(\|u_{\varepsilon_k}\|_{q}\right)_{k}$.
	Therefore, by combining the equation
	\begin{equation*}
		\begin{split}
			0 & =\tfrac{N}{2}t^N_{\varepsilon_k}\|u_{\varepsilon_k}\|^{2}_2+\tfrac{N}{q}t^N_{\varepsilon_k}\|u_{\varepsilon_k}\|^{q}_q-\tfrac{N+\alpha}{2p}t^{N+\alpha}_{\varepsilon_k} \D_{\alpha}(|u_{\ve_k}|^p, |u_{\ve_k}|^p)\\
			& =-\tfrac{N-2}{2}\varepsilon^{\nu}\|\nabla u_{\varepsilon_k} \|^{2}_2+N(t^{N}_{\varepsilon_k}-1)\left(\frac{\|u_{\varepsilon_k}\|^{2}_2}{2}+\frac{\|u_{\varepsilon_k}\|^{q}_q}{q}\right)-\tfrac{(N+\alpha)}{2p}(t^{N+\alpha}_{\varepsilon_k}-1)\D_{\alpha}(|u_{\ve_k}|^p, |u_{\ve_k}|^p),
		\end{split}
	\end{equation*}
	with boundedness of the above sequences and $t_{\varepsilon_k}\rightarrow 1$, we obtain 
	$\lim_{k\to +\infty}\varepsilon_k^{\nu}\|\nabla u_{\varepsilon_k}\|^{2}_2=0$.
\end{proof}

\begin{proof}[Proof of Theorem \ref{t-TF-0}]
	By Lemma \ref{lemma_6_1} and Corollary \ref{gradient_0}, there exists a sequence $(\ve_k)_{k}$ such that $(u_{\ve_k}(x/t_{\ve_k}))\subset\mathscr P$ is a bounded
  radially nonincreasing minimizing sequence for  the functional $E$. Then, if we set $v_{\ve_k}(x):=u_{\ve_k}(x/t_{\ve_k})$, by arguing as in the proof of Lemma \ref{t-GN-existence}, there exists $\bar{v}\in L^2\cap L^q(\R^N)$ such that 
  \begin{equation*}
  \begin{split}
  &v_{\ve_k}\to \bar{v}\quad \text{in}\ L^s(\R^N),\ \forall s\in (2,q),\\
  &v_{\ve_k}\to \bar{v}\quad \text{a.e.}\  \text{in}\ \R^N.
  \end{split}
  \end{equation*}
We claim that $\bar{v}\in \mathscr{P}$. Indeed, assume by contradiction that $\mathcal{P}(\bar{v})\neq 0$. Since $(v_{\ve_k})_k$ is a minimizing sequence for $\sigma_*$, by the nonlocal Brezis--Lieb Lemma \cite{MMS16}*{Proposition 4.7} we derive 
\begin{equation}\label{nonlocal_24}
	\D_{\alpha}(v^p_{\ve_k}, v^p_{\ve_k})\to  \D_{\alpha}(\bar{v}^p, \bar{v}^p)=\frac{2Np}{\alpha}\sigma_*,
\end{equation}
and by the weak lower semi-continuity of the norms we have $\mathcal{P}(\bar{v})<0$. Furthermore, it is easy to see that there exists $t_0\in (0,1)$ such that $\bar{v}(x/t_0)\in \mathscr{P}$. However this implies that 
\begin{equation*}
\begin{split}
	\sigma_*\le E(\bar{v}(x/t_0))=\frac{\alpha}{2Np}\D_{\alpha}(\bar{v}(x/t_0)^p,\bar{v}(x/t_0)^p)=\frac{t^{N+\alpha}_0 \alpha}{2Np}\D_{\alpha}(\bar{v}^p,\bar{v}^p)<\frac{\alpha}{2Np}\D_{\alpha}(\bar{v}^p,\bar{v}^p)=\sigma_*,
\end{split}
\end{equation*}
that is a contradiction. Hence $\bar{v}\in \mathscr{P}$. 

Consequently, combining the standard Brezis--Lieb lemma with \eqref{nonlocal_24} yields
\begin{equation*}
	\sigma_*=\lim_{k\to +\infty}E(v_{\ve_k})=E(\bar{v})+\lim_{k\to +\infty}\left(\frac{\|v_{\ve_k}-\bar{v}\|^2_2}{2}+\frac{\|v_{\ve_k}-\bar{v}\|^q_q}{q}\right)\ge \sigma_*.
\end{equation*}
This proves that $E(\bar{v})=\sigma_*$ and $(v_{\ve_k})_k$ converges to $\bar{v}$ in $L^2(\R^N)\cap L^q(\R^N)$, i.e. $\bar{v}$ is a ground state solution of \eqref{eqTF}. Finally, from $t_{\ve_k}\to 1$  we further conclude  that $(u_{\ve_k})_k$ converges to $\bar{v}$ as well, and $\bar{v}$ solves \eqref{eqTF}.
\end{proof}


\section{Numerical approximations of the optimisers}
In this section numerical approximations of the optimizers will be briefly
discussed, complementing the theoretical results presented above. In general 
it is difficult to find solutions to nonlinear equations directly, while ground states as the limits of associated evolution equations are much easier to deal with. 
For instance, one might look for ground states to \eqref{eqTF}  by looking 
at solutions of  the parabolic flow
\begin{equation*}
		u_t = (I_\alpha*|u|^p)|u|^{p-2}u - u - |u|^{q-2}u,
\end{equation*}
or gradient (ascent) flow associated with the Rayleigh quotient $\mathcal{R}_\alpha(u)$.
However, the corresponding solutions usually become singular (concentrating at one point)
or zero (and spreading on the whole space). Instead, we consider the alternative
equation 
\begin{equation}	\label{eqTFt}
	u_t = \lambda(t) (I_\alpha*|u|^p)|u|^{p-2}u - u - |u|^{q-2}u,
\end{equation}
where the weight $\lambda(t)$ is chosen to make sure that the total
interaction energy $\mathcal{D}_\alpha(|u|^p,|u|^p)$ is conserved. In other words, 
the condition that $0=\frac{d}{dt}\mathcal{D}_\alpha(|u|^p,|u|^p)$ implies 
\[
\lambda(t) = \left[
\int (I_\alpha * |u|^p)^2|u|^{2p-2}dx
\right]^{-1}\int I_\alpha *|u|^p(|u|^p+|u|^{p+q-2})dx.
\]
Therefore, with fixed (and conserved) $\mathcal{D}_\alpha(|u|^p,|u|^p)$, at $t$ goes to infinity, 
the corresponding solution becomes stationary and $\lambda(t)$ converges to $\lambda(\infty)$.
This stationary solution can then be normalized to a solution of~\eqref{eqTF}.

As usual, the main computational bottleneck in solving Eq.~\eqref{eqTFt} lies in the evaluation of the Riesz potential 
$I_\alpha *\rho$ for some function $\rho$. When $\rho(x)=\rho(|x|)$ is assumed to be radially symmetric, so is $I_\alpha*\rho(x)$,  
and  
\begin{multline*}
	I_\alpha*\rho(|x|) = A_\alpha \int_{\mathbb{R}^N} |x-y|^{\alpha-N}\rho(y)dy \cr
	=A_\alpha (N-1)\omega_{N-1}\int_0^\infty s^{N-1}\rho(s)\int_0^{\pi}
	(r^2+s^2-2rs\cos\theta)^{(\alpha-N)/2}\sin^{N-2}\theta d\theta ds,
\end{multline*}
where $r=|x|$ and $\omega_{N}$ is the surface area of the unit ball in $\mathbb{R}^N$. In terms of the
Gauss hypergeometric function, the previous double integral 
can be simplified, so that   $I_\alpha*\rho(x)$ becomes
\begin{equation}\label{eq:numquad}
\frac{2^{1-\alpha}\Gamma(\frac{N-\alpha}{2})}{\Gamma(\frac{\alpha}{2})\Gamma(\frac{N}{2})}
\int_0^\infty s^{N-1}\rho(s)
(r^2+s^2)^{(\alpha-M)/2}
{}_2F_1
\left(
\frac{N-\alpha}{4},\frac{N-\alpha}{4}+\frac{1}{2};\frac{N}{2};\frac{4r^2s^2}{(r^2+s^2)^2}
\right)ds.  
\end{equation}
At any $r=|x|$, the evaluation of this Riesz potential is essentially reduced to a numerical
quadrature, although the integral in ~\eqref{eq:numquad} becomes singular around $r=s$ for $\alpha \in (0, 1]$ (but still integrable for 
smooth function $\rho$). In three dimension,  
the Riesz potential can be simplified using the fact that
\[
\int_0^{\pi} (r^2+s^2-2rs\cos\theta)^{(\alpha-3)/2}\sin\theta d\theta =
\begin{cases}
	\frac{(r+s)^{\alpha-1}-|r-s|^{\alpha-1}}{rs(\alpha-1)},\qquad & \alpha \neq 1,\cr
	\frac{\ln (r+s)-\ln |r-s|}{rs},\quad &\alpha = 1.
\end{cases}
\]
All the examples shown in this paper are in one or three dimensions only, while the computation
in general dimension using~\eqref{eq:numquad} is less accurate, because of the double integral involved and 
the evaluation of ${}_2F_1$ in the integrand. 

If the optimizer is supported on a finite domain (i.e., $p\geq 2$), 
the numerical quadrature can be performed using standard techniques. When the computational domain is large enough, 
the support of the solution will be selected automatically by the evolution equation~\eqref{eq:numquad}.
When the optimizer is supported on the whole space  (i.e., $p < 2$), 
the solution is expressed on a non-uniform mesh, and the expected
algebraic decay rate  $O(|x|^{-(N-\alpha)/(2-p)})$ 
of the optimizers proved in Corollary~\ref{cor_12} can be used 
as a boundary condition at infinity, to improve the accuracy of the integral in~
\eqref{eq:numquad} by taking into the contribution for $s \in [L,\infty)$ with a computational domain $[0,L]$.

The optimizer for $p=4>2, \alpha = 2.5$ and $q=8$ in three dimension is shown 
in Fig.~\ref{fig:pgt2}, which is discontinuous on the boundary of the support.

\begin{figure}[htp]
\includegraphics[totalheight=0.24\textheight]{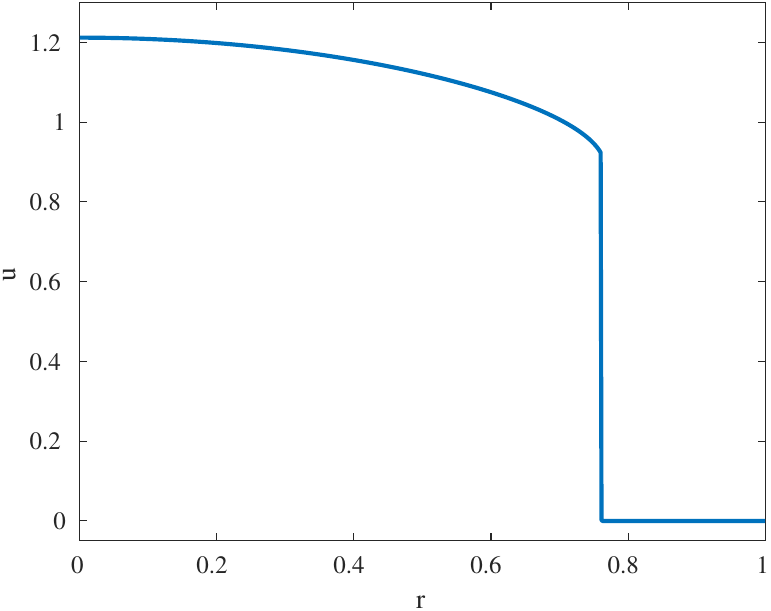}
\caption{The optimiser for $p=4, \alpha = 2.5$ and $q=8$ in three dimension,
which is expected to be compactly supported.}
  \label{fig:pgt2}
\end{figure}

For $p=1.5$, $\alpha = 1$ and $q=2.5$ in three dimension, the optimiser is plotted in Fig~\ref{fig:pls2}. This solution coincides with the explicit family with parameters specified in Eq.~\eqref{eq:explictpq}, with the 
expected algebraic decay rate $O\big(|x|^{-(N-\alpha)/(2-p)}\big)$.
\begin{figure}[htp]
	\begin{center}
		\includegraphics[totalheight=0.24\textheight]{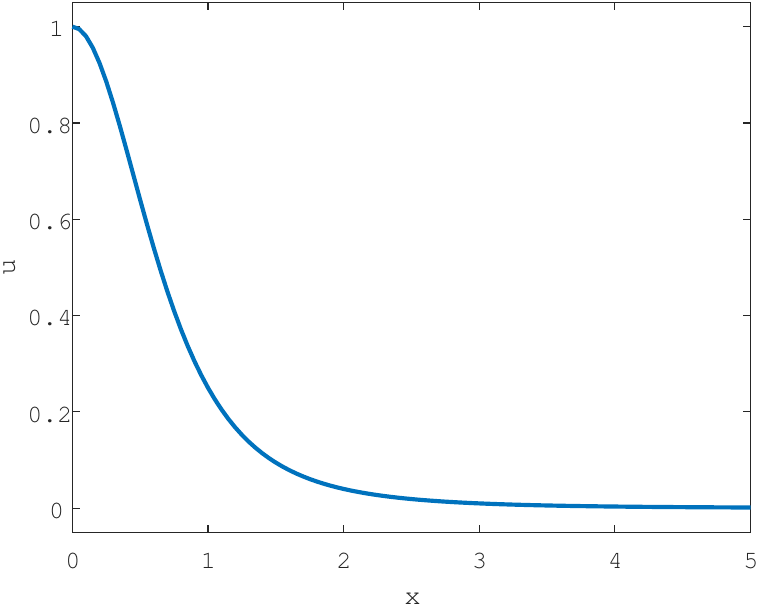}$~~$
		\includegraphics[totalheight=0.24\textheight]{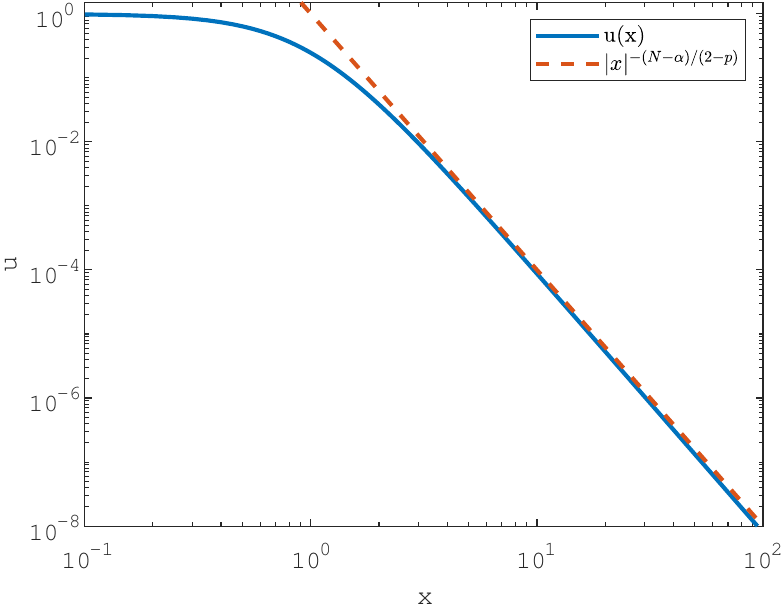}
	\end{center}
\caption{The ground state or optimiser for $p=1.5<2, \alpha=1$ and $q=2.5$ in 
three dimension, bot near the origin or away from the origin. }
  \label{fig:pls2}
\end{figure}

\section{Conclusion and open questions}
In this paper, a special class of Thomas--Fermi type problem was studied, that appears as a limit regime for the Choquard equations with local repulsion. The associated governing equation $(TF)$ was shown to have ground state solutions that correspond to sharp optimizers of a quotient involving the interaction energy and classical Banach norms. Regularity properties and other qualitative information of solutions to this governing equations were investigated. The convergence of ground states of the original Choquard equation in the relevant regimes to a ground state of Thomas--Fermi equation was also proved. 
\smallskip

Below we list several open questions related to the results in the present work.

\subsection{\textmd{\it Uniqueness}}
The uniqueness of the ground state of $(TF)$ beyond the case $p=2$ and $\alpha\le 2$ studied in  \citelist{\cite{Flucher}\cite{Carrillo-Sugiyama}\cite{Volzone}\cite{Carrillo-NA-2021}} seems to be open at present. If the uniqueness in the case $p<2$ is known this would imply that \eqref{eq-Keller-S} provides a sharp constant rather than an upper bound, see Remark \ref{r-sharp}.

\subsection{\textmd{\it Sharp regularity}}
Numerical experiments suggests that for $p>2$ the jump near the boundary $\lambda$ is always strictly larger than $\lambda_*=(\frac{p-2}{q-p})^{1/(q-2)}$, see Remark \ref{r-lambda}. If established analytically, this would also 
rule out the second option in the regularity estimate \eqref{order_hold} implying that H\"older regularity of the ground state near the boundary of the support is always of order $\tau\in (0,\min\left\{\alpha,1\right\})$.

\subsection{\textmd{\it Limit profiles as $\alpha\to 0$ and $\alpha\to N$}}
The fact that the (normalised) sharp constant $\C_{N,\alpha,p,q}$ approaches $1$ in the two limiting cases as $\alpha$ approaches $0$ or $N$, see \eqref{eq-0N}, suggests that after a rescaling ground states of $(TF)$ converge to characteristic functions over a ball. However, numerical experiments indicate that the limits become singular because ground states becomes degenerate or singular. It would be interesting to investigate (singular) limits of the ground states of $(TF)$ as $\alpha\to 0$ and $\alpha\to N$ analytically. 


\appendix

\section{Estimate on $\mathscr{C}_{N,\alpha,p,q}$ for 
  $p=\frac{N+\alpha+2}{N+1}$ and 
  $q = \frac{2(N+2)}{N+1}$
}
\label{app:int}
Here we evaluate the estimate \eqref{C-estimate} on $\mathscr{C}_{N,\alpha,p,q}$ associated with the one-parameter solutions of $(TF)$ discussed in
Remark~\ref{rem:exact}. First using the surface area $2\pi^{N/2}/\Gamma(N/2)$ of the unit sphere in $\R^N$, we have 
for $d>N/2$ the following special integral,
\[
  \int_{\R^N} (1 +|x|^2)^{-d} dx 
  = \frac{2\pi^{N/2}}{\Gamma(N/2)}\int_0^\infty 
r^{N-1}(1+r^2)^{-d}dr
= \frac{\pi^{N/2}\Gamma(d-N/2)}{\Gamma(d)}.
\]
This implies that for the solution $v(x)=(1+|x|^2)^{-(N+2)/2}$ with 
$p=(N+\alpha+2)/(N+1)$ and 
$q = 2(N+2)/(N+1)$,
\[
  \|v\|_2^2 = \frac{\pi^{N/2}\Gamma(N/2+1)}{\Gamma(N+1)},\qquad 
  \|v\|_q^q = \frac{\pi^{N/2}\Gamma(N/2+2)}{\Gamma(N+2)}.
\]
The Riesz potential $I_\alpha *v^p$ can also be evaluated~\cite{Dyda,YanghongPME} as 
\begin{align*}
  I_\alpha *v^p(x) &= 
  2^{-\alpha} \frac{\Gamma(N/2+1)\Gamma((N-\alpha)/2)}{\Gamma((N+\alpha+2)/2)\Gamma(N/2)}
  {}_2F_1\left(\frac{N}{2}+1,\frac{N-\alpha}{2};\frac{N}{2};-|x|^2\right)\cr
  &=
  2^{-1-\alpha}\frac{\Gamma((N-\alpha)/2)}{\Gamma((N+\alpha+2)/2)}
  (1+|x|^2)^{-1-N/2+\alpha/2}(N+\alpha|x|^2),
\end{align*}
and consequently the interaction energy is
\begin{align*}
  \mathcal{D}_\alpha(v^p,v^p)
  &=
  2^{-1-\alpha}\frac{\Gamma((N-\alpha)/2)}{\Gamma((N+\alpha+2)/2)}
  \int_{\mathbb{R}^N} (1+|x|^2)^{-N-2}(N+\alpha|x|^2)dx \cr
  &=\pi^{N/2}\frac{N(N+\alpha+2)}{2^{\alpha+2}} \frac{\Gamma((N-\alpha)/2)\Gamma(N/2+1)}
  {\Gamma((N+\alpha)/2+1)\Gamma(N+2)}.
\end{align*}
Putting all these together, with $\theta = \frac{2\alpha(N+1)}{N(N+\alpha+2)}$, we obtain
\begin{multline*}
  \mathscr{C}_{N,\alpha,p,q}\ge\mathcal{R}_\alpha(v)=\frac{\mathcal{D}_\alpha(v^p,v^p)}{ \|v\|_2^{2p\theta}\|v\|_{q}^{2p(1-\theta)} }\\
  =
  \frac{N(N+\alpha+2)}{\pi^{\alpha/2}2^{\alpha+1}(N+2)} \frac{\Gamma((N-\alpha)/2)}{\Gamma((N+\alpha)/2+1)}
  \left(
    \frac{N+2}{2(N+1)}\frac{\Gamma(N+1)}{\Gamma(N/2+1)}
  \right)^{\alpha/N},
\end{multline*}
which establishes \eqref{C-estimate}.

\section{A Calculus lemma}
Here we prove a technical calculus lemma that was used in the proof of Lemma \ref{lemma_6_1}.

\begin{lemma}\label{tec_lemm}
	For every $\nu<0$, and for every continuous and strictly monotone increasing function $f:\mathbb{R}_{+}\rightarrow \mathbb{R}_{+}$ such that $f(0)=0$, $\lim_{\varepsilon\to +\infty}f(\varepsilon)=+\infty$, there exists a continuous function $g:\mathbb{R}_{+}\rightarrow \mathbb{R}_{+}$
	such that 
	\begin{equation}\label{g_1}
		\lim_{\varepsilon\to +\infty}g(\varepsilon)=+\infty,
	\end{equation}
	\begin{equation}\label{g_2} \lim_{\eps\to +\infty}\varepsilon^{\nu}f(g(\varepsilon))=0,
	\end{equation}
	\begin{equation}\label{g_3}
		\lim_{\varepsilon\to +\infty}g(\varepsilon)\varepsilon^{\nu}f(g(\varepsilon))=+\infty.
	\end{equation}
	
\end{lemma}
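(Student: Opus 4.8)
The idea is to construct $g$ explicitly by inverting a carefully chosen auxiliary relation. Since $\nu<0$, the quantity $\eps^\nu$ decays to $0$ as $\eps\to+\infty$; the three requirements say roughly that $f(g(\eps))$ must grow slower than $\eps^{-\nu}$ (so that the product in \eqref{g_2} vanishes), but not too much slower: $g(\eps)f(g(\eps))$ must outpace $\eps^{-\nu}$ (so that the product in \eqref{g_3} diverges). The natural way to balance these is to demand
\begin{equation*}
\eps^\nu f(g(\eps)) = \frac{1}{\sqrt{g(\eps)}},
\end{equation*}
which, if $g(\eps)\to+\infty$, immediately gives \eqref{g_2}, while the left-hand side of \eqref{g_3} equals $\sqrt{g(\eps)}\to+\infty$, giving \eqref{g_3}.

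\textbf{Construction.} Rearranging the displayed identity, $g$ should satisfy
\begin{equation*}
\sqrt{g(\eps)}\, f(g(\eps)) = \eps^{-\nu}.
\end{equation*}
Define $h:\R_+\to\R_+$ by $h(t):=\sqrt{t}\,f(t)$. Since $f$ is continuous, strictly increasing, $f(0)=0$ and $f(t)\to+\infty$, the function $h$ is continuous, strictly increasing, $h(0)=0$ and $h(t)\to+\infty$ as $t\to+\infty$; hence $h$ is a homeomorphism of $\R_+$ onto $\R_+$ and admits a continuous, strictly increasing inverse $h^{-1}:\R_+\to\R_+$ with $h^{-1}(s)\to+\infty$ as $s\to+\infty$. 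Now set
\begin{equation*}
g(\eps):=h^{-1}\bigl(\eps^{-\nu}\bigr).
\end{equation*}
Since $\nu<0$, $\eps\mapsto\eps^{-\nu}$ is continuous on $\R_+$ and tends to $+\infty$ as $\eps\to+\infty$, so $g$ is continuous and $g(\eps)\to+\infty$, proving \eqref{g_1}. By definition of $g$ and $h$, we have $\sqrt{g(\eps)}\,f(g(\eps))=\eps^{-\nu}$, i.e.
\begin{equation*}
\eps^{\nu}f(g(\eps))=\frac{1}{\sqrt{g(\eps)}}\longrightarrow 0\quad\text{as }\eps\to+\infty,
\end{equation*}
which is \eqref{g_2}; and multiplying by $g(\eps)$,
\begin{equation*}
g(\eps)\,\eps^{\nu}f(g(\eps))=\sqrt{g(\eps)}\longrightarrow+\infty\quad\text{as }\eps\to+\infty,
\end{equation*}
which is \eqref{g_3}. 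This completes the construction.

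\textbf{Main obstacle.} There is essentially no analytic obstacle here; the only point requiring a little care is to pick the ``balancing'' exponent so that \eqref{g_2} and \eqref{g_3} hold simultaneously — the choice $\sqrt{g(\eps)}$ works because any power $g(\eps)^{\beta}$ with $0<\beta<1$ would do, but one must avoid $\beta=0$ (then \eqref{g_2} fails) and $\beta=1$ (then \eqref{g_3} fails). Beyond that, one just needs the elementary fact that a continuous strictly increasing surjection of $\R_+$ onto itself has a continuous inverse with the same surjectivity, which is standard.
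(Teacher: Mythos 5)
Your proof is correct, and it is genuinely simpler than the one given in the paper. You pose the balance directly as an equation, $\sqrt{g(\eps)}\,f(g(\eps))=\eps^{-\nu}$, observe that $h(t):=\sqrt{t}\,f(t)$ is a continuous strictly increasing bijection of $\R_+$ onto $\R_+$ (so $h^{-1}$ exists with the stated properties), and set $g:=h^{-1}(\eps^{-\nu})$; the three conditions then drop out immediately as identities. The paper instead builds $g$ in two steps: it first defines an auxiliary slowly growing function $H(\eps)=\min\bigl\{\log\eps,\,\sqrt{f^{-1}(\eps^{-\nu}/\log\eps)}\bigr\}$, then sets $g(\eps)=f^{-1}\bigl(\eps^{-\nu}/H(\eps)\bigr)$ and verifies the three limits using the two branches of the $\min$: the $\log$ branch controls \eqref{g_1}, and the square-root branch supplies the lower bound $g(\eps)\eps^\nu f(g(\eps))\geq\sqrt{f^{-1}(\eps^{-\nu}/\log\eps)}\to+\infty$ needed for \eqref{g_3}. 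Functionally the two constructions exploit the same idea (introducing a square root to split the difference between \eqref{g_2} and \eqref{g_3}), but yours collapses it into a single inversion of a monotone function and avoids the $\log$ and the $\min$ entirely, so it is shorter and requires fewer monotonicity checks. As you note, any power $g(\eps)^\beta$ with $0<\beta<1$ in place of $\sqrt{g(\eps)}$ would work equally well.
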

\begin{proof}
	Let $H$ be the function defined by 
	\begin{equation}\label{H_small}
		H(\varepsilon)=\min\left\{\log(\varepsilon), \sqrt{f^{-1}\left(\frac{\varepsilon^{-\nu}}{\log(\varepsilon)}\right)}\right\}, \quad \text{for}\ \ve>e^{-\frac{1}{\nu}}, 
	\end{equation}
where $f^{-1}:\R_+\to\R_+$ is the inverse of $f$. Clearly, 	
\begin{equation}\label{H_zero}
	\lim_{\varepsilon\to +\infty}H(\varepsilon)=+\infty.
\end{equation}
Note that such $H$ is continuous and monotone increasing since both  $\sqrt{f^{-1}\big(\frac{\varepsilon^{-\nu}}{\log(\varepsilon)}\big)}$ and $\log(\varepsilon)$ are continuous and monotone increasing functions for $\ve>e^{-\frac{1}{\nu}}$.
	Hence, we define $g$ as follows:
	\begin{equation}\label{g_def} g(\varepsilon):=f^{-1}\left(\frac{\varepsilon^{-\nu}}{H(\varepsilon)}\right)\quad \text{for}\ \ve>e^{-\frac{1}{\nu}},
	\end{equation}
and we extend to a continuous nonnegative function defined on $\R_{+}$.
Note that from \eqref{H_small}, monotonicity and unboundedness of $f^{-1}$,  we have
	\begin{equation*}
		g(\varepsilon)\ge f^{-1}\left(\frac{\varepsilon^{-\nu}}{\log(\varepsilon)}\right)\longrightarrow +\infty\quad \text{as}\ \ve\rightarrow +\infty.
	\end{equation*}
	Hence,  \eqref{g_1} holds.
	Furthermore, from \eqref{H_zero} we obtain
	$$\lim_{\varepsilon\to +\infty}\varepsilon^{\nu}f(g(\varepsilon))=\lim_{\varepsilon\to +\infty}\frac{1}{H(\varepsilon)}=0,$$
	which proves \eqref{g_2}. 
	Finally, again from \eqref{H_small} and \eqref{g_def} it holds
	\begin{equation*}
		\begin{split}
			g(\varepsilon)\varepsilon^{\nu}f(g(\varepsilon))& =\frac{1}{H(\ve)}\cdot f^{-1}\left(\frac{\varepsilon^{-\nu}}{H(\varepsilon)}\right)\ge  \left(f^{-1}\left(\frac{\varepsilon^{-\nu}}{\log(\varepsilon)}\right)\right)^{-\frac{1}{2}} f^{-1}\left(\frac{\varepsilon^{-\nu}}{H(\varepsilon)}\right)\\
			& \ge \sqrt{f^{-1}\left(\frac{\varepsilon^{-\nu}}{\log(\varepsilon)}\right)}\longrightarrow +\infty, \quad \ \text{as}\ \ve\to +\infty,
		\end{split}
	\end{equation*}
	which proves \eqref{g_3}. 
\end{proof}

\vspace{5pt}
{\small
	\noindent{\bf Acknowledgements.}
	D.G.'s research was funded by the EPSRC Maths DTP 2020 Swansea University (EP/V519996/1).
	Y.H. would like to thank V.M. for introducing the problem and the hospitality of Swansea University.
	Z.L. was supported by the National Natural Science Foundation of China (Grant No.12171470).
    This work was initiated during a visit of Z.L.\ at Swansea University and V.M. at Suzhou University of Science and Technology. Hospitality of both institutions is gratefully acknowledged.	
}

\vspace{5pt}

{\small
\noindent{\bf Data availability statement.}
Data sharing not applicable to this article as no datasets were generated or analysed during the current study.}

\bibliographystyle{plain} 
\bibliography{TF}

\end{document}